\numberwithin{equation}{section}
\def\V{\mathfrak{V}}
\def\ve{\varepsilon}
\def\N{\mathbb{N}}
\def\ve{\varepsilon}
\def\R{\mathbb{R}}
\def\cT{\mathcal{T}}
\def\cN{\mathcal{N}}
\def\Z{\mathcal{Z}}
\def\Z{\mathcal{Z}}
\def\L{L}
\def\H{\mathcal{H}}
\def\T{\mathcal{T}}
\def\<{\big\langle}
\def\>{\big\rangle}
\def\diiv{\operatorname{div}}
\def\dist{\operatorname{dist}}
\def\uin{u^{\rm in}}
\def\dx{\,{\rm d}x}
\def\pp{\partial}
\def\loc{{\rm loc}}
\definecolor{red}{rgb}{0.9, 0, 0}
\newtheorem{Theorem}{Theorem}[section]
\newtheorem{Proposition}[Theorem]{Proposition}
\newtheorem{Lemma}[Theorem]{Lemma}
\newtheorem{Corollary}[Theorem]{Corollary}
\newtheorem{Remark}[Theorem]{Remark}
\begin{document}
\title{Polyharmonic homogenization, rough polyharmonic splines and sparse super-localization.}

\date{\today}

\author{Houman Owhadi\footnote{Corresponding author. California Institute of Technology, Computing \& Mathematical Sciences , MC 9-94 Pasadena, CA 91125, owhadi@caltech.edu}, \quad  Lei Zhang\footnote{Shanghai Jiaotong University,
Institute of Natural Sciences and Department of Mathematics, Key Laboratory of Scientific and Engineering Computing (Shanghai Jiao Tong University ), Ministry of Education, 800 Dongchuan Road, 200240, Shanghai, China, lzhang2012@sjtu.edu.cn}, \quad Leonid Berlyand \footnote{Pennsylvania State University, Department of Mathematics, University Park, PA, 16802, berlyand@math.psu.edu}}

\maketitle

\begin{abstract}
We introduce a new variational method for the numerical homogenization of divergence form elliptic, parabolic and hyperbolic
  equations with arbitrary rough ($L^\infty$) coefficients.  Our method does not rely on concepts of ergodicity or scale-separation but on compactness properties of the solution space and a new variational approach to homogenization.
The approximation space is generated by an interpolation basis (over scattered points forming a mesh of resolution $H$) minimizing the $L^2$ norm of the source terms; its (pre-)computation involves minimizing $\mathcal{O}(H^{-d})$ quadratic (cell) problems on (super-)localized sub-domains of size $\mathcal{O}(H \ln (1/ H))$. The resulting localized linear systems remain sparse and banded. The resulting interpolation basis functions are biharmonic for $d\leq 3$, and polyharmonic for $d\geq 4$, for the operator $-\diiv(a\nabla \cdot)$ and can be seen as a generalization of polyharmonic splines to differential operators with arbitrary rough coefficients. The accuracy of the method  ($\mathcal{O}(H)$ in energy norm and independent from aspect ratios of the mesh formed by the scattered points) is established via the introduction of a new class of higher-order Poincar\'{e} inequalities. The method bypasses  (pre-)computations on the full domain and naturally generalizes to time dependent problems, it also provides a natural solution to the inverse problem of recovering the solution of a divergence form elliptic equation from a finite number of point measurements.
\end{abstract}

\tableofcontents

\section{Introduction}\label{sec:intro}
Consider the partial differential equation
\begin{equation}\label{eqn:scalar}
\begin{cases}
    -\diiv \Big(a(x)  \nabla u(x)\Big)=g(x) \quad  x \in \Omega;\, g \in L^2(\Omega), \;   a(x)=\{a_{ij} \in L^{\infty}(\Omega)\}\\
    u=0 \quad \text{on}\quad \partial \Omega,
    \end{cases}
\end{equation}
where $\Omega$ is a bounded subset of $\R^d$ with piecewise Lipschitz boundary  which satisfies an interior cone condition
and $a$ is symmetric, uniformly elliptic on $\Omega$ and with entries in $L^\infty(\Omega)$. It follows
that the eigenvalues of $a$ are uniformly bounded from below and
above by two strictly positive constants, denoted by $\lambda_{\min}(a)$ and
$\lambda_{\max}(a)$.  More precisely, for all $\xi \in \R^d$ and almost all $x\in \Omega$,
\begin{equation}\label{skjdhskhdkh3e}
\lambda_{\min}(a)|\xi|^2 \leq \xi^T a(x) \xi \leq \lambda_{\max}(a)
|\xi|^2.
\end{equation}
In this paper, as in  \cite{OwZh:2007a, BeOw:2010, OwZh:2011}, we are interested in the homogenization of \eqref{eqn:scalar} (and its parabolic and hyperbolic analogues in Section \ref{sec:timedependent}), but not in the classical sense, i.e., that  of asymptotic analysis \cite{BeLiPa78} or that of $G$ or $H$-convergence \cite{Mur78,Spagnolo:1968,Gio75} in which one considers a sequence of operators $-\diiv(a_\epsilon \nabla \cdot)$ and seeks to characterize the limit of solutions. We are interested in the homogenization of \eqref{eqn:scalar} in the sense of ``numerical homogenization,'' i.e., that of the approximation of the solution space of \eqref{eqn:scalar}  by a finite-dimensional space.
This approximation is not based on concepts of scale separation and/or of ergodicity but on strong compactness properties, i.e., the fact that the unit ball of the solution space is strongly compactly embedded into $\H^1_0(\Omega)$ if source terms ($g$) are in $L^2$ (see \cite[Appendix B]{ OwZh:2011}).

In other words if \eqref{eqn:scalar} is ``excited'' by  source terms $g\in L^2$ then the solution space can be approximated by a finite dimensional sub-space (of $\H^1_0(\Omega)$) and the question becomes ``how to approximate'' this solution space.
The approximate solution space introduced in this paper is constructed as follows (see Section \ref{sec:varforminterbas}):\\
 (1) Select a finite subset of points  $(x_i)_{i\in \cN}$ in $\Omega$ with $\cN:=\{1,\ldots,N\}$.\\
  (2) Identify the nodal interpolation basis $(\phi_i)_{i \in \cN}$ as minimizers of  $\int_{\Omega}\big|\diiv(a\nabla \phi)\big|^2$ subject to $\phi_i(x_j)=\delta_{i,j}$ (where $\delta_{i,j}$ is the Kronecker delta
  defined by $\delta_{i,i}=1$ and $\delta_{i,j}=0$ for $i\not=j$).\\ (3) Identify the approximate solution space as the interpolation space generated by the elements $\phi_i$.

  Note that we construct here  an  interpolation basis with $N$ elements,  which interpolates a given function  using its values at  the nodes $(x_i)_{i\in \cN}$ (analogously to e.g.,  the classical interpolation polynomials of degree $N-1$).

Our motivation in identifying the interpolation elements through the minimization step (2) lies in observation that the origin of the compactness of the solution space lies in the higher integrability of $g$. In Section \ref{sec:varforminterbas} we will show that one of the properties of the interpolation basis $\phi_i$ is that $\sum_{i} w_i \phi_i$ minimizes $\int_{\Omega}\big|\diiv(a\nabla w)\big|^2$ over all functions $w$ interpolating the points $x_i$
(i.e. such that $w(x_i)=w_i$). The error of the interpolation and the accuracy of the resulting finite element method is established in Section \ref{sec:Poincare} through the introduction of a new class of higher  order Poincar\'{e} inequalities.

The accuracy of our method (as a function of the locations of the interpolation points) will depend only on the mesh norm of the discrete set $(x_i)_{i\in \cN}$, i.e., the constant $H$ defined by
\begin{equation}\label{def:H}
H:=\sup_{x\in \Omega} \min_{i\in \cN}|x-x_i|
\end{equation}
 Our method is meshless in its formulation and error analysis and the density of the interpolation points $(x_i)_{i\in \cN}$ can be increased near points of the domain  where high accuracy is required.
In the fully discrete formulation of our method (Section \ref{sec:numexp}), $a$ will be chosen to be piecewise constant on a tessellation $\cT_h$ (of $\Omega$) of resolution $h\ll H$ and the points $x_i$ will be a subset of interior nodes of $\cT_h$. More precisely, in numerical applications where $a$ is represented via constant values on a fine mesh $\cT_h$ of resolution $h$, $H$ will be the resolution of a coarse mesh $\cT_H$  having interior nodes $x_i$ (the nodes of $\cT_H$ will be nodes of $\cT_h$ but triangles of $\cT_H$ may not be unions of triangles of $\cT_h$) and the accuracy of our method will depend only on $H$ (i.e., the accuracy will be independent from the regularity of $\cT_H$ and the aspect ratios of its elements (tetrahedra for $d=3$, triangles for $d=2$)).

In Section \ref{subsec:inverseproblem} we show that our method also provides a natural solution to the (inverse) problem of recovering $u$ from the (partial) measurements of its (nodal) values at the sites $x_i$.
Although we restrict, in this paper, our attention to $d\leq 3$, we show, in Section \ref{subsec:gnerealizationdgeq4}, how our method can be generalized to
$d\geq 4$ by requiring that $g \in L^{2m}(\Omega)$ (for  $(d-1)/2 \leq  m \leq d/2$) and obtaining the elements $\phi_i$ as minimizers of $\int_{\Omega}\big|\diiv(a\nabla \phi)\big|^{2m}$. The resulting elements are $2m$-harmonic in the sense that they satisfy $\big(\diiv(a\nabla\cdot)\big)^{2m}\phi_i=0$ in $\Omega\setminus \{x_j\,:\, j\in \cN\}$.
In that sense our method can be seen as a polyharmonic formulation of homogenization and a generalization of polyharmonic splines \cite{Harder:1972, Duchon:1976} to PDEs with rough coefficients (see Section \ref{sec:rps}, the basis elements $\phi_i$ can be seen as ``rough polyharmonic splines'').
In Section \ref{sec:Localization} we show how the computation of the basis elements $\phi_i$ can be localized to sub-domains $\Omega_i \subset \Omega$ and obtain a posteriori error estimates. Those a posteriori error estimates show that if the localized elements $\phi_i^{\loc}$ decay at an exponential rate away from $x_i$ (which we observe in our numerical experiments in Section \ref{sec:numexp}) then the accuracy of the method remains $\mathcal{O}(H)$  in $\H^1$-norm
for sub-domains  $\Omega_i$ of size $\mathcal{O}\big(H\ln (1/H)\big)$, which we refer to as the {\it super-localization property}.  We defer the proof and statement of a priori error estimates to a sequel work.
We give a short review of numerical homogenization in Section \ref{sec:numhom} and we discuss  connections with classical homogenization in Subsection \ref{subsec:connectionsclasshom}.

  We discuss and compare the accuracy and computational cost of our method in Subsection \ref{subsec:compcost}.
  The linear systems to be solved for the identification of the (super-)localized basis $\phi_i^{\loc}$ are sparse and banded (by virtue of this property, the computational cost of our method is minimal).


\section{Variational formulation and properties of the interpolation basis.}\label{sec:varforminterbas}

\subsection{Identification of the interpolation basis.}\label{sec:formulation}

Let $V$ be the set of functions  $u\in \H^1_0(\Omega)$ such that $\diiv(a \nabla u) \in L^2(\Omega)$.
Observe that $V$ is the solution space of \eqref{eqn:scalar} for $g\in L^2(\Omega)$.
By \cite{St:1965, GiTr:1983}, solutions of \eqref{eqn:scalar} are H\"{o}lder continuous functions over $\Omega$ provided that their source terms $g$ belong to  $L^{d/2+\ve}(\Omega)$ for some $\ve>0$. It follows that if the dimension of the physical space is less than or equal to three ($d\leq 3$)
then elements of $V$ are H\"{o}lder continuous functions over $\Omega$. For the sake of simplicity, we will restrict our presentation to $d\leq 3$
and show in Section \ref{subsec:gnerealizationdgeq4} how the method and results of this paper can be generalized to $d\geq 4$.

Let $\|.\|_V$ be the norm on $V$ defined by
\begin{equation}
\|u\|_V:=\|\diiv(a \nabla u)\|_{L^{2}(\Omega)}.
\end{equation}
It is easy to check that $(V,\|.\|_V)$ is a complete linear space; in particular, it is closed under the norm $\|.\|_V$.

Let ($x_i$, $i=1,\dots,N$) be a finite subset of points in $\Omega$. Write
$\cN:=\{1,\ldots,N\}$
and define $H$ as in \eqref{def:H} (in practical applications, $H>0$ will be a parameter determined by available computing power and desired accuracy).
For each $i\in \{1,\dots,N\}$ define
\begin{equation}
V_i:=\{\phi\in V \mid \phi(x_i)=1 \text{ and } \phi(x_j)=0,  \text{ for } j\in \{1,\ldots, N\} \text{ such that } j\neq i\}
\end{equation}
and
consider
the following optimization problem over $V_i$:
\begin{equation}\label{eqn:convexopt}
\begin{cases}
\text{Minimize }  \int_{\Omega}\big|\diiv (a\nabla \phi)\big|^2\\
\text{Subject to }  \phi\in V_i.
\end{cases}
\end{equation}
We will now show that \eqref{eqn:convexopt} is a well posed (strictly convex) quadratic optimization problem and we will identify its unique minimizer $\phi_i$.
\begin{Remark}
Observe that $\phi_i$ depends only on $a$, $\Omega$ and the locations of the points $(x_i)_{i\in \cN}$.
Note that we do not require the distribution of those points be uniform or regular  (in particular, the density of points $x_i$ can be adapted to the structure of $a$  or increased in locations where higher accuracy is desired).
In particular, in  the construction of $\phi_i$, we do not  use  any tessellation, we just use the set of points $(x_i)_{i\in \cN}$.
\end{Remark}
Let $G(x,y)$ be the  Green's function of \eqref{eqn:scalar}. Recall that $G$ is symmetric and that it satisfies (for $y\in \Omega$)
\begin{equation}\label{eqn:scalarGreens}
\begin{cases}
    -\diiv \Big(a(x)  \nabla G(x,y)\Big)=\delta(x-y) \quad  &x \in \Omega;\\
    G(x,y)=0 \quad \text{for}\quad &x\in \partial \Omega,
    \end{cases}
\end{equation}
where $\delta(x-y)$ is the Dirac distribution centered at $y$. For  $i, j\in \{1,\ldots, N\}$ define
\begin{equation}\label{eq:theta:edf}
\Theta_{i,j}:=\int_{\Omega} G(x,x_i) G(x,x_j)\,dx.
\end{equation}
\begin{Lemma}\label{lem:Theta}
The integrals \eqref{eq:theta:edf} are finite, well defined and for all $i, j\in \{1,\ldots, N\}$, $\Theta_{i,j}$ is bounded by a constant
depending only on $\lambda_{\min}(a)$, $\lambda_{\max}(a)$ and $\Omega$.
The $N\times N$ matrix $\Theta$ is symmetric positive  definite. Furthermore for all $l\in \R^{N}$,
\begin{equation}
l^T \Theta l=\|v\|_{L^2(\Omega)}^2
\end{equation}
where $v$ is the solution of
\begin{equation}\label{eqn:scalarGreensTheta}
\begin{cases}
    -\diiv \Big(a(x)  \nabla v(x)\Big)=\sum_{j=1}^{N} l_j \delta(x-x_j)& \text{ for }  x \in \Omega;\\
   v(x)=0 & \text{ for }x\in \partial \Omega.
    \end{cases}
\end{equation}
\end{Lemma}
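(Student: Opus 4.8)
The strategy is to reduce the whole lemma to one analytic fact: for $d\le 3$, each Green's function $G(\cdot,x_i)$ lies in $L^2(\Omega)$, with $\|G(\cdot,x_i)\|_{L^2(\Omega)}$ bounded by a constant depending only on $\lambda_{\min}(a)$, $\lambda_{\max}(a)$ and $\Omega$. Granting this, everything else is bookkeeping. Finiteness of \eqref{eq:theta:edf} and the stated bound follow from Cauchy--Schwarz, $|\Theta_{i,j}|\le \|G(\cdot,x_i)\|_{L^2(\Omega)}\,\|G(\cdot,x_j)\|_{L^2(\Omega)}$; symmetry of $\Theta$ is immediate from the symmetry of $G$. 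For the quadratic form identity I would note that, by linearity of the PDE, the solution of \eqref{eqn:scalarGreensTheta} is $v=\sum_{j=1}^N l_j\,G(\cdot,x_j)$, which belongs to $L^2(\Omega)$ by the key fact; then expanding $\|v\|_{L^2(\Omega)}^2$ and using bilinearity of the integral gives $\|v\|_{L^2(\Omega)}^2=\sum_{i,j}l_i l_j \int_\Omega G(x,x_i)G(x,x_j)\dx=l^T\Theta l$. In particular $\Theta$ is positive semidefinite.

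\textbf{The key estimate.}
I would establish $G(\cdot,x_i)\in L^2(\Omega)$ by duality against the source term. For $f\in L^\infty(\Omega)$ let $w\in \H^1_0(\Omega)$ solve $-\diiv(a\nabla w)=f$. Since $d\le 3$ we have $f\in L^{d/2+\ve}(\Omega)$ for some $\ve>0$, so by the De Giorgi--Nash--Stampacchia $L^\infty$ bound (\cite{St:1965,GiTr:1983}) $w$ is continuous on $\overline\Omega$ with $\|w\|_{L^\infty(\Omega)}\le C\|f\|_{L^2(\Omega)}$, $C=C(\lambda_{\min}(a),\lambda_{\max}(a),\Omega)$; moreover the Green's representation formula gives $w(x_i)=\int_\Omega G(x,x_i)f(x)\dx$. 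I would then apply this with the bounded, nonnegative test functions $f_M:=G(\cdot,x_i)\wedge M$ (recall $G\ge 0$ by the maximum principle, and $G(\cdot,x_i)\in L^1(\Omega)$ is classical for divergence-form operators with $L^\infty$ coefficients, so $f_M$ is a legitimate choice): on one hand $\int_\Omega G(x,x_i)f_M(x)\dx\ge \int_\Omega f_M^2\dx=\|f_M\|_{L^2(\Omega)}^2$, and on the other hand the integral equals $w_M(x_i)\le\|w_M\|_{L^\infty(\Omega)}\le C\|f_M\|_{L^2(\Omega)}$, where $w_M$ solves the equation with right-hand side $f_M$. Hence $\|f_M\|_{L^2(\Omega)}\le C$ uniformly in $M$, and $M\to\infty$ with monotone convergence yields $\|G(\cdot,x_i)\|_{L^2(\Omega)}\le C$.

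\textbf{Positive definiteness.}
It remains to upgrade semidefiniteness. Suppose $l^T\Theta l=0$; then $\|v\|_{L^2(\Omega)}=0$, so $v=\sum_j l_j G(\cdot,x_j)=0$ a.e. on $\Omega$. Applying $-\diiv(a\nabla\cdot)$ in the sense of distributions gives $\sum_{j=1}^N l_j\,\delta(\cdot-x_j)=0$ in $\mathcal D'(\Omega)$; testing against a smooth function supported in a small neighbourhood of $x_k$ avoiding the other (distinct) points and equal to $1$ at $x_k$ forces $l_k=0$ for each $k$, i.e. $l=0$. Hence $\Theta$ is positive definite.

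\textbf{Expected obstacle.}
The only genuinely nontrivial step is the $L^2$ bound on $G(\cdot,x_i)$: one must marry the (standard but delicate) existence, nonnegativity and $L^1$-integrability of the Green's function for merely bounded measurable coefficients with the Stampacchia $L^\infty$ estimate, and it is exactly here that $d\le 3$ is used — it is what places $L^2(\Omega)$ inside $L^{d/2+\ve}(\Omega)$ and hence makes $w$ bounded. All the remaining assertions are then straightforward consequences of Cauchy--Schwarz, bilinearity of the integral, and the one-line distributional argument above.
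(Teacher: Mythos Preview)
Your proof is correct and the overall architecture matches the paper's: reduce everything to the single analytic fact $G(\cdot,x_i)\in L^2(\Omega)$ with a uniform bound, then read off finiteness, symmetry, the quadratic-form identity, and positive definiteness.

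The one genuine difference is \emph{how} you obtain the $L^2$ bound on the Green's function. The paper simply quotes pointwise estimates from the literature --- $G(x,y)\le K_d|x-y|^{2-d}$ for $d\ge 3$ from Gr\"uter--Widman, a logarithmic bound for $d=2$, and the trivial bound for $d=1$ --- and squares and integrates them directly. Your route is a duality/bootstrap argument: solve $-\diiv(a\nabla w)=f$, invoke the Stampacchia $L^\infty$ estimate $\|w\|_{L^\infty}\le C\|f\|_{L^2}$ (valid exactly because $2>d/2$ when $d\le 3$), combine it with the representation $w(x_i)=\int_\Omega G(\cdot,x_i)f$, and self-test with the truncations $f_M=G(\cdot,x_i)\wedge M$. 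This is a nice argument: it avoids citing the explicit pointwise Green's function bounds and makes transparent that the only place $d\le 3$ enters is through the $L^\infty$ estimate for solutions. The paper's approach, in exchange, is shorter once those pointwise bounds are accepted and does not need nonnegativity or $L^1$-integrability of $G$ as auxiliary inputs. For positive definiteness your distributional argument (apply $-\diiv(a\nabla\cdot)$ to $v=0$ and test against bump functions) is essentially a more explicit version of the paper's one-line observation that the PDE \eqref{eqn:scalarGreensTheta} has a nonzero solution whenever $l\neq 0$.
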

\begin{proof}
By Theorem 1.1. of \cite{GrWi:1982}, for $d\geq 3$, the Green's function is bounded by
\begin{equation}
G(x, y) \leq K_{d}|x-y|^{2-d}
\end{equation}
where $K_d$ depends only on $d,\lambda_{\min}(a)$ and $\lambda_{\max}(a)$. For $d=2$, we have (see for instance \cite{Taylor:2012} and references therein)
\begin{equation}
G(x, y) \leq K_2 \big(1+\ln \frac{\text{diam}(\Omega)}{|x-y|} \big)
\end{equation}
where $K_2$ depends only on $\lambda_{\min}(a)$, $\lambda_{\max}(a)$ and $\Omega$. For $d=1$, the Green function is trivially bounded by a constant
depending only on $\lambda_{\min}(a)$, $\lambda_{\max}(a)$ and $\Omega$. It follows from these observations that for $d=1,2,3$
\begin{equation}\label{eq:boundonGtheta}
\int_{\Omega} \big(G(x_i, y)\big)^2 \,dy\leq K
\end{equation}
where the constant $K$ is finite and depends only on $\lambda_{\min}(a)$, $\lambda_{\max}(a)$ and $\Omega$. We deduce that the integrals in \eqref{eq:theta:edf} (and hence $\Theta$) are well defined.

Let $l\in \R^{N}$, write
\begin{equation}
v(x):=\sum_{i=1}^N G(x,x_i) l_i
\end{equation}
Observe that $v$ is the solution of \eqref{eqn:scalarGreensTheta} and that $\|v\|_{L^2(\Omega)}^2=l^T \Theta l$. Then, it follows that $\Theta$ is symmetric positive definite. Indeed if $\Theta$ is not positive definite, then there would exist a non zero vector $l\in \R^N$ such that $\Theta l=0$. This would imply $\|v\|_{L^2(\Omega)}=0$ which is a contradiction since the equation $-\diiv \Big(a(x)  \nabla v(x)\Big)=\sum_{j=1}^{N} l_j \delta(x-x_j)$ has a non zero solution.
\end{proof}
Let $P$ be the $N\times N$ symmetric definite positive matrix defined as the inverse of $\Theta$, i.e.
\begin{equation}\label{eq:defP}
P:=\Theta^{-1}
\end{equation}
Note that Lemma \ref{lem:Theta} (i.e. the well-posedness and invertibility of $\Theta$) guarantees the existence of $P$.

Define
\begin{equation}\label{eq:detftau}
\tau(x,y):=\int_{\Omega}G(x,z)G(z,y)\,dz.
\end{equation}
Note that $\tau$ is symmetric and (see proof of Lemma \ref{lem:Theta}) that for each $y\in \Omega$, $x\rightarrow \tau(x,y)$ is a well defined element of $V$ (in particular, it is H\"{o}lder continuous on $\Omega$). Note also that for $i,j\in \cN$ we have $\tau(x_i,x_j)=\Theta_{i,j}$ and that $\tau$ is the fundamental solution of $\big(\diiv(a\nabla \cdot)\big)^2$ in the sense that for $y\in \Omega$
\begin{equation}\label{eq:odhuippokoh}
\begin{cases}
\diiv\Big(a\nabla \big(\diiv (a\nabla \tau(x,y))\big)\Big)=\delta(x-y) &\text{ for } x\in\Omega\\
\tau(x,y)=\diiv\big(a\nabla \tau(x,y)\big)=0 &\text{ for } x\in \partial \Omega.
\end{cases}
\end{equation}

\begin{Proposition}\label{prop:nonempty}
$V_i$ is a non-empty closed affine subspace of $V$.
Problem \eqref{eqn:convexopt} is a strictly convex optimization problem over $V_i$. The unique minimizer of \eqref{eqn:convexopt} is
\begin{equation}\label{eq:phiidef}
\phi_i(x):=\sum_{j=1}^{N} P_{i,j}\, \tau(x,x_j).
\end{equation}
\end{Proposition}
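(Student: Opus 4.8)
The plan is to verify three things in sequence: that $V_i$ is a non-empty closed affine subspace, that the functional is strictly convex on it, and that the claimed $\phi_i$ is the minimizer. For the first point, non-emptiness follows from the existence of at least one $\H^1_0(\Omega)$ function with $\diiv(a\nabla\cdot)\in L^2$ that takes prescribed values at the finitely many points $x_j$ — indeed the functions $\tau(\cdot,x_j)$ are such elements of $V$ (by the discussion preceding the Proposition they lie in $V$ and are Hölder continuous), so a suitable linear combination interpolates $\delta_{ij}$; the candidate \eqref{eq:phiidef} itself, once we check its nodal values, exhibits an explicit member of $V_i$. Closedness and the affine structure come from the fact that point evaluation $\phi\mapsto\phi(x_j)$ is a bounded linear functional on $(V,\|\cdot\|_V)$ (using the Hölder/embedding estimates that make elements of $V$ continuous for $d\le 3$), so $V_i$ is the preimage of a point under a continuous affine map; strict convexity of $\phi\mapsto\int_\Omega|\diiv(a\nabla\phi)|^2$ on the affine space is immediate because $\|\cdot\|_V$ is a genuine norm on $V$ (the map $\phi\mapsto\diiv(a\nabla\phi)$ is injective on $\H^1_0$ by uniqueness for \eqref{eqn:scalar}), so the functional is a strictly convex quadratic and the constrained minimizer, if it exists, is unique.

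The heart of the argument is identifying the minimizer via the Euler–Lagrange / orthogonality characterization. I would argue that $\phi\in V_i$ is optimal if and only if $\int_\Omega \diiv(a\nabla\phi)\,\diiv(a\nabla\psi)\,dx=0$ for every $\psi\in V_0:=\{\psi\in V: \psi(x_j)=0\ \forall j\in\cN\}$ (the linear space of admissible variations). So the job is to check that the explicit $\phi_i$ of \eqref{eq:phiidef} (a) lies in $V_i$ and (b) satisfies this orthogonality. For (b), note $\diiv(a\nabla\tau(\cdot,x_j))=-G(\cdot,x_j)$ by \eqref{eq:odhuippokoh} (one application of the operator to $\tau$ gives, up to sign, the Green's function), so $\diiv(a\nabla\phi_i)=-\sum_j P_{ij}G(\cdot,x_j)$. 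Then for $\psi\in V_0$, writing $g_\psi:=-\diiv(a\nabla\psi)\in L^2$, the cross term is $\sum_j P_{ij}\int_\Omega G(x,x_j)\,g_\psi(x)\,dx = \sum_j P_{ij}\,\psi(x_j)$ by the Green's representation formula $\psi(x_j)=\int_\Omega G(x,x_j)g_\psi(x)\,dx$, and this vanishes since $\psi\in V_0$. For (a), the same Green's representation applied with $g=-\diiv(a\nabla\tau(\cdot,x_k))=G(\cdot,x_k)$... more directly, $\phi_i(x_k)=\sum_j P_{ij}\tau(x_k,x_j)=\sum_j P_{ij}\Theta_{jk}=(P\Theta)_{ik}=\delta_{ik}$, using $\tau(x_k,x_j)=\Theta_{kj}$ from before and $P=\Theta^{-1}$ from \eqref{eq:defP}. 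Membership in $V$ itself follows because each $\tau(\cdot,x_j)\in V$ and $V$ is a linear space.

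The main obstacle — and the place requiring care rather than cleverness — is justifying the Green's representation formula $\psi(y)=\int_\Omega G(x,y)\,(-\diiv(a\nabla\psi))(x)\,dx$ for $\psi\in V$ and points $y=x_j$, together with the companion identity for $\tau$, at the level of regularity available here ($a\in L^\infty$, $G(\cdot,y)\in \H^1_0$ only away from $y$ with the integrability bound \eqref{eq:boundonGtheta}). This is where one must argue that the duality pairing is legitimate: $G(\cdot,y)\in L^2$ pairs with $g_\psi\in L^2$, and the identity follows by testing the defining equation for $G$ with $\psi$ and the defining equation for $\psi$ with $G$ and using symmetry of $a$ — i.e. a symmetric bilinear-form argument, possibly via approximation of $G$ by regularized Green's functions or by a density argument in $L^2$ source terms. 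Once this representation is in hand, everything else is bookkeeping with the matrices $\Theta$ and $P$. I would also remark that existence of the constrained minimizer can alternatively be obtained abstractly (projection in a Hilbert space, since $(V,\|\cdot\|_V)$ is complete and $V_0$ is closed), so that the explicit formula \eqref{eq:phiidef} is then forced to coincide with it by uniqueness once (a) and (b) are verified.
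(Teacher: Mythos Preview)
Your proposal is correct and follows essentially the same route as the paper: both verify $\phi_i\in V_i$ via $\phi_i(x_k)=\sum_j P_{ij}\tau(x_k,x_j)=(P\Theta)_{ik}=\delta_{ik}$, both compute $-\diiv(a\nabla\phi_i)=\sum_j P_{ij}G(\cdot,x_j)$, and both establish optimality through the orthogonality $\int_\Omega\diiv(a\nabla\phi_i)\,\diiv(a\nabla\psi)=\sum_j P_{ij}\psi(x_j)=0$ for $\psi\in V_0$ (the paper phrases this as $I(v)=I(\phi_i)+I(v-\phi_i)+2J(\phi_i,v-\phi_i)$ with $J=0$, which is the same computation). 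Your treatment is in fact slightly more careful than the paper's in two places---you make explicit that closedness of $V_i$ follows from boundedness of point evaluation on $(V,\|\cdot\|_V)$, and you correctly flag the Green's representation $\psi(x_j)=\int_\Omega G(x,x_j)(-\diiv(a\nabla\psi))\,dx$ as the step requiring justification at this regularity---whereas the paper simply asserts these without comment.
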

\begin{Remark}
 It is important to note that in practical (numerical) applications each element
$\phi_i$ would be obtained by solving the quadratic optimization problem \eqref{eqn:convexopt} rather than through the representation formula \eqref{eq:phiidef}.
\end{Remark}
\begin{proof}
Let us first prove that $\phi_i \in V_i$.
First observe that for all $j \in \{1,\ldots,N\}$,
\begin{equation}\label{eq:ugwygwugye}
-\diiv \Big(a(x)  \nabla \tau(x,x_j)\Big)= G(x,x_j)
\end{equation}
and $\tau(x,x_j)=0$ on $\partial \Omega$. Noting that $\big\|\diiv (a(x)  \nabla \tau(x,x_j))\big\|_{L^2(\Omega)}^2=\Theta_{j,j}$ we deduce from
Lemma \ref{lem:Theta} that $\tau(x,x_j) \in V$. We conclude from \eqref{eq:phiidef} that $\phi_i \in V$.
Now observing  that
\begin{equation}
\Theta_{i,j}=\tau(x_i,x_j)
\end{equation}
we deduce (using the fact that $P \cdot \Theta$ is the $N\times N$ identity matrix) that
\begin{equation}
\phi_i(x_j)= (P \cdot \Theta)_{i,j}= \delta_{i,j}.
\end{equation}
We conclude that $\phi_i \in V_i$ which implies that   $V_i$ is non empty (it is  easy to check that it is a closed affine sub-space of $V$).

Now let us prove that problem \eqref{eqn:convexopt} is a strictly convex optimization problem over $V_i$. Let $v, w \in V_i$ such that $v\not=w$. Write for $\lambda \in [0,1]$,
\begin{equation}\label{eqn:convseept}
f(\lambda):=  \int_{\Omega}\Big|\diiv \big(a\nabla (v+\lambda (w-v))\big)\Big|^2.
\end{equation}
and we need to show that $f(\lambda)$ is a strictly convex function.
Observing that
\begin{equation}\label{eqn:convseepshiuhiwt}
f(\lambda)=  \int_{\Omega}\big|\diiv (a\nabla v)\big|^2+ 2\lambda \int_{\Omega}(\diiv (a\nabla v))(\diiv (a\nabla (w-v))) +\lambda^2\int_{\Omega}\big|\diiv (a\nabla (v-w))\big|^2
\end{equation}
and noting that $\int_{\Omega}\big|\diiv (a\nabla (v-w))\big|^2>0$ (otherwise one would have $v=w$ given that $v$ and $w$ are both equal to zero on $\partial \Omega$) we deduce that $f$ is strictly convex in $\lambda$. We conclude that (see, for example, \cite[pp. 35, Proposition 1.2]{EkTe:1987})
that Problem \eqref{eqn:convexopt} is a strictly convex optimization problem over $V_i$ and that it admits a unique minimizer in $V_i$.

Let us now prove that $\phi_i$ is the minimizer of \eqref{eqn:convexopt}. Let $v\in V_i$ with $v\not=\phi_i$. Write $I(v):=\int_{\Omega}|\diiv (a\nabla v)|^2$. We have
\begin{equation}\label{eqn:convseeedeept}
I(v)=  I(\phi_i)+I(v-\phi_i)+2 J(\phi_i,v-\phi_i)
\end{equation}
with
\begin{equation}\label{eqn:conv26eedeept}
J(\phi_i,v-\phi_i) =  \int_{\Omega}(\diiv (a\nabla \phi_i))(\diiv (a\nabla (v-\phi_i))).
\end{equation}
Note that (as before) $I(v-\phi_i)>0$ if $v\not=\phi_i$ and using \eqref{eq:ugwygwugye} and \eqref{eq:phiidef} we obtain that
\begin{equation}\label{eq:phijkkijoijiidef}
-\diiv (a(x)\nabla \phi_i(x)):=\sum_{j=1}^{N} P_{i,j}\, G(x,x_j)
\end{equation}
which leads to
\begin{equation}\label{eqn:conv26eedieieept}
J(\phi_i,v-\phi_i)=  \sum_{j=1}^{N} P_{i,j}\, \big(v(x_j)-\phi_i(x_j)\big)=0
\end{equation}
where in the last equality we have used $v(x_j)-\phi_i(x_j)=0$ for all $j\in \{1,\ldots,N\}$. It follows that $I(v)>I(\phi_i)$ which implies that
$\phi_i$ is the minimizer of \eqref{eqn:convexopt}.
\end{proof}

\subsection{Variational properties of the interpolation basis}
Write  $V_0$ the subset of $V$ defined by
\begin{equation}\label{eq:hiuhiu33}
 V_0:=\big\{v\in V: v(x_i)=0, \forall i\in \{1,\ldots,N\}\big\}
\end{equation}
For $u, v\in V$, define the (scalar) product $\<\cdot,\cdot\>$ by
\begin{equation}
 \<u, v\> := \int_\Omega (\diiv (a \nabla u))(\diiv (a \nabla v))
\end{equation}

The following theorem shows that the functions $\phi_i$ generate a linear space interpolating elements of $V$ with minimal $\<.,.\>$-norm (see \eqref{eqn:minimizingproperty}).

\begin{Theorem}\label{lem:minimizingproperty}
It holds true that
\begin{itemize}
\item The basis $\phi_i$ is orthorgonal to $V_0$ with respect to the product $\<\cdot,\cdot\>$, i.e.
\begin{equation}\label{eqn:orthogonality}
 \<\phi_i,v\> =0, \quad \forall i\in \{1,\ldots,N\} \text{ and } \forall v\in V_0
\end{equation}
\item $\sum_i w_i \phi_i$ is the unique minimizer of
\begin{equation}\label{eqn:minimizingproperty}
 \<w,w\>=\int_\Omega \big(\diiv (a \nabla w)\big)^2
\end{equation}
over all $w\in V$ such that $w(x_i)=w_i$.
\item For all $i\in \{1,\ldots,N\}$ and for all $v\in V$,
\begin{equation}\label{eqn:orthogonalidoddoty}
 \<\phi_i,v\> =\sum_{j=1}^{N} P_{i,j}\,  v(x_j)
\end{equation}
\item For all $i,j\in \{1,\ldots,N\}$,
\begin{equation}\label{eqn:orthogjuoddoty}
 \<\phi_i,\phi_j\> =P_{i,j}
\end{equation}
\end{itemize}
\end{Theorem}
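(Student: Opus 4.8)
The plan is to derive everything from the representation formula \eqref{eq:phijkkijoijiidef}, namely $-\diiv(a\nabla\phi_i)=\sum_{j=1}^N P_{i,j}\,G(\cdot,x_j)$, together with the defining property of the Green's function. The central computation is the third bullet \eqref{eqn:orthogonalidoddoty}: for $v\in V$, I would write $\langle\phi_i,v\rangle=\int_\Omega \diiv(a\nabla\phi_i)\,\diiv(a\nabla v)$, substitute the formula for $\diiv(a\nabla\phi_i)$ to get $-\sum_j P_{i,j}\int_\Omega G(x,x_j)\,\diiv(a\nabla v)(x)\,dx$, and then recognize the integral as the evaluation of $v$ at $x_j$. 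Concretely, since $v\in\H^1_0(\Omega)$ with $\diiv(a\nabla v)\in L^2$, and $G(\cdot,x_j)$ solves $-\diiv(a\nabla G(\cdot,x_j))=\delta(\cdot-x_j)$ with zero boundary data, an integration by parts (Green's identity) gives $\int_\Omega G(x,x_j)\,\big(-\diiv(a\nabla v)(x)\big)\,dx=\langle \delta(\cdot-x_j),v\rangle=v(x_j)$; the boundary terms vanish because both $G(\cdot,x_j)$ and $v$ vanish on $\partial\Omega$. Hence $\langle\phi_i,v\rangle=\sum_j P_{i,j}\,v(x_j)$, which is \eqref{eqn:orthogonalidoddoty}.

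Once \eqref{eqn:orthogonalidoddoty} is in hand, the remaining three bullets are immediate corollaries. For the orthogonality \eqref{eqn:orthogonality}, take $v\in V_0$: then $v(x_j)=0$ for all $j$, so the right-hand side of \eqref{eqn:orthogonalidoddoty} is zero. For \eqref{eqn:orthogjuoddoty}, take $v=\phi_j$ in \eqref{eqn:orthogonalidoddoty} and use $\phi_j(x_k)=\delta_{j,k}$ (Proposition \ref{prop:nonempty}) to get $\langle\phi_i,\phi_j\rangle=\sum_k P_{i,k}\,\delta_{j,k}=P_{i,j}$; symmetry of $P$ makes this consistent. For the minimization property \eqref{eqn:minimizingproperty}: given target values $w_i$, set $\phi:=\sum_i w_i\phi_i$, which satisfies $\phi(x_j)=w_j$ by the interpolation property. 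For any other $w\in V$ with $w(x_j)=w_j$, the difference $w-\phi$ lies in $V_0$, so by the orthogonality bullet $\langle\phi,w-\phi\rangle=\sum_i w_i\langle\phi_i,w-\phi\rangle=0$. Expanding $\langle w,w\rangle=\langle\phi,\phi\rangle+2\langle\phi,w-\phi\rangle+\langle w-\phi,w-\phi\rangle=\langle\phi,\phi\rangle+\langle w-\phi,w-\phi\rangle$ gives $\langle w,w\rangle\ge\langle\phi,\phi\rangle$, with equality iff $\langle w-\phi,w-\phi\rangle=0$, i.e. $\diiv(a\nabla(w-\phi))=0$; since $w-\phi$ also vanishes on $\partial\Omega$, this forces $w=\phi$, giving uniqueness. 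This is the same Pythagorean argument already used in the proof of Proposition \ref{prop:nonempty}.

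The one step that needs genuine care — the "main obstacle" — is justifying the integration by parts $\int_\Omega G(x,x_j)\big(-\diiv(a\nabla v)\big)\,dx=v(x_j)$ rigorously, since $G(\cdot,x_j)$ is only in $\H^1_{\loc}$ away from the singularity and has a $|x-x_j|^{2-d}$ (or logarithmic, for $d=2$) singularity at $x_j$. The clean way is to argue by density and duality: $G(\cdot,x_j)$ is precisely the Riesz representer of the evaluation functional $v\mapsto v(x_j)$ on $V$ (well-defined because elements of $V$ are Hölder continuous for $d\le 3$, as noted in Section \ref{sec:varforminterbas}), so $\int_\Omega G(x,x_j)\,h(x)\,dx = u_h(x_j)$ where $u_h$ solves $-\diiv(a\nabla u_h)=h$ with zero Dirichlet data; applying this with $h=-\diiv(a\nabla v)$ yields $u_h=v$ and hence the claimed identity. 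Alternatively one excises a small ball $B_\rho(x_j)$, integrates by parts on $\Omega\setminus B_\rho(x_j)$, and lets $\rho\to0$, checking that the boundary integral over $\partial B_\rho(x_j)$ converges to $v(x_j)$ using the continuity of $v$ and the standard behavior of $G$ and its conormal derivative near the pole — but the duality argument is cleaner and is the one I would present. Everything else is bookkeeping with the matrices $P$ and $\Theta$.
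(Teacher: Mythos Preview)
Your proposal is correct and follows essentially the same route as the paper: derive \eqref{eqn:orthogonalidoddoty} from the representation \eqref{eq:phijkkijoijiidef}, then read off \eqref{eqn:orthogonality} and \eqref{eqn:orthogjuoddoty} as special cases, and obtain the minimization property by the Pythagorean expansion $\langle w,w\rangle=\langle\phi,\phi\rangle+\langle w-\phi,w-\phi\rangle$. Your added justification of the identity $\int_\Omega G(\cdot,x_j)\big(-\diiv(a\nabla v)\big)=v(x_j)$ via the Riesz-representer/duality argument is a welcome elaboration of a step the paper simply asserts.
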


\begin{proof}
Equation \eqref{eqn:orthogonality}  trivially follows from \eqref{eqn:orthogonalidoddoty}.
\eqref{eqn:orthogonalidoddoty} is a direct consequence of \eqref{eq:phijkkijoijiidef}.
\eqref{eqn:orthogonalidoddoty} leads to $ \<\phi_i,v\>=0$ for $v\in V_0$.
To prove that $w=\sum_i w_i \phi_i$ is the unique minimizer of \eqref{eqn:minimizingproperty}
over  $W:=\big\{w\in V\mid w(x_i)=w_i \text{ for all }i\in \{1,\ldots,N\}\big\}$ we simply observe that if $w'$ is another element of $W$ then $w'-w$ belongs to $V_0$ and for $w'\not=w$ we have (from \eqref{eqn:orthogonality}, using $<w,w'-w>=0$) that
\begin{equation}\label{eqn:minimizingpropertybis}
 \<w',w'\>=\<w,w\>+\<w'-w,w'-w\>> \<w,w\>\,.
\end{equation}
For \eqref{eqn:orthogjuoddoty}, using \eqref{eqn:orthogonalidoddoty} and writing $I_N$ the $N\times N$ identity matrix we have
\begin{equation}\label{eqn:orthogonalidoddojty}
 \<\phi_i,\phi_j\> =\sum_{k=1}^{N} P_{ik}\,  \phi_j(x_k)=\big(P\cdot I_N\big)_{i,j}=P_{i,j}.
\end{equation}
\end{proof}

\begin{Remark}
It is easy to check that the orthogonality \eqref{eqn:orthogonality} implies that $\phi_i$ solves
\begin{equation}\label{eq:kuhishiu3e}
\begin{cases}
\diiv\Big(a\nabla \big(\diiv (a\nabla \phi_i)\big)\Big)=0 &\text{ on }\Omega\setminus \{x_j\mid j\in \cN\}\\
\phi_i=\diiv (a\nabla \phi_i)=0 &\text{ on }\partial\Omega \\
\phi_i(x_j)=\delta_{i,j}.
\end{cases}
\end{equation}
However, \eqref{eq:kuhishiu3e} alone cannot be used to uniquely identify $\phi_i$ as \eqref{eq:phiidef}. This is due to the fact that the solution of \eqref{eq:kuhishiu3e} may not be unique. As a counter-example consider $\Omega=(-1,1)$, $x_1=0$, $\cN=\{1\}$ and $a=I_d$. For $\alpha \in \R$ define
\begin{equation}
\begin{cases}
\psi_\alpha(x):=\frac{\alpha-1}{2}x^3 +\frac{3}{2}(\alpha-1)x^2+\alpha x+1 &\text{ on }[-1,0]\\
\psi_\alpha(x):=\frac{\alpha+1}{2}x^3 -\frac{3}{2}(\alpha+1)x^2+\alpha x+1 &\text{ on }[0,1]
\end{cases}
\end{equation}
then for any $\alpha \in \R$,  $\psi_\alpha$ is a solution \eqref{eq:kuhishiu3e}. Using the variational formulation of $\phi_i$ it is possible to show that to enforce the uniqueness of the solution of \eqref{eq:kuhishiu3e} we also need to require that (see Proposition \ref{prop:idiuuhueh3}), for some $c\in \R^N$,
\begin{equation}\label{eq:duu3iuhee}
\diiv\Big(a\nabla \big(\diiv (a\nabla \phi_i)\big)\Big)=\sum_{j=1}^N c_j \delta(x-x_j) \text{ on }\Omega
\end{equation}
which in dimension one is equivalent to the continuity of $\diiv(a\nabla \phi_i)$ across the coarse nodes $(x_j)_{j\in \cN}$. Note that \eqref{eq:duu3iuhee}
 implies that for all $v\in V_0$
\begin{equation}
\int_{\Omega} \diiv(a\nabla \phi_i) \diiv(a\nabla v)=\int_{\Omega}v\,\diiv\Big(a\nabla \big(\diiv (a\nabla \phi_i)\big)\Big)=0.
\end{equation}
\end{Remark}

\section{From a Higher Order Poincar\'{e} Inequality to the accuracy of the interpolation basis}\label{sec:Poincare}
In this section we introduce a new higher order  Poincar\'{e} inequality and derive the accuracy of the interpolation space computed in \eqref{eqn:convexopt}.
This new class can be thought of as a generalization of Sobolev inequalities for functions with scattered zeros (see \cite{Matveev:1992a, Matveev:1994, Matveev:1996, Matveev:1997, Narcowich:2005}) to operators with rough coefficients.

 \subsection{A Higher Order Poincar\'{e} Inequality}
 We will first present the new Poincar\'{e} inequality (Theorem\ref{lem:Poincare}). Analogous inequalities when $a=I_d$ can be found in Theorem 1.1 of \cite{Narcowich:2005} and in \cite{Duchon:1978} but the presence of the $L^\infty$ matrix $a$ renders the proof much more difficult and requires a new strategy based on the following lemma.

\begin{Lemma}\label{lem:murat}
Let $d\leq 3$ and $B_1$ be the open ball of center $0$ and radius $1$. There exists a finite strictly positive constant $C_{d,\lambda_{\min}(a),\lambda_{\max}(a)}$ such that for all $v\in \H^1(B_1)$
 such that $\diiv(a\nabla v)\in L^2(B_1)$ it holds true that
\begin{equation}\label{eqn:PoincareNarcowichmurat}
\|v-v(0)\|_{L^2(B_1)}^2 \leq C_{d,\lambda_{\min}(a),\lambda_{\max}(a)}  \Big(\|\nabla v\|_{L^2(B_1)}^2+\big\|\diiv (a\nabla v)\big\|_{L^2(B_1)}^2\Big)
\end{equation}
\end{Lemma}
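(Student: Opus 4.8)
The plan is a compactness--contradiction argument; the only point that needs care is that every constant entering it depends on $a$ only through $\lambda_{\min}(a),\lambda_{\max}(a)$ (and $d$), which is exactly what the De Giorgi--Nash--Moser theory delivers for merely $L^\infty$ coefficients. Fix $0<\lambda\leq\Lambda$ and suppose the asserted inequality failed to hold with a constant depending only on $d,\lambda,\Lambda$ over the class of symmetric matrix fields $a$ with $\lambda|\xi|^2\leq \xi^T a(x)\xi\leq \Lambda|\xi|^2$. Since the inequality is invariant under $v\mapsto \alpha v$ and under $v\mapsto v+\text{const}$, there would then exist such $a_n$ and functions $v_n\in\H^1(B_1)$ with $\diiv(a_n\nabla v_n)\in L^2(B_1)$ which, after subtracting $v_n(0)$ and normalizing in $L^2(B_1)$, satisfy $v_n(0)=0$, $\|v_n\|_{L^2(B_1)}=1$ and $\|\nabla v_n\|_{L^2(B_1)}^2+\|\diiv(a_n\nabla v_n)\|_{L^2(B_1)}^2\to 0$.

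In particular $(v_n)$ is bounded in $\H^1(B_1)$, so, passing to a subsequence, $v_n\rightharpoonup v$ in $\H^1(B_1)$ and $v_n\to v$ in $L^2(B_1)$ by the Rellich--Kondrachov theorem, whence $\|v\|_{L^2(B_1)}=1$. Since $\|\nabla v_n\|_{L^2(B_1)}\to 0$ we get $\nabla v\equiv 0$, so $v$ agrees a.e. with a constant $c$ with $c^2\,|B_1|=1$; in particular $c\neq 0$.

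The contradiction comes from the nodal condition $v_n(0)=0$. Set $g_n:=\diiv(a_n\nabla v_n)$, so $g_n\to 0$ in $L^2(B_1)$, and note that for $d\leq 3$ one has $2>d/2$, the subcritical range in which divergence-form operators with bounded measurable coefficients enjoy interior H\"older estimates (e.g. \cite{St:1965,GiTr:1983}): there are $\alpha\in(0,1)$ and a constant $C$, both depending only on $d,\lambda,\Lambda$, such that $\|v_n\|_{C^{0,\alpha}(\overline{B_{1/2}})}\leq C\big(\|v_n\|_{L^2(B_1)}+\|g_n\|_{L^2(B_1)}\big)$, and the right-hand side is bounded uniformly in $n$. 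By the Arzel\`a--Ascoli theorem, along a further subsequence $v_n$ converges uniformly on $\overline{B_{1/2}}$, and its limit must coincide with $v\equiv c$; evaluating at the origin, $0=v_n(0)\to c$, so $c=0$, contradicting $c\neq 0$. Specializing $\lambda=\lambda_{\min}(a)$, $\Lambda=\lambda_{\max}(a)$ then gives the lemma.

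I expect the delicate step to be precisely this uniformity of the interior regularity constants over the whole ellipticity class: one must use the form of De Giorgi--Nash--Moser in which neither $\alpha$ nor $C$ depends on any modulus of continuity of $a$, only on $d$ and the ellipticity bounds; this is also what makes the point value $v(0)$ meaningful when $d\leq 3$. A more computational alternative would dispense with the contradiction by combining the Poincar\'e--Wirtinger inequality $\|v-\bar v\|_{L^2(B_1)}\leq C\|\nabla v\|_{L^2(B_1)}$ with a quantitative bound on $|\bar v-v(0)|$ obtained from oscillation decay for $v$ together with the identity $v(0)=0$ (which lets the oscillation of $v$ on small balls centered at $0$ control $|v|$ there); the compactness route above is, however, shorter and makes the role of the hypothesis $d\leq 3$ transparent.
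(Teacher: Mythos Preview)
Your proof is correct and follows essentially the same compactness--contradiction route as the paper: normalize so that $v_n(0)=0$ and $\|v_n\|_{L^2(B_1)}=1$, use Rellich--Kondrachov together with $\|\nabla v_n\|_{L^2}\to 0$ to obtain a nonzero constant limit, and then invoke the uniform interior H\"older estimate (with constants depending only on $d$ and the ellipticity bounds, for the varying sequence $a_n$) to pass the nodal condition $v_n(0)=0$ to the limit and reach a contradiction. Your explicit appeal to Arzel\`a--Ascoli and your emphasis on the uniformity of the De Giorgi--Nash--Moser constants over the ellipticity class make precise the two points the paper leaves somewhat implicit.
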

\begin{proof}
The proof is per absurdum. Note that since $d\leq 3$ the assumptions $v\in \H^1(B_1)$ and $\diiv(a\nabla v)\in L^2(B_1)$ imply the H\"{o}lder continuity of $v$ in $B_1$.
Assume that \eqref{eqn:PoincareNarcowichmurat} does not hold. Then there exists a sequence $v_n$ and a sequence $a_n'$ whose maximum and minimum eigenvalues are uniformly bounded by $\lambda_{\min}(a)$ and $\lambda_{\max}(a)$ (we need to introduce that sequence because we want the constant in \eqref{eqn:PoincareNarcowichmurat} to depend only $d,\lambda_{\min}(a),\lambda_{\max}(a)$)
such that
\begin{equation}\label{eqn:PoincareNarcowichmurat1}
\|v_n-v_n(0)\|_{L^2(B_1)}^2 > n  \Big(\|\nabla v_n\|_{L^2(B_1)}^2+\big\|\diiv (a_n'\nabla v_n)\big\|_{L^2(B_1)}^2\Big)
\end{equation}
Letting $w_n=\frac{v_n-v_n(0)}{\|v_n-v_n(0)\|_{L^2(B_1)}}$ we obtain that $w_n(0)=0$, $\|w_n\|_{L^2(B_1)}=1$ and
\begin{equation}\label{eqn:PoincareNarcowichmurat2}
\|\nabla w_n\|_{L^2(B_1)}^2+\big\|\diiv (a_n'\nabla w_n)\big\|_{L^2(B_1)}^2 < \frac{1}{n}
\end{equation}
Since
\begin{equation}\label{eqn:PoincareNarcowichmurat3}
\| w_n\|_{\H^1(B_1)}< 1+\frac{1}{n}\leq 2
\end{equation}
it follows that there exists a subsequence $w_{n_j}$ and a $w\in \H^1(B_1)$ such that $w_{n_j}\rightharpoonup w$ weakly in $\H^1(B_1)$ and
$\nabla w_{n_j}\rightharpoonup \nabla w$ weakly in $L^2(B_1)$.  Using $\|\nabla w_n\|_{L^2(B_1)}\leq 1/n$ we deduce that $\nabla w=0$ which implies that $w$ is a constant in $B_1$. Since by the Rellich–-Kondrachov theorem the embedding $\H^1(B_1)\subset L^2(B_1)$ is compact it follows from \eqref{eqn:PoincareNarcowichmurat3} that $w_{n_j}\rightarrow w$ strongly in $L^2(B_1)$ which (using $\|w_n\|_{L^2(B_1)}=1$) implies that $\|w\|_{L^2(B_1)}=1$.
Now \eqref{eqn:PoincareNarcowichmurat3}  together with the fact that $\big\|\diiv (a_n'\nabla w_n)\big\|_{L^2(B_1)}^2$ is uniformly bounded and that $d\leq 3$ implies that $w_n$ is uniformly H\"{o}lder continuous on $B(0,\frac{1}{2})$ (see for instance \cite{Stampaccia:1964}). This implies that $w$ is continuous in $B(0,\frac{1}{2})$ and that $w(0)=0$. This contradicts the fact that $w$ is a constant in $B_1$ with $\|w\|_{L^2(B_1)}=1$.
\end{proof}

We also need the following lemma.
\begin{Lemma}\label{lem:lemdesuipoinc}
Let $f\in \H^1_0(\Omega)$ and define $\Omega_{2H}$ as the set of points in $\Omega$ at distance at most $2H$ from $\partial \Omega$. There exists a constant $C_{d,\Omega}$ such that
\begin{equation}\label{eq:gj2gjhee}
\|f\|_{L^2(\Omega_{2H})}\leq C_{d,\Omega} H \|\nabla f\|_{L^2(\Omega_{2H})}
\end{equation}
\end{Lemma}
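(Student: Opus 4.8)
The statement is a Poincaré-type inequality localized to the boundary strip $\Omega_{2H}$. The plan is to exploit that every point of $\Omega_{2H}$ lies within distance $2H$ of $\partial\Omega$, where $f$ vanishes (by $f\in\H^1_0(\Omega)$), so that $f$ can be recovered from its gradient by integrating along a short path reaching the boundary. First I would treat the model situation: assume locally $\Omega$ looks like a Lipschitz graph, so near a boundary point we have coordinates in which, for $x$ in a small neighbourhood of the strip, the segment from $x$ in the inner-normal-ish direction hits $\partial\Omega$ after length $O(H)$ (here the interior cone condition and piecewise Lipschitz boundary assumed on $\Omega$ in the introduction enter). Writing $f(x)=-\int_0^{\ell(x)} \partial_s f(\gamma_x(s))\,ds$ along such a path $\gamma_x$ of length $\ell(x)\le C_{d,\Omega}H$, Cauchy–Schwarz gives $|f(x)|^2 \le \ell(x)\int_0^{\ell(x)}|\nabla f(\gamma_x(s))|^2\,ds$; integrating in $x$ over the strip and using a Jacobian/co-area change of variables to bound the resulting iterated integral by $\|\nabla f\|_{L^2(\Omega_{2H})}^2$ (the paths stay inside $\Omega_{2H}$, or a fixed dilate of it, since they have length $O(H)$) yields $\|f\|_{L^2(\Omega_{2H})}^2 \le C_{d,\Omega}H^2\|\nabla f\|_{L^2(\Omega_{2H})}^2$.

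To go from the local model to the general $\Omega$, I would cover $\partial\Omega$ by finitely many charts $U_1,\dots,U_M$ in each of which the boundary is a Lipschitz graph, take $H$ small enough that $\Omega_{2H}\subset \bigcup_k U_k$ (for $H$ above a threshold the inequality is trivial by enlarging the constant, since $\|f\|_{L^2(\Omega_{2H})}\le\|f\|_{L^2(\Omega)}\le C_\Omega\|\nabla f\|_{L^2(\Omega)}$ by the global Poincaré inequality), subordinate a partition of unity, and run the path argument chart by chart. Alternatively, and perhaps more cleanly, one can avoid charts by using that a $\H^1_0$ function extended by zero outside $\Omega$ lies in $\H^1(\R^d)$, and then apply a one-dimensional Poincaré inequality on lines in a fixed direction: for a.e. line $L$ meeting $\Omega_{2H}$, the portion $L\cap\Omega$ is a union of intervals, and on the component containing a given point of $\Omega_{2H}$ the function vanishes at (at least) one endpoint which is within $O(H)$ — this last claim again needs the boundary geometry to rule out long thin inward "fingers", which is where the piecewise-Lipschitz plus interior cone condition is used.

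The main obstacle is precisely this geometric point: controlling, uniformly in $x\in\Omega_{2H}$, the length of an admissible path (or line segment) inside $\Omega$ from $x$ to $\partial\Omega$ by $C_{d,\Omega}H$. For a general bounded open set this can fail (spiralling cusps), so the Lipschitz-boundary/cone hypotheses must be invoked honestly; once that geometric lemma is in hand, the rest is a routine fundamental-theorem-of-calculus-plus-Cauchy–Schwarz computation and a covering argument, both of which I would state without grinding through the constants.
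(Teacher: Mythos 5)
Your proposal matches the paper's proof in approach and substance: the paper likewise observes $f=0$ on $\partial\Omega$, covers $\partial\Omega\cup\Omega_{2H}$ by charts using the piecewise-Lipschitz boundary, and writes $f(x)$ as an integral of $\nabla f$ along a path of length $O(H)$ reaching the boundary, then applies Cauchy--Schwarz and a covering/change-of-variables argument. Your write-up is more detailed (and correctly flags the geometric point about path lengths and the possible need for a fixed dilate of $\Omega_{2H}$, which the paper glosses over and which is harmless in the application), but it is the same argument.
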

\begin{proof}
The proof is analogous to Poincar\'{e} inequality. We  observe that $u=0$ on $\partial \Omega$. Recalling  that  $\Omega$ is piecewise Lipschitz continuous we
cover $\partial \Omega \cup \Omega_{2H}$ with a collection of charts and express $u(x)$ as an integral from the boundary (where $u=0$) to $x$ of $\nabla u(y)$ along a path of length bounded by a multiple of $H$.
\end{proof}

\begin{Theorem}\label{lem:Poincare}
Let $f\in V_0$ (and $d\leq 3$). It holds true that
\begin{equation}\label{eqn:Poincare}
\|\nabla f\|_{L^2(\Omega)}\leq C H \big\|\diiv(a\nabla f)\big\|_{L^2(\Omega)}
\end{equation}
where the constant $C$ depends only $d,\lambda_{\min}(a)$, $\lambda_{\max}(a)$ and $\Omega$.
\end{Theorem}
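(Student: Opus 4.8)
The plan is to reduce the statement to an intermediate Poincar\'{e}-type bound on $f$ itself,
\begin{equation*}
\|f\|_{L^2(\Omega)}^2 \;\le\; C\Big(H^2\,\|\nabla f\|_{L^2(\Omega)}^2 + H^4\,\big\|\diiv(a\nabla f)\big\|_{L^2(\Omega)}^2\Big)\qquad (f\in V_0),
\end{equation*}
with $C$ depending only on $d,\lambda_{\min}(a),\lambda_{\max}(a),\Omega$, and then to close the estimate with the energy identity $\int_\Omega a\nabla f\cdot\nabla f=-\int_\Omega f\,\diiv(a\nabla f)$, which is valid since $f\in\H^1_0(\Omega)$ and $\diiv(a\nabla f)\in L^2(\Omega)$. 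The point of this two-step structure is that Lemma~\ref{lem:murat} controls $f$ by its gradient and the right-hand side (not the reverse), so the energy identity is what lets us ``invert'' the inequality.

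For the intermediate bound I would first rescale Lemma~\ref{lem:murat}: for any ball $B(x_0,r)\subset\Omega$, the change of variables $y\mapsto x_0+ry$ leaves the ellipticity bounds of $a$ unchanged (this is precisely why Lemma~\ref{lem:murat} was proved uniformly over coefficient fields), and tracking the powers of $r$ produced by $\nabla$ and by $\diiv(a\nabla\cdot)$ gives
\begin{equation*}
\|v-v(x_0)\|_{L^2(B(x_0,r))}^2 \;\le\; C_{d,\lambda_{\min}(a),\lambda_{\max}(a)}\Big(r^2\|\nabla v\|_{L^2(B(x_0,r))}^2 + r^4\big\|\diiv(a\nabla v)\big\|_{L^2(B(x_0,r))}^2\Big).
\end{equation*}
Next I would tile $\R^d$ by a grid of cubes of side $H$; for each cube $Q$ that meets $\Omega\setminus\Omega_{\kappa H}$ (with $\kappa$ a fixed dimensional constant and $\Omega_{\kappa H}$ the boundary layer of width $\kappa H$), I pick an interpolation point $x_{i(Q)}$ within distance $H$ of a point of $Q\cap\Omega$, which is possible because $H$ is the mesh norm. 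For $d\le 3$ one then has $Q\subset B(x_{i(Q)},cH)$ for a dimensional constant $c$, and, since $Q$ is far from $\partial\Omega$, $B(x_{i(Q)},cH)\subset\Omega$. Applying the rescaled Lemma~\ref{lem:murat} on each such ball, using $f(x_{i(Q)})=0$ because $f\in V_0$, and summing over $Q$ — the overlap of the balls $B(x_{i(Q)},cH)$ being bounded by a purely dimensional constant, as their centers lie within $O(H)$ of the grid nodes — yields the intermediate bound on $\Omega\setminus\Omega_{\kappa H}$. On the remaining layer $\Omega_{\kappa H}$, Lemma~\ref{lem:lemdesuipoinc} (whose statement and proof hold verbatim, up to a factor $\kappa/2$, with $\kappa H$ in place of $2H$) gives $\|f\|_{L^2(\Omega_{\kappa H})}\le C_{d,\Omega}\,H\,\|\nabla f\|_{L^2(\Omega)}$. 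Adding the two contributions proves the intermediate bound.

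To finish, test the equation against $f$: by ellipticity and integration by parts (with $f=0$ on $\partial\Omega$),
\begin{equation*}
\lambda_{\min}(a)\,\|\nabla f\|_{L^2(\Omega)}^2 \;\le\; \int_\Omega a\nabla f\cdot\nabla f \;=\; -\int_\Omega f\,\diiv(a\nabla f)\;\le\; \|f\|_{L^2(\Omega)}\,\big\|\diiv(a\nabla f)\big\|_{L^2(\Omega)}.
\end{equation*}
Substituting the intermediate bound for $\|f\|_{L^2(\Omega)}$ on the right and applying Young's inequality to absorb the resulting $\|\nabla f\|_{L^2(\Omega)}$ term into the left-hand side leaves $\|\nabla f\|_{L^2(\Omega)}^2\le C\,H^2\,\|\diiv(a\nabla f)\|_{L^2(\Omega)}^2$, which is \eqref{eqn:Poincare}.

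The main obstacle is the covering and summation step: the constant must depend only on $d,\lambda_{\min}(a),\lambda_{\max}(a),\Omega$ and, in particular, not on any separation (packing) radius of the points $(x_i)$. This is handled by centering the local balls on a regular grid rather than on the $x_i$ themselves, so that the overlap count is purely dimensional; the price is that one cannot reach $\partial\Omega$ with these interior balls, which forces the $O(H)$-wide boundary layer where the scattered-zeros mechanism fails and one must fall back on Lemma~\ref{lem:lemdesuipoinc}. A secondary technical point is getting the $r$-scaling in the rescaled Lemma~\ref{lem:murat} right, since the $r^4$ weight on the $\diiv(a\nabla\cdot)$ term is exactly what makes the final Young-inequality absorption produce the sharp $H^1$-power of $H$.
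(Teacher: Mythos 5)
Your proposal is correct and follows essentially the same route as the paper's proof: establish the intermediate bound $\|f\|_{L^2(\Omega)}^2 \le C(H^2\|\nabla f\|_{L^2}^2 + H^4\|\diiv(a\nabla f)\|_{L^2}^2)$ by covering the interior with balls of radius $O(H)$ centered at interpolation points (rescaled Lemma~\ref{lem:murat}) plus the boundary layer (Lemma~\ref{lem:lemdesuipoinc}), then close with the energy identity and Young's inequality. The one place where you are more explicit than the paper is the finite-overlap step: the paper simply asserts the existence of a subset $\cN'\subset\cN$ of interpolation points whose balls cover $\Omega\setminus\Omega_{2H}$ with dimensional overlap, whereas your grid-of-cubes construction shows concretely how to select such a subset and why the overlap constant is purely dimensional even though the mesh norm $H$ does not preclude clustering of the $x_i$; this is a worthwhile clarification of the same argument rather than a different one.
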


\begin{proof}
We have by integration by parts and the Cauchy-Schwartz inequality,
\begin{equation}
\begin{split}
\int_{\Omega}(\nabla f)^T a\nabla f&= - \int_\Omega f\big(\diiv (a \nabla f)\big)\\
&\leq  \| f\|_{L^2(\Omega)} \big\|\diiv(a\nabla f)\big\|_{L^2(\Omega)}.
\end{split}
\end{equation}
Therefore, using Young's inequality we obtain that for any $\alpha>0$,
\begin{equation}\label{eq:kdgi3igee}
\begin{split}
\int_{\Omega}(\nabla f)^T a\nabla f
&\leq \frac{1}{2\alpha H^2} \| f\|_{L^2(\Omega)}^2 +\frac{\alpha H^2}{2}\big\|\diiv(a\nabla f)\big\|_{L^2(\Omega)}^2.
\end{split}
\end{equation}

Now it follows from the definition \eqref{def:H} that there exists and index set $\cN' \subset \cN$ such that:
(i) for all $i\in \cN'$, $B(x_i,H) \subset \Omega$ (where $B(x_i,H)$ is the ball of center $x_i$ and radius $R$) (ii)
 $\cup_{ i\in \cN'} B(x_i,H) \cup \Omega_{2H}=\Omega $  (iii) and that any $x\in \Omega$ is contained in at most $K$ balls of $(B(x_i,H))_{i\in \cN'}$ where $K$ is a finite number depending only on the dimension $d$. It follows from (ii) that
\begin{equation}\label{eq:kdkhduheerd}
 \| f\|_{L^2(\Omega)}^2\leq \sum_{i\in \cN' } \|f\|_{L^2(B(x_i,H))}^2+\|f\|_{\Omega_{2H}}^2
\end{equation}
Observing that $f(x_i)=0$ we obtain from  Lemma \ref{lem:murat} and scaling that for each $i\in \cN'$
\begin{equation}\label{eq:kwwehduheerd}
\|f\|_{L^2(B(x_i,H))}^2\leq C_{d,\lambda_{\min}(a),\lambda_{\max}(a)}  \Big(H^2 \|\nabla f\|_{L^2(B(x_i,H))}^2+H^4 \big\|\diiv (a\nabla f)\big\|_{L^2(B(x_i,H))}^2\Big)
\end{equation}
Therefore \eqref{eq:kwwehduheerd} and \eqref{eq:gj2gjhee} imply that
\begin{equation}\label{eq:sgjhsejwh2gjhee}
\begin{split}
\|f\|_{L^2(\Omega)}^2 \leq &C_{d,\Omega} H^2 \|\nabla f\|_{L^2(\Omega_{2H})}^2 \\+C_{d,\lambda_{\min}(a),\lambda_{\max}(a)}&  \sum_{i\in \cN'}
 \Big(H^2 \|\nabla f\|_{L^2(B(x_i,H))}^2+H^4 \big\|\diiv (a\nabla f)\big\|_{L^2(B(x_i,H))}^2\Big).
 \end{split}
\end{equation}
Using (iii) we deduce that
\begin{equation}\label{eq:sgjhssedejsehee}
\begin{split}
\|f\|_{L^2(\Omega)}^2 \leq & C_{d,\Omega, \lambda_{\min}(a),\lambda_{\max}(a)} K \\&\Big( H^2 \|\nabla f\|_{L^2(\Omega)}^2 + H^4 \big\|\diiv (a\nabla f)\big\|_{L^2(\Omega)}^2\Big)
\end{split}
\end{equation}
Combining \eqref{eq:sgjhssedejsehee} with \eqref{eq:kdgi3igee} we obtain that
\begin{equation}\label{eq:kdgiiiiyiuy3igee}
\begin{split}
\int_{\Omega}(\nabla f)^T a\nabla f
&\leq  \frac{\alpha H^2}{2}\big\|\diiv(a\nabla f)\big\|_{L^2(\Omega)}^2\\&+
\frac{1}{\alpha} C_{d,\Omega, \lambda_{\min}(a),\lambda_{\max}(a)} \Big(\|\nabla f\|_{L^2(\Omega)}^2 + H^2 \big\|\diiv (a\nabla f)\big\|_{L^2(\Omega)}^2\Big).
\end{split}
\end{equation}
which concludes the proof by taking $\alpha=  \frac{1}{2\lambda_{\min}(a)}C_{d,\Omega, \lambda_{\min}(a),\lambda_{\max}(a)}$.
\end{proof}

 \subsection{Accuracy of the interpolation basis}
Now let us use \eqref{eqn:Poincare} to estimate the interpolation error.

\begin{Corollary}\label{lem:interpolationerror}
Let $u\in V$ be the solution of \eqref{eqn:scalar} and  $\uin(x):=\sum_{i=1}^N u(x_i)\phi_i(x)$. It holds true that
\begin{equation}
 \|u-\uin\|_{\H^1_0(\Omega)}\leq C H \|g\|_{L^2(\Omega)}
\end{equation}
where the constant $C$ depends only on $\lambda_{\min}(a)$, $\lambda_{\max}(a)$ and $\Omega$.
\end{Corollary}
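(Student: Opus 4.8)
The plan is to apply the higher-order Poincaré inequality of Theorem~\ref{lem:Poincare} to the interpolation error $f := u - \uin$, after verifying that $f$ lies in the space $V_0$ on which that inequality is valid. First I would check that $f \in V$: indeed $u \in V$ since it solves \eqref{eqn:scalar} with $g \in L^2(\Omega)$, and $\uin = \sum_i u(x_i)\phi_i$ is a finite linear combination of the basis elements $\phi_i \in V_i \subset V$, so $f \in V$ as well. Next I would verify the vanishing-at-nodes condition: by the interpolation property $\phi_i(x_j) = \delta_{i,j}$ from Proposition~\ref{prop:nonempty}, we get $\uin(x_j) = \sum_i u(x_i)\phi_i(x_j) = u(x_j)$ for every $j \in \cN$, hence $f(x_j) = 0$ for all $j$, i.e. $f \in V_0$. (This uses that elements of $V$ are Hölder continuous for $d \le 3$, so point evaluation makes sense.)

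With $f \in V_0$ established, Theorem~\ref{lem:Poincare} gives
\begin{equation}
\|\nabla f\|_{L^2(\Omega)} \leq C H \big\|\diiv(a\nabla f)\big\|_{L^2(\Omega)}.
\end{equation}
It remains to control the right-hand side by $\|g\|_{L^2(\Omega)}$. I would write $\diiv(a\nabla f) = \diiv(a\nabla u) - \diiv(a\nabla \uin) = -g - \diiv(a\nabla \uin)$. The key is the variational/orthogonality property from Theorem~\ref{lem:minimizingproperty}: since $\uin = \sum_i u(x_i)\phi_i$ is exactly the $\<\cdot,\cdot\>$-minimal interpolant of the nodal values $(u(x_i))$, and $f = u - \uin \in V_0$, orthogonality \eqref{eqn:orthogonality} yields $\<\uin, f\> = 0$. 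Therefore
\begin{equation}
\big\|\diiv(a\nabla f)\big\|_{L^2(\Omega)}^2 = \<f,f\> = \<u - \uin, f\> = \<u, f\> = \int_\Omega \big(\diiv(a\nabla u)\big)\big(\diiv(a\nabla f)\big),
\end{equation}
and by Cauchy--Schwarz this is at most $\|\diiv(a\nabla u)\|_{L^2(\Omega)} \|\diiv(a\nabla f)\|_{L^2(\Omega)} = \|g\|_{L^2(\Omega)} \|\diiv(a\nabla f)\|_{L^2(\Omega)}$, so dividing gives $\|\diiv(a\nabla f)\|_{L^2(\Omega)} \leq \|g\|_{L^2(\Omega)}$.

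Combining the two displays gives $\|\nabla f\|_{L^2(\Omega)} \leq C H \|g\|_{L^2(\Omega)}$, and since $\|f\|_{\H^1_0(\Omega)}$ is equivalent to $\|\nabla f\|_{L^2(\Omega)}$ by the standard Poincaré inequality on $\Omega$ (absorbing the constant into $C$), this is the claimed estimate. The constant depends only on $\lambda_{\min}(a)$, $\lambda_{\max}(a)$ and $\Omega$, inherited from Theorem~\ref{lem:Poincare} and the Poincaré constant of $\Omega$. The only real subtlety — and the step I would be most careful about — is the orthogonality argument: one must make sure $\uin$ is genuinely the $\<\cdot,\cdot\>$-projection in the sense of Theorem~\ref{lem:minimizingproperty}, which it is because it interpolates the nodal values $u(x_i)$; everything else is bookkeeping, since the analytic heavy lifting (the compactness argument behind the higher-order Poincaré inequality) has already been done in Lemma~\ref{lem:murat} and Theorem~\ref{lem:Poincare}.
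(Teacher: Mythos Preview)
Your proof is correct and follows essentially the same approach as the paper: verify $u-\uin\in V_0$, apply the higher-order Poincar\'e inequality of Theorem~\ref{lem:Poincare}, and use Theorem~\ref{lem:minimizingproperty} to bound $\|\diiv(a\nabla(u-\uin))\|_{L^2(\Omega)}$ by $\|g\|_{L^2(\Omega)}$. The only cosmetic difference is that you invoke the orthogonality \eqref{eqn:orthogonality} to get $\|\diiv(a\nabla f)\|_{L^2}\le\|g\|_{L^2}$ directly, whereas the paper expands $\diiv(a\nabla f)=-g+\diiv(a\nabla\uin)$, bounds each term separately, and uses the minimizing property \eqref{eqn:minimizingproperty} for the second; the two are equivalent formulations of the same projection argument.
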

\begin{proof}
Observing that $u-\uin\in V_0(\Omega)$ we deduce from Theorem \ref{lem:Poincare} that
\begin{align}
 \int_\Omega |\nabla (u-\uin)|^2  & \leq C H^2 \int_\Omega \big|g-\sum_i u(x_i)\diiv (a \nabla \phi_i)\big|^2\,dx \\
				     \label{eqn:311}& \leq C H^2 \Big(\int_\Omega g^2 \,dx +\int_\Omega \big(\sum_i u(x_i)\diiv (a \nabla \phi_i)\big)^2 \,dx\Big)\\
				     \label{eqn:312}& \leq C H^2 \Big(\int_\Omega g^2 \,dx + \int_\Omega \big(\diiv (a \nabla u)\big)^2 \,dx\Big)\\
				     & \leq C H^2 \|g\|_{L^2(\Omega)}^2
\end{align}
Note that in the third inequality, we have used the Lemma \ref{lem:minimizingproperty}, which states that the linear combination of $\phi_i$ minimizes $\|\cdot\|_V$ among all the functions in $V(\Omega)$ sharing the same  values at the nodes $\{x_i\}$. Note also that through this paper, for the sake of clarity, we only keep track of the dependence of $C$ and not its precise numerical value (we will, for instance, write $2 C/\lambda_{\min}(a)$ as $C$ to avoid cumbersome expressions).
\end{proof}

\subsection{Accuracy of the FEM with elements $\phi_i$}

The following theorem shows that the finite element method  with elements $\phi_i$
 achieves the optimal (see  \cite{BeOw:2010}) convergence rate  $\mathcal{O}(H)$ in $\H^1$ norm in its approximation of \eqref{eqn:scalar}.

\begin{Theorem}\label{thm:convergenceglobalbasis}
Let $u$ be the solution of equation \eqref{eqn:scalar}, and $u^H$ be the finite element solution of \eqref{eqn:scalar} over the linear space spanned by the elements $\{\phi_i,\, i=1,\ldots,N\}$ identified in \eqref{eqn:convexopt}. It holds true that
 \begin{equation}
	\label{eq:convergenceglobalbasis}
  \|u-u^H\|_{\H^1_0(\Omega)} \leq C H \|g\|_{L^2(\Omega)}
 \end{equation}
 where the constant $C$ depends only on $\lambda_{\min}(a)$ and $\lambda_{\max}(a)$.
\end{Theorem}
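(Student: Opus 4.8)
The plan is to use Céa's lemma together with the interpolation estimate already established in Corollary~\ref{lem:interpolationerror}. The finite element solution $u^H$ over the span of $\{\phi_i\}$ is the Galerkin projection of $u$ onto this subspace with respect to the energy bilinear form $a(v,w):=\int_\Omega (\nabla v)^T a\nabla w$. First I would recall the standard fact that $u^H$ is a quasi-optimal approximation of $u$ in the energy norm: by Galerkin orthogonality $a(u-u^H, v^H)=0$ for every $v^H$ in the span of $\{\phi_i\}$, and hence by the uniform ellipticity and boundedness \eqref{skjdhskhdkh3e},
\begin{equation*}
\lambda_{\min}(a)\|\nabla(u-u^H)\|_{L^2(\Omega)}^2 \leq a(u-u^H,u-u^H) \leq a(u-u^H, u-w^H) \leq \lambda_{\max}(a)\|\nabla(u-u^H)\|_{L^2(\Omega)}\|\nabla(u-w^H)\|_{L^2(\Omega)}
\end{equation*}
for every $w^H$ in the approximation space, which yields $\|\nabla(u-u^H)\|_{L^2(\Omega)} \leq \frac{\lambda_{\max}(a)}{\lambda_{\min}(a)}\inf_{w^H}\|\nabla(u-w^H)\|_{L^2(\Omega)}$.

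Next I would choose the specific competitor $w^H=\uin=\sum_{i=1}^N u(x_i)\phi_i$, which is legitimate since $u$ is H\"older continuous (as $d\leq 3$ and $g\in L^2$) so the nodal values $u(x_i)$ are well defined. Then the infimum is bounded by $\|\nabla(u-\uin)\|_{L^2(\Omega)}$, and Corollary~\ref{lem:interpolationerror} gives $\|u-\uin\|_{\H^1_0(\Omega)}\leq C H\|g\|_{L^2(\Omega)}$, hence in particular the same bound on $\|\nabla(u-\uin)\|_{L^2(\Omega)}$. Combining these two displays and absorbing $\lambda_{\max}(a)/\lambda_{\min}(a)$ into the constant $C$ yields $\|\nabla(u-u^H)\|_{L^2(\Omega)}\leq CH\|g\|_{L^2(\Omega)}$, and since both $u$ and $u^H$ vanish on $\partial\Omega$, the Poincar\'e inequality on $\Omega$ upgrades this to the full $\H^1_0(\Omega)$ norm, with the constant still depending only on $\lambda_{\min}(a)$, $\lambda_{\max}(a)$ (and $\Omega$, which the authors fold into the universal constant).

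There is essentially no serious obstacle here: the entire argument is the textbook Céa/quasi-optimality reduction, and all the analytic work has already been done in establishing the higher-order Poincar\'e inequality (Theorem~\ref{lem:Poincare}) and the interpolation bound (Corollary~\ref{lem:interpolationerror}). The only point requiring a little care is the well-posedness of the Galerkin problem on the span of $\{\phi_i\}$ — but this is immediate because the $\phi_i$ are linearly independent (they satisfy $\phi_i(x_j)=\delta_{ij}$) and the energy form is coercive on $\H^1_0(\Omega)$, so its restriction to any finite-dimensional subspace admits a unique solution. I would also remark, as the authors do elsewhere, that the constant is independent of $N$ and of any mesh regularity, depending only on $H$ through \eqref{def:H}; this independence is inherited directly from Theorem~\ref{lem:Poincare}.
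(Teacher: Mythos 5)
Your argument is correct and is exactly the paper's: both invoke the best-approximation property of the Galerkin solution in the $a$-energy norm (which the paper states directly as ``$u^H$ minimizes $\int_\Omega(\nabla u-\nabla u^H)^Ta(\nabla u-\nabla u^H)$'' and you derive via Céa/Galerkin orthogonality), then take $\uin$ as the competitor and apply Corollary~\ref{lem:interpolationerror}. The only cosmetic remark is that your middle step $a(u-u^H,u-u^H)\le a(u-u^H,u-w^H)$ is in fact an equality by Galerkin orthogonality; this does not affect the conclusion.
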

\begin{proof}
The theorem is  a straightforward consequence  of Corollary \ref{lem:interpolationerror} and of
 the fact that $u^H$ minimizes the (squared) distance $\int_\Omega {}(\nabla u -\nabla u^H)^T a(\nabla u -\nabla u^H)$ over the linear space spanned by the elements $\phi_i$.
\end{proof}

\begin{Remark}
Recall that the convergence rate of a FEM applied to \eqref{eqn:scalar} with piecewise linear elements can be arbitrarily bad \cite{BaOs:2000}.
Recall also that the convergence rate of Theorem \ref{thm:convergenceglobalbasis} is optimal  \cite{BeOw:2010}
(this is related to  the Kolmogorov n-widths \cite{Pi:1985,  Melenk:2000, Vy:2008}, which measure how accurately a given set of functions can be approximated by linear spaces of dimension $n$ in a given norm).
\end{Remark}

\section{Recovering $u$ from partial measurements}\label{subsec:inverseproblem}
In this section we will show that our method provides a natural solution to the inverse problem of recovering $u$ from partial measurements.
Consider the problem \eqref{eqn:scalar}. In this inverse problem one is given the following (incomplete) information:
 $a(x)$ is known, $g$ is unknown but we know that $\|g\|_{L^2(\Omega)}\leq M$, $u$ is unknown but its values are known on a finite set of points $\{x_j\,:\, j\in \cN\}$ (through site measurements). The inverse problem is then to recover $u$ (accurately in the $\H^1$ norm).
A solution of this inverse problem can be obtained by first computing the minimizers of \eqref{eqn:convexopt} (or their localized version \eqref{eqn:convexoptlocal}) and approximate $u$ with $\uin(x):=\sum_{i=1}^N u(x_i)\phi_i(x)$. Corollary  \ref{lem:interpolationerror} can then be used to bound the accuracy of the recovery by
\begin{equation}
 \|u-\uin\|_{\H^1_0(\Omega)}\leq C H M
\end{equation}
where the constant $C$ depends only on $\lambda_{\min}(a)$, $\lambda_{\max}(a)$ and $\Omega$. Note that the method is meshless.

\section{Generalization to $d\geq 4$}\label{subsec:gnerealizationdgeq4}

Although for the sake of simplicity we have restricted our presentation to $d\leq 3$, the method and results of this paper can be generalized to $d\geq 4$ by introducing the space $V^{m}$ defined as the set of functions $u\in \H^1_0(\Omega)$ such that $\big(\diiv(a \nabla \cdot)\big)^m u \in L^2(\Omega)$ where $m$ is an integer $m\geq 1$ and $\big(\diiv(a \nabla \cdot)\big)^m u$ is the $m$-iterate of the operator $\big(\diiv(a \nabla \cdot)\big)$, i.e.
\begin{equation}
\big(\diiv(a \nabla \cdot)\big)^1 u:=\big(\diiv(a \nabla u)\big)\text{ and } \big(\diiv(a \nabla \cdot)\big)^m u:= \big(\diiv(a \nabla \cdot)\big)^{m-1}  \big(\diiv(a \nabla u)\big).
\end{equation}
Introducing  $\|.\|_{V^m}$ as the norm on $V^m$ defined by
\begin{equation}
\|u\|_{V^m}:=\Big\|\big(\diiv(a \nabla \cdot)\big)^m u\Big\|_{L^{2}(\Omega)}.
\end{equation}
The results of this paper can be generalized to $d\geq 4$ by choosing $m$ such that $(d-1)/2 \leq  m \leq d/2$, assuming that $g\in L^{2m}(\Omega)$
 and using the interpolation basis defined as the minimizer of

\begin{equation}\label{eqn:convexoptgenm}
\begin{cases}
\text{Minimize } \|\phi\|_{V^m}\\
\text{Subject to }  \phi\in V^m_i.
\end{cases}
\end{equation}
where
\begin{equation}\label{eq:uygey33}
V^m_i:=\{\phi\in V^m \mid \phi(x_i)=1 \text{ and } \phi(x_j)=0,  \text{ for } j\in \{1,\ldots, N\} \text{ such that } j\neq i\}.
\end{equation}
Note that the solutions of \eqref{eqn:convexoptgenm} are $2m$-harmonic in the sense that they satisfy
\begin{equation}
\big(\diiv(a \nabla \cdot)\big)^{2m} \phi(x)=0 \text{ for }x\not=x_j.
\end{equation}
Note that the solutions of \eqref{eqn:convexopt} are bi-harmonic on $\Omega\setminus \{x_j\,:\, j\in \cN\}$.

For $d=1$ one can also obtain an accurate interpolation basis by minimizing $\int_{\Omega}\nabla \phi a \nabla \phi$ subject to the pointwise interpolation constraints used in \eqref{eq:uygey33}. By doing so one would obtain basis elements $\phi_i$ that are harmonic in $\Omega\setminus \{x_j\,:\, j\in \cN\}$ and recover a numerical homogenization method based on ``harmonic coordinates''   \cite{BaCaOs:1994, OwZh:2007a}.

Note also that the variation formulation \eqref{eqn:convexoptgenm} does not require the operator $L:=\diiv(a\nabla \cdot)$ to be self-adjoint (but solely depends on the minimization of the $L^2$ norm of $L^{2m}\phi$).

\section{Rough Polyharmonic Splines}\label{sec:rps}

\subsection{The elements $\phi_i$ as rough polyharmonic splines}
The interpolation basis $\phi_i$ constructed in this work can be seen as a generalization of polyharmonic splines to differential operators with rough coefficients. More precisely, as polyharmonic splines lead to an accurate interpolation of smooth functions (solutions of the laplace operator), the ``rough polyharmonic splines'' introduced in this paper lead to an accurate interpolation of solutions of differential operators with rough coefficients. Note also that as polyharmonic splines are $m$-harmonic in the sense that they satisfy $\Delta^m \phi=0$ away from the coarse nodes,  the solutions of
 \eqref{eqn:convexoptgenm} are $2m$-harmonic in the sense that they satisfy $\big(\diiv(a \nabla \cdot)\big)^{2m} \phi(x)=0$ away from those coarse nodes.
This generalization is challenging in several major ways due to the lack of regularity of coefficients. In particular, the Fourier analysis toolkit is no longer available and the simple task of enforcing clamped boundary conditions (trivial with smooth coefficients) becomes a significant difficulty with rough coefficients: consider, for instance, the problem of finding a function $\phi\in \H^1\big(B(0,1)\big)$ that satisfies zero Neumann and Dirichlet boundary conditions on $\partial B(0,1)$ and such that $\diiv(a\nabla \phi)\in L^2\big(B(0,1)\big)$. We will now give a short review of polyharmonic splines.

\subsection{Polyharmonic splines: a short review.}
Let $X$ be a discrete (possibly finite) set of points of $\R^d$. Let $u$ be a real valued function defined on $X$.
Polyharmonic splines have emerged through the problem of interpolating $u$ (from its known values on $X$) to a real function $\phi$ defined on $\R^d$ (with $\phi(x_j)=u(x_j)$ for $x_j\in X$). Let $m$ be an integer strictly greater than $d/2$.  The idea behind polyharmonic splines is to seek the most ``regular'' interpolation by selecting $\phi$ to be a minimizer of the semi-norm
 \begin{equation}\label{eq:poharsp}
 \Big(\int_{\R^d} \sum_{\alpha \in \N^d,\, |\alpha|=m} c_\alpha  \big(\frac{\partial^{\alpha} u}{\partial x^\alpha}(x)\big)^2\,dx\Big)^\frac{1}{2}
 \end{equation}
  over all functions $\phi$ in $\H^m(\R^d)$ interpolating $u$ on $X$,  where the parameters $c_{\alpha}$ are positive coefficients usually \cite{Duchon:1977, Rossini:2009} chosen to be equal to
  $\frac{m!}{\alpha!}$ to ensure the rotational invariance of the semi-norm (those coefficients are also sometimes chosen to be equal to one  \cite{Rabut1:1992, Rabut2:1992}).
 Writing $\Delta$ the Laplace operator on $\R^d$ ($\Delta \phi=\sum_{i=1}^d \frac{\partial^2 \phi}{\partial x_i^2}$) and $\Delta^m$ the m-iterate of $\phi$ ($\Delta^1=\Delta$ and $\Delta^k \phi=\Delta(\Delta^{k-1}\phi)$) the solutions of \eqref{eq:poharsp} satisfy $\Delta^m \phi=0$ on $\R^d\setminus X$ and are therefore called
 m-harmonic splines.  A Polyharmonic  spline of order $m$  is therefore also commonly defined \cite{Kounchev:2005}
 as a tempered distribution $\phi$ on $\R^d$ that is $2m-d-1$ continuously differentiable and such that $ \Delta^m \phi=0$ on $\R^d\setminus X$.

Polyharmonic splines can be represented via weighted sums of shifted fundamental solutions of $\Delta$, (i.e. they can be written  \cite{Duchon:1976}
  $\phi=\sum_{x_j\in X} c_j \tau(x-x_j) +p_{m-1}(x)$ where $\tau$ is the fundamental solution of $\Delta^m$ ($\Delta^m \tau(x)=\delta(x)$) and $p_{m-1}$ is a polynomial of degree at most $m-1$).

 When $X$ forms a regular lattice of $\R^d$ the resulting splines are referred to as polyharmonic cardinal splines. In this case (which has been extensively  studied \cite{Schoenberg:1973, Madych1:1990,Madych2:1990,Madych3:1990}) it can be shown \cite{Rabut:1990, Rabut1:1992, Rabut2:1992} that the basis elements allowing for the interpolation of $u$ can be expressed as  $\Delta_d^m \tau$  (where $\tau$ is the fundamental solutions of $\Delta^m$  and $\Delta_d$ is the finite difference discretization of $\Delta$ on $\Z^d$).

Polyharmonic splines can be traced back to the seminal work of Harder and Desmarais \cite{Harder:1972} on the interpolation of functions of two variables based on the  minimization of a quadratic functional corresponding to the bending energy of a thin plate (we refer to \cite{Brownlee:2004} for a review) and the extension of this idea to higher dimensions in the seminal work of Duchon  \cite{Duchon:1976,Duchon:1977,Duchon:1978} (built on earlier work by Atteia \cite{Atteia:1970}). We also refer to the related work of Schoenberg ($d=1$) on cardinal spline interpolation \cite{Schoenberg:1973}, to
\cite{Madych:1988} for $L^\infty$  radial basis functions interpolation error estimates and to \cite{Matveev:1992b} for the rapid decay (locality) of polyharmonic splines.

\section{Localization of the interpolation basis}\label{sec:Localization}

\subsection{Introduction of the localized basis}\label{subsec:introlocbasis}
For each $i\in \{1,\dots,N\}$ let $\Omega_i$ be an open subset of $\Omega$ containing $x_i$. Let $\cN(\Omega_i)$ be the set of indices corresponding to the
interior nodes of the coarse
mesh contained in $\Omega_i$, i.e.
\begin{equation}\label{eq:defcnomi}
\cN(\Omega_i):=\{j \in \{1,\ldots, N\} \mid  x_j \in \Omega_i \}.
\end{equation}
Define
\begin{equation}
\begin{split}
V(\Omega_i):=\big\{\phi\in \H^1_0(\Omega)\mid \phi=0 \text{ on }  \partial\Omega_i \cup (\Omega\setminus \Omega_i) \text{ and } \diiv(a\nabla \phi)\in L^2(\Omega_i)\big\}
\end{split}
\end{equation}
and
\begin{equation}
\begin{split}
V_i(\Omega_i):=&\big\{\phi\in V(\Omega_i) \mid
\phi(x_i)=1 \text{ and } \phi(x_j)=0,  \text{ for } j\in \cN(\Omega_i) \text{ such that } j\neq i \big\}
\end{split}
\end{equation}
Consider
the following optimization problem over $V_i(\Omega_i)$:
\begin{equation}\label{eqn:convexoptlocal}
\begin{cases}
\text{Minimize }  \int_{\Omega_i}\big|\diiv (a\nabla \phi)\big|^2\\
\text{Subject to }  \phi\in V_i(\Omega_i).
\end{cases}
\end{equation}

The proof of the following proposition is identical to that of Proposition \ref{prop:nonempty} ($V_i$ is simply replaced by $V_i(\Omega_i)$).
\begin{Proposition}\label{prop:dhi3uhe}
$V_i(\Omega_i)$ is a non-empty closed convex affine subspace of $V(\Omega_i)$ under the norm $\|v\|_{V(\Omega_i)}^2:=\int_{\Omega_i}\big(\diiv(a\nabla v)\big)^2$.
Problem \eqref{eqn:convexoptlocal} is a strictly convex (quadratic) optimization problem over $V_i(\Omega_i)$ with a unique minimizer denoted by $\phi_i^{\loc}$.
\end{Proposition}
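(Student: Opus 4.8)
The plan is to mirror the proof of Proposition~\ref{prop:nonempty} verbatim, replacing the global domain $\Omega$ by $\Omega_i$ and the global node set $\cN$ by $\cN(\Omega_i)$. First I would verify that $V_i(\Omega_i)$ is non-empty. For this I would introduce the Green's function $G^{\Omega_i}(x,y)$ of $-\diiv(a\nabla\cdot)$ on $\Omega_i$ with zero Dirichlet boundary conditions, set $\tau^{\Omega_i}(x,y):=\int_{\Omega_i}G^{\Omega_i}(x,z)G^{\Omega_i}(z,y)\,dz$, and form the matrix $\Theta^{\Omega_i}_{k,l}:=\tau^{\Omega_i}(x_k,x_l)$ indexed by $k,l\in\cN(\Omega_i)$. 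Exactly as in Lemma~\ref{lem:Theta}, the bounds on $G^{\Omega_i}$ (which hold on $\Omega_i\subset\Omega$ with constants depending only on $d,\lambda_{\min}(a),\lambda_{\max}(a)$ and now on $\Omega_i$ in the $d=2$ logarithmic case) show that the entries are finite, and $\Theta^{\Omega_i}$ is symmetric positive definite because $l^T\Theta^{\Omega_i}l=\|v\|_{L^2(\Omega_i)}^2$ where $v$ solves the Green's-function equation with right-hand side $\sum_l l_l\delta(\cdot-x_l)$ on $\Omega_i$; this is nonzero whenever $l\neq 0$. Setting $P^{\Omega_i}:=(\Theta^{\Omega_i})^{-1}$ and $\phi_i^{\loc}(x):=\sum_{j\in\cN(\Omega_i)}P^{\Omega_i}_{i,j}\tau^{\Omega_i}(x,x_j)$, one checks $\phi_i^{\loc}\in V(\Omega_i)$ (it vanishes together with $\diiv(a\nabla\cdot)$ on $\partial\Omega_i$ and is extended by zero outside, hence lies in $\H^1_0(\Omega)$) and $\phi_i^{\loc}(x_j)=(P^{\Omega_i}\Theta^{\Omega_i})_{i,j}=\delta_{i,j}$, so $V_i(\Omega_i)\neq\emptyset$.

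Next I would record that $V_i(\Omega_i)$ is a closed affine subspace of $V(\Omega_i)$: it is the translate by $\phi_i^{\loc}$ of the linear subspace $\{\phi\in V(\Omega_i):\phi(x_j)=0,\ \forall j\in\cN(\Omega_i)\}$, and closedness follows because $V(\Omega_i)$ equipped with $\|\cdot\|_{V(\Omega_i)}$ is complete (same argument as for $(V,\|\cdot\|_V)$) and because pointwise evaluation at the $x_j$ is continuous on $V(\Omega_i)$ (elements of $V(\Omega_i)$ are H\"older continuous on $\Omega_i$ since $d\leq 3$, by the De Giorgi--Nash--Stampacchia estimate, and the evaluation functionals are bounded). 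Convexity of the affine set is immediate.

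Then I would show strict convexity of the objective $I^{\loc}(\phi):=\int_{\Omega_i}|\diiv(a\nabla\phi)|^2$ on $V_i(\Omega_i)$. Given $v\neq w$ in $V_i(\Omega_i)$ and $f(\lambda):=I^{\loc}(v+\lambda(w-v))$, expanding the square gives $f(\lambda)=I^{\loc}(v)+2\lambda\int_{\Omega_i}(\diiv(a\nabla v))(\diiv(a\nabla(w-v)))+\lambda^2\int_{\Omega_i}|\diiv(a\nabla(v-w))|^2$; the quadratic coefficient is strictly positive because $\diiv(a\nabla(v-w))=0$ on $\Omega_i$ together with $(v-w)=0$ on $\partial\Omega_i$ would force $v=w$ on $\Omega_i$ (and both vanish outside $\Omega_i$), a contradiction. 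Hence $f$ is strictly convex, so by \cite[pp.~35, Proposition~1.2]{EkTe:1987} Problem~\eqref{eqn:convexoptlocal} has a unique minimizer, which we denote $\phi_i^{\loc}$; optionally one can identify it with the explicit formula above by repeating the orthogonality computation $J(\phi_i^{\loc},v-\phi_i^{\loc})=\sum_{j\in\cN(\Omega_i)}P^{\Omega_i}_{i,j}(v(x_j)-\phi_i^{\loc}(x_j))=0$ from the proof of Proposition~\ref{prop:nonempty}.

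The only genuinely new point — and the one I would be most careful about — is that the zero-trace condition on $\partial\Omega_i$ combined with $\diiv(a\nabla\phi)\in L^2(\Omega_i)$ must define a non-trivial space; this is exactly the clamped-boundary subtlety flagged in Section~\ref{sec:rps}, and it is resolved here precisely because $\tau^{\Omega_i}(\cdot,x_j)$ gives an explicit such function (being the potential generated by $-\diiv(a\nabla\cdot)$ acting on $G^{\Omega_i}(\cdot,x_j)\in L^2(\Omega_i)$). Everything else is a transcription of the $\Omega$-level arguments, with all constants now depending additionally on $\Omega_i$ in place of $\Omega$. Since the paper states the proof ``is identical to that of Proposition~\ref{prop:nonempty},'' I would simply note the replacements $\Omega\rightsquigarrow\Omega_i$, $\cN\rightsquigarrow\cN(\Omega_i)$, $V_i\rightsquigarrow V_i(\Omega_i)$ and refer to that proof.
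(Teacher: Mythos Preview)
Your proposal is correct and follows exactly the paper's approach: the paper states that the proof ``is identical to that of Proposition~\ref{prop:nonempty} ($V_i$ is simply replaced by $V_i(\Omega_i)$),'' and the explicit representation via $G^{\Omega_i}$, $\tau^{\Omega_i}$, $\Theta^{\Omega_i}$, and $P^{\Omega_i}=(\Theta^{\Omega_i})^{-1}$ that you construct is precisely the content of the Remark immediately following the proposition (equations~\eqref{eq:phiidefloc}--\eqref{eq:theta:edfloc}). Your additional care about closedness via continuity of point evaluation and about the clamped-boundary subtlety is sound and only makes explicit what the paper leaves implicit.
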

\begin{Remark}
As in Proposition \ref{prop:nonempty}, the unique minimizer of \eqref{eqn:convexoptlocal} can be represented using the Green's function of the
operator $-\diiv(a\nabla \cdot)$ with Dirichlet boundary condition on $\partial \Omega_i$, i.e., we have for $x\in \Omega_i$
\begin{equation}\label{eq:phiidefloc}
\phi_i^{\loc}(x):=\sum_{j\in \cN(\Omega_i)} P_{i,j}^{i,\loc}\, \tau^{i,\loc}(x,x_j)
\end{equation}
with
\begin{equation}\label{eq:thetaijloc}
\tau^{i,\loc}(x,y):=\int_{\Omega_i} G^{i,\loc}(x,z) G^{i,\loc}(z,y)\,dy
\end{equation}
where $G^{i,\loc}(x,y)$ is the Green's function of $-\diiv(a\nabla)$ in $\Omega_i$ with zero Dirichlet boundary condition, i.e.
the solution of (for $y\in \Omega_i$)
\begin{equation}\label{eqn:scalarGreensloc}
\begin{cases}
    -\diiv \Big(a(x)  \nabla G^{i,\loc}(x,y)\Big)=\delta(x-y) \quad \text{ for }\quad x \in \Omega_i;\\
    G^{i,\loc}(x,y)=0 \quad \text{for}\quad x\in \partial \Omega_i,
    \end{cases}
\end{equation}
and,  writing $N_i:=|\cN(\Omega_i)|$ the number of coarse nodes $x_j$ that are contained in $\Omega_i$, $P^{i,\loc}$ is the $N_i\times N_i$ positive definite symmetric matrix, defined over $\cN(\Omega_i)$, by $P^{i,\loc}=(\Theta^{i,\loc})^{-1}$ where $\Theta^{i,\loc}$ is the $N_i\times N_i$ positive definite symmetric matrix, defined over $\cN(\Omega_i)$, by
\begin{equation}\label{eq:theta:edfloc}
\Theta_{k,j}^{i,\loc}:=\int_{\Omega_i} G^{i,\loc}(x,x_k) G^{i,\loc}(x,x_j)\,dx.
\end{equation}
\end{Remark}
 \begin{Remark}
 It is important to note that in practical (numerical) applications each (localized) element $\phi_i^{\loc}$ is obtained by solving one (local, i.e. over $\Omega_i$) quadratic optimization problem \eqref{eqn:convexoptlocal} rather than through the representation formula \eqref{eq:phiidefloc} (which would require solving $N_i$ elliptic problems in $\Omega_i$ corresponding to the values of the local Green's function).
\end{Remark}

\subsection{Accuracy of the localized basis as a function of the norm of the difference between the global and the localized basis}
We introduce in this subsection lemmas that will be used to control the accuracy of the local basis $(\phi_i^{\loc})_{i\in \cN}$ in approximating solutions of
\eqref{eqn:scalar} (i.e.,  the error associated with replacing the global basis $(\phi_i)_{i \in \cN}$ with a localized version $(\varphi_i)_{i \in \cN}$). Although $(\varphi_i)_{i\in \cN}$  is arbitrary in the following lemma, it is intended to be a localized version of the global basis.
\begin{Lemma}\label{lem:errorloc1}
Let $(\varphi_i)_{i \in \cN}$ be $N$ elements of $\H^1_0(\Omega)$.
For $c\in \R^N$, $c\not=0$, define
\begin{equation}
v(x):=\sum_{i=1}^N c_i \phi_i(x)\quad \text{ and }\quad
w(x):=\sum_{i=1}^N c_i \varphi_i(x)
\end{equation}
It holds true that
\begin{equation}\label{eq:intermcomp2}
\frac{\|v-w\|_{\H^1_0(\Omega)}}{\big\|\diiv(a\nabla v)\big\|_{L^2(\Omega)}} \leq  C N \max_{i\in \cN}\|\phi_i-\varphi_i\|_{\H^1_0(\Omega)}
\end{equation}
where the constant $C$ is finite and depends only on $\lambda_{\min}(a)$, $\lambda_{\max}(a)$ and $\Omega$.
\end{Lemma}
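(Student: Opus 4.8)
Proof proposal.

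The plan is to reduce the statement to a linear-algebra bound on the spectrum of the matrix $\Theta$ of Lemma \ref{lem:Theta}. First I would apply the triangle inequality in $\H^1_0(\Omega)$ to $v-w=\sum_{i=1}^N c_i(\phi_i-\varphi_i)$, which gives
\begin{equation}
\|v-w\|_{\H^1_0(\Omega)}\leq \Big(\sum_{i=1}^N |c_i|\Big)\max_{i\in\cN}\|\phi_i-\varphi_i\|_{\H^1_0(\Omega)},
\end{equation}
so that everything reduces to controlling $\sum_i |c_i|$ by a constant multiple of $N\,\|\diiv(a\nabla v)\|_{L^2(\Omega)}$. By Cauchy--Schwarz, $\sum_i |c_i|\leq \sqrt{N}\,|c|$ where $|c|$ is the Euclidean norm of $c\in\R^N$, so it suffices to show $|c|\leq C\sqrt{N}\,\|\diiv(a\nabla v)\|_{L^2(\Omega)}$.

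Next I would use Theorem \ref{lem:minimizingproperty} (specifically \eqref{eqn:orthogjuoddoty}) to identify the quadratic form: since $v=\sum_i c_i\phi_i$,
\begin{equation}
\|\diiv(a\nabla v)\|_{L^2(\Omega)}^2=\<v,v\>=\sum_{i,j=1}^N c_i c_j \<\phi_i,\phi_j\>=c^T P c,
\end{equation}
hence $\|\diiv(a\nabla v)\|_{L^2(\Omega)}^2\geq \lambda_{\min}(P)\,|c|^2$, i.e. $|c|\leq \lambda_{\min}(P)^{-1/2}\,\|\diiv(a\nabla v)\|_{L^2(\Omega)}$. Since $P=\Theta^{-1}$ is symmetric positive definite, $\lambda_{\min}(P)^{-1}=\lambda_{\max}(\Theta)\leq \Tr(\Theta)=\sum_{i=1}^N\Theta_{i,i}$, and Lemma \ref{lem:Theta} gives $\Theta_{i,i}\leq K$ with $K$ depending only on $\lambda_{\min}(a)$, $\lambda_{\max}(a)$ and $\Omega$; therefore $\lambda_{\min}(P)^{-1/2}\leq\sqrt{NK}$. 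Combining the three displayed estimates yields
\begin{equation}
\|v-w\|_{\H^1_0(\Omega)}\leq \sqrt{N}\cdot\sqrt{NK}\,\|\diiv(a\nabla v)\|_{L^2(\Omega)}\max_{i\in\cN}\|\phi_i-\varphi_i\|_{\H^1_0(\Omega)}=\sqrt{K}\,N\,\|\diiv(a\nabla v)\|_{L^2(\Omega)}\max_{i\in\cN}\|\phi_i-\varphi_i\|_{\H^1_0(\Omega)},
\end{equation}
which is the claim with $C=\sqrt{K}$.

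There is no real obstacle here; the only point requiring a moment's thought is the chain $\lambda_{\min}(P)^{-1}=\lambda_{\max}(\Theta)\leq \Tr(\Theta)\leq NK$, which is what produces the extra factor $\sqrt{N}$ beyond the $\sqrt{N}$ coming from Cauchy--Schwarz and hence the overall linear factor $N$ in the statement. One should also note that $\|\diiv(a\nabla v)\|_{L^2(\Omega)}>0$ for $c\neq 0$ (equivalently $P$ is positive definite), so the quotient on the left-hand side of \eqref{eq:intermcomp2} is well defined; this is already contained in Lemma \ref{lem:Theta} and Proposition \ref{prop:nonempty}.
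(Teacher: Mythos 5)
Your proof is correct, and it reaches the same bound by a slightly different (and somewhat more streamlined) route than the paper. The paper writes the ratio exactly as a generalized Rayleigh quotient: letting $M_{i,j}=\int_\Omega \nabla(\phi_i-\varphi_i)^T\nabla(\phi_j-\varphi_j)$, it observes $\|v-w\|_{\H^1_0}^2=c^TMc$ and $\|\diiv(a\nabla v)\|_{L^2}^2=c^TPc$, then bounds
\[
\sup_{c\neq0}\frac{c^TMc}{c^TPc}=\lambda_{\max}(TMT)\leq \lambda_{\max}(M)\lambda_{\max}(\Theta)
\]
with $T=\Theta^{1/2}$, and finishes via $\lambda_{\max}(M)\leq N\max_i M_{i,i}=N\max_i\|\phi_i-\varphi_i\|_{\H^1_0}^2$ together with $\lambda_{\max}(\Theta)\leq NK$. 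You avoid introducing the Gram matrix $M$ and the square root $T$ altogether by replacing the exact identity $\|v-w\|_{\H^1_0}^2=c^TMc$ with the triangle inequality and an $\ell^1$--$\ell^2$ Cauchy--Schwarz, which pays one $\sqrt{N}$, and then get the second $\sqrt{N}$ from the identical spectral estimate $\lambda_{\max}(\Theta)\leq \Tr(\Theta)\leq NK$ applied to $c^TPc\geq\lambda_{\min}(P)|c|^2$. Both arguments pivot on the two same key facts — $\|\diiv(a\nabla v)\|_{L^2}^2=c^TPc$ from \eqref{eqn:orthogjuoddoty} and the uniform bound on $\Theta_{i,i}$ from Lemma \ref{lem:Theta} — and both lose a factor $N$ in the same two $\sqrt{N}$ increments; your version is simply a little lighter on linear-algebra machinery while giving the same constant up to rescaling.
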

\begin{proof}
We have, using \eqref{eqn:orthogjuoddoty},
\begin{equation}
\|v-w\|_{\H^1_0(\Omega)}= \big(\frac{c^T M c}{c^T Pc}\big)^\frac{1}{2} \big\|\diiv(a\nabla v)\big\|_{L^2(\Omega)}
\end{equation}
where $M$ is the $N\times N$ matrix defined by
\begin{equation}
M_{i,j}:=\int_{\Omega}(\nabla \phi_i-\nabla \varphi_i)^T (\nabla \phi_j-\nabla \varphi_j)
\end{equation}
and $P$ is defined in \eqref{eq:defP}.
Note that since $\Theta$ is symmetric positive definite, there exists a symmetric positive definite $N\times N$ matrix $T$ such that $\Theta=T^2$. Recalling that $P=\Theta^{-1}$, we obtain
\begin{equation}
\sup_{c\in \R^N,\, c\not=0}\frac{c^T M c}{c^T Pc}=\lambda_{\max}(T M T)\leq \lambda_{\max}(M)\lambda_{\max}(\Theta)
\end{equation}
Using \eqref{eq:theta:edf} we have for $c\in \R^N$
\begin{equation}
c^T \Theta c=\int_{\Omega}\big(\sum_{i=1}^N G(x,x_i) c_i\big)^2\,dx
\end{equation}
It follows from the Cauchy-Schwartz inequality that
\begin{equation}
c^T \Theta c\leq \big(\sum_{i=1}^N  c_i^2 \big)\Big(\sum_{i=1}^N \int_{\Omega}\big(G(x,x_i)\big)^2\,dx \Big).
\end{equation}
Using \eqref{eq:boundonGtheta}, we conclude that
\begin{equation}
\lambda_{\max}(\Theta)\leq N K
\end{equation}
where the constant $K$ is finite and depends only on $\lambda_{\min}(a)$, $\lambda_{\max}(a)$ and $\Omega$. To summarize we have obtained that
\begin{equation}\label{eq:intermcomp}
\frac{\|v-w\|_{\H^1_0(\Omega)}}{\big\|\diiv(a\nabla v)\big\|_{L^2(\Omega)}} \leq  C N^{1/2}\big(\lambda_{\max}(M)\big)^\frac{1}{2}
\end{equation}
 We now need to control $\lambda_{\max}(M)$. Since  $M$ is symmetric and positive its maximum eigenvalue is less or equal than its trace.
It follows that,
\begin{equation}\label{eq:gdtetd}
\lambda_{\max}(M)\leq \sum_{i=1}^N M_{i,i} \leq N \max_{i} M_{i,i}.
\end{equation}
Combining \eqref{eq:gdtetd} with \eqref{eq:intermcomp} we deduce \eqref{eq:intermcomp2}.
\end{proof}

The following lemma bounds the accuracy of the localized basis $(\phi_i^{\loc})_{i\in \cN}$ in approximating solutions of $\eqref{eqn:scalar}$. In particular, it shows that if $\max_{i\in \cN}\|\phi_i-\phi_i^{\loc}\|_{\H^1_0(\Omega)}$ is sufficiently small then the accuracy of the localized basis is similar to that of the global basis.

\begin{Lemma}\label{lem:errorbasis}
Let $u$ be the solution of equation \eqref{eqn:scalar}, and $u^{H,\loc}$ be the finite element solution of \eqref{eqn:scalar} over the linear space spanned by the elements $\{\phi_i^{\loc},\, i=1,\ldots,N\}$ identified in \eqref{eqn:convexoptlocal}. It holds true that
 \begin{equation}
	\label{eq:errorlocalbasis}
 \|u-u^{H,\loc}\|_{\H^1_0(\Omega)} \leq C \|g\|_{L^2(\Omega)} \Big( H  +  N \max_{i\in \cN}\|\phi_i-\phi_i^{\loc}\|_{\H^1_0(\Omega)}\Big)
 \end{equation}
 where the constant $C$ depends only on $\lambda_{\min}(a)$, $\lambda_{\max}(a)$ and $\Omega$.
\end{Lemma}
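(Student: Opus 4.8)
The plan is to compare $u^{H,\loc}$ with the global finite element approximation $u^H$ (whose accuracy is controlled by Theorem~\ref{thm:convergenceglobalbasis}) and with the Galerkin projection of $u$ onto the local space, using Céa's lemma and Lemma~\ref{lem:errorloc1} to absorb the discrepancy between the two bases. First I would recall that, since $u^{H,\loc}$ minimizes the $a$-energy distance $\int_\Omega (\nabla u-\nabla\psi)^T a (\nabla u-\nabla\psi)$ over $\psi$ in $\mathrm{span}\{\phi_i^{\loc}\}$, by the uniform ellipticity \eqref{skjdhskhdkh3e} it suffices to exhibit \emph{one} good element of the local space, namely $\psi:=\sum_i u(x_i)\phi_i^{\loc}$, and bound $\|u-\psi\|_{\H^1_0(\Omega)}$. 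Writing $\uin=\sum_i u(x_i)\phi_i$ as in Corollary~\ref{lem:interpolationerror}, the triangle inequality gives
\begin{equation}
\|u-\psi\|_{\H^1_0(\Omega)}\leq \|u-\uin\|_{\H^1_0(\Omega)}+\|\uin-\psi\|_{\H^1_0(\Omega)},
\end{equation}
and the first term is $\leq C H\|g\|_{L^2(\Omega)}$ by Corollary~\ref{lem:interpolationerror}.

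The second term is exactly of the form handled by Lemma~\ref{lem:errorloc1}, with $c_i=u(x_i)$, $\varphi_i=\phi_i^{\loc}$, $v=\uin$ and $w=\psi$. Applying \eqref{eq:intermcomp2} gives
\begin{equation}
\|\uin-\psi\|_{\H^1_0(\Omega)}\leq C N \big\|\diiv(a\nabla \uin)\big\|_{L^2(\Omega)}\,\max_{i\in\cN}\|\phi_i-\phi_i^{\loc}\|_{\H^1_0(\Omega)}.
\end{equation}
It then remains to bound $\|\diiv(a\nabla \uin)\|_{L^2(\Omega)}$ by $C\|g\|_{L^2(\Omega)}$; this is precisely the minimizing property of Theorem~\ref{lem:minimizingproperty} already used in the proof of Corollary~\ref{lem:interpolationerror} (inequalities \eqref{eqn:311}--\eqref{eqn:312}), since $\uin$ minimizes $\|\cdot\|_V$ among all $w\in V$ with $w(x_i)=u(x_i)$, and $u$ itself is such a competitor with $\|u\|_V=\|g\|_{L^2(\Omega)}$. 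Combining the three estimates and invoking Céa's lemma (with a constant depending only on $\lambda_{\min}(a),\lambda_{\max}(a)$) yields \eqref{eq:errorlocalbasis}.

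There is one subtlety to check: the elements $\phi_i^{\loc}$ are defined to vanish outside $\Omega_i$, so they are genuinely elements of $\H^1_0(\Omega)$ and $\psi$ is an admissible competitor in the global Galerkin problem — this is what makes the Céa-type comparison legitimate and is also why Lemma~\ref{lem:errorloc1} applies verbatim with $\varphi_i=\phi_i^{\loc}$. The main obstacle, such as it is, is bookkeeping rather than conceptual: one must be careful that the factor $N$ appearing in Lemma~\ref{lem:errorloc1} is genuinely present in \eqref{eq:errorlocalbasis} (it is, and it is this factor that later forces the localization subdomains $\Omega_i$ to have diameter $\mathcal{O}(H\ln(1/H))$ so that $\max_i\|\phi_i-\phi_i^{\loc}\|_{\H^1_0(\Omega)}$ decays fast enough to beat the $N\sim H^{-d}$ growth). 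No new analytic input beyond Corollary~\ref{lem:interpolationerror}, Theorem~\ref{lem:minimizingproperty}, Lemma~\ref{lem:errorloc1}, and the uniform ellipticity bound is needed.
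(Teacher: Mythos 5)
Your proof is correct and follows essentially the same route as the paper's: introduce the local interpolant $\psi=\sum_i u(x_i)\phi_i^{\loc}$ (the paper calls it $u^{\rm in,\loc}$), split via the triangle inequality through the global interpolant $\uin$, bound the first piece by Corollary~\ref{lem:interpolationerror}, the second by Lemma~\ref{lem:errorloc1} combined with the minimizing property of Theorem~\ref{lem:minimizingproperty}, and finish with C\'ea's lemma. The remark that $\phi_i^{\loc}\in\H^1_0(\Omega)$ (so $\psi$ is a legitimate competitor) is a useful explicit observation that the paper leaves implicit, but the argument is otherwise the same.
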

\begin{proof}
Let $\uin$ be the interpolation of the solution of \eqref{eqn:scalar} over the elements $\{\phi_i,\, i=1,\ldots,N\}$ identified in \eqref{eqn:convexopt}.
 Let $u^{\rm in,\loc}$ be the interpolation of the solution of \eqref{eqn:scalar} over the elements $\{\phi_i^{\loc},\, i=1,\ldots,N\}$ identified in \eqref{eqn:convexoptlocal}.
 Using the triangle inequality we have
 \begin{equation}
	\label{eq:convtriangleis}
  \|u-u^{\rm in,\loc}\|_{\H^1_0(\Omega)} \leq  \|u-\uin\|_{\H^1_0}+ \|\uin-u^{\rm in,\loc}\|_{\H^1_0(\Omega)}.
 \end{equation}
Using Corollary  \ref{lem:interpolationerror} and Lemma \ref{lem:errorloc1} we deduce that
  \begin{equation}
	\label{eq:conhggngleis}
  \|u-u^{\rm in,\loc}\|_{\H^1_0(\Omega)} \leq  C H \|g\|_{L^2(\Omega)}   + C N \max_{i\in \cN}\|\phi_i-\phi_i^{\loc}\|_{\H^1_0(\Omega)}\big\|\diiv(a\nabla \uin)\big\|_{L^2(\Omega)}.
 \end{equation}
 Using the minimization property given in Theorem \ref{lem:minimizingproperty} we have
   \begin{equation}
\big\|\diiv(a\nabla \uin)\big\|_{L^2(\Omega)}\leq \big\|\diiv(a\nabla u)\big\|_{L^2(\Omega)}=\|g\|_{L^2(\Omega)}.
 \end{equation}
 We deduce that
   \begin{equation}
	\label{eq:ceddegngleis}
  \|u-u^{\rm in,\loc}\|_{\H^1_0(\Omega)} \leq  C \|g\|_{L^2(\Omega)} \Big( H  +  N \max_{i\in \cN}\|\phi_i-\phi_i^{\loc}\|_{\H^1_0(\Omega)}\Big).
 \end{equation}
 We conclude the proof using the fact that $u^{H,\loc}$ minimizes $\int_{\Omega}(\nabla u-\nabla v) a (\nabla u-\nabla v)$ over all $v$ in the linear space spanned by the elements $\{\phi_i^{\loc},\, i=1,\ldots,N\}$.
\end{proof}

\subsection{Identification of the difference between the global and the localized basis}
In this subsection we will represent  $\phi_i-\phi_i^{\loc}$ as the solution of a fourth order PDE.
We will assume that $\partial \Omega_i \cap \Omega$ is at some strictly positive distance from the set of coarse nodes, i.e. writing
\begin{equation}\label{eq:defdelta_i}
\delta_i:=\inf_{x\in \partial \Omega_i\cap \Omega,\, j\in \cN} \|x-x_j\|
\end{equation}
we assume that $\delta_i>0$. Although the construction of the localized basis $\varphi_i^{\loc}$ does not require $\delta_i$ to be strictly positive our proof of the accuracy of the localized basis will require that $\delta_i$ is uniformly bounded from below by some arbitrary power of $H$ (i.e., $C\,H^k$ for some $C>0$ and $k\geq 1$).  More precisely, our (final and simplified) a posteriori error estimates (Theorem \ref{thm:errorbasisloc}) will be given under the assumption that
$\min_{i\in \cN}\delta_i\geq H_{\min}/10$ where
\begin{equation}\label{eq:defhmin}
H_{\min}:=\min_{i,j \in \cN}\|x_i-x_j\|.
\end{equation}
 and that the assumption that $\max_{i\in \cN}\delta_i\leq H \leq 1$ (we also make these assumptions to simplify our expressions, note that there is
no loss of generality here since $\Omega$ can be re-scaled to have a diameter bounded by $1$).

Let $f\in H^{-1}(\Omega)$ and consider the system of equations with unknowns  $c\in \R^N$ and $\chi \in V_0$  (where $V_0$ is defined in \eqref{eq:hiuhiu33})
\begin{equation}\label{eq:odhuih}
\begin{cases}
\diiv\Big(a\nabla \big(\diiv (a\nabla \chi)\big)\Big)=f+\sum_{i=1}^N c_i \delta(x-x_i) &\text{ on } \Omega\\
\chi=\diiv(a\nabla \chi)=0 &\text{ on } \partial \Omega\\
\chi=0 &\text{ on }\{x_j\,:\,j\in\cN\}.
\end{cases}
\end{equation}

\begin{Proposition}\label{prop:idiuuhueh3}
Equation \eqref{eq:odhuih} admits a unique solution $\chi \in V_0$ given by
\begin{equation}\label{eq:guyg22}
\chi(x)=\chi^1(x)+\chi^2(x)
\end{equation}
where
\begin{equation}
\chi^1(x)=\int_{\Omega} \tau(x,y) f(y)\,dy
\end{equation}
and
\begin{equation}
\chi^2(x)=-\sum_{k=1}^N\sum_{j=1}^N \tau(x,x_j) P_{j,k} \chi^1(x_k).
\end{equation}
\end{Proposition}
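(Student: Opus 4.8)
The plan is to verify directly that the pair $(\chi,c)$ with $\chi=\chi^1+\chi^2$ as in the statement and $c_j:=-\sum_{k=1}^N P_{j,k}\chi^1(x_k)$ solves \eqref{eq:odhuih}, and then to prove uniqueness by reducing the homogeneous problem to the invertibility of $\Theta$. The key preliminary observation is that, by symmetry of $P$ and the definition \eqref{eq:phiidef} of the basis functions,
\begin{equation*}
\chi^2(x)=-\sum_{k=1}^N\Big(\sum_{j=1}^N P_{k,j}\,\tau(x,x_j)\Big)\chi^1(x_k)=-\sum_{k=1}^N\chi^1(x_k)\,\phi_k(x),
\end{equation*}
so that $\chi$ is simply $\chi^1$ minus its interpolation over the basis $(\phi_k)_{k\in\cN}$.

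First I would record that $\chi^1$ is a well-defined element of $V$: writing $\mathcal{T}$ for the solution operator of $-\diiv(a\nabla\cdot)$ with zero Dirichlet data, one has $\chi^1=\mathcal{T}(\mathcal{T}f)$ for $f\in H^{-1}(\Omega)$, hence $\diiv(a\nabla\chi^1)=-\mathcal{T}f\in\H^1_0(\Omega)\subset L^2(\Omega)$, both $\chi^1$ and $\diiv(a\nabla\chi^1)$ vanish on $\partial\Omega$, the point values $\chi^1(x_k)$ are well defined since elements of $V$ are H\"older continuous for $d\le 3$, and $\diiv\big(a\nabla(\diiv(a\nabla\chi^1))\big)=f$ by \eqref{eq:odhuippokoh}; equivalently $\chi^1(x)=\int_\Omega\tau(x,y)f(y)\,dy$. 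Then I would check the three requirements in \eqref{eq:odhuih}: since $\phi_k(x_j)=\delta_{k,j}$ we get $\chi(x_j)=\chi^1(x_j)-\sum_k\chi^1(x_k)\phi_k(x_j)=0$ for all $j\in\cN$, and since by \eqref{eq:kuhishiu3e} each $\phi_k\in V$ satisfies $\phi_k=\diiv(a\nabla\phi_k)=0$ on $\partial\Omega$, the clamped boundary conditions carry over to $\chi$; thus $\chi\in V_0$. For the interior equation I would apply $\diiv(a\nabla\cdot)$ to the identity $-\diiv(a\nabla\phi_k)=\sum_j P_{k,j}G(\cdot,x_j)$ from \eqref{eq:phijkkijoijiidef} and use $-\diiv(a\nabla G(\cdot,x_j))=\delta(\cdot-x_j)$ to get $\diiv\big(a\nabla(\diiv(a\nabla\phi_k))\big)=\sum_j P_{k,j}\delta(\cdot-x_j)$; combined with $\diiv\big(a\nabla(\diiv(a\nabla\chi^1))\big)=f$ this gives
\begin{equation*}
\diiv\Big(a\nabla\big(\diiv(a\nabla\chi)\big)\Big)=f-\sum_{j=1}^N\Big(\sum_{k=1}^N P_{j,k}\chi^1(x_k)\Big)\delta(\cdot-x_j)=f+\sum_{j=1}^N c_j\,\delta(\cdot-x_j),
\end{equation*}
which is exactly \eqref{eq:odhuih}, proving existence.

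For uniqueness, suppose $(\chi,c)$ and $(\chi',c')$ both solve \eqref{eq:odhuih}; put $\psi:=\chi-\chi'\in V_0$ and $b:=c-c'\in\R^N$, so $\diiv\big(a\nabla(\diiv(a\nabla\psi))\big)=\sum_i b_i\delta(\cdot-x_i)$ with $\psi=\diiv(a\nabla\psi)=0$ on $\partial\Omega$ and $\psi(x_j)=0$ for all $j$. Solving the two nested Dirichlet problems in turn: $w:=\diiv(a\nabla\psi)\in L^2(\Omega)$ satisfies $-\diiv(a\nabla w)=-\sum_i b_i\delta(\cdot-x_i)$ with $w=0$ on $\partial\Omega$, hence $w=-\sum_i b_i G(\cdot,x_i)$; then $-\diiv(a\nabla\psi)=\sum_i b_i G(\cdot,x_i)$ with $\psi=0$ on $\partial\Omega$, so by \eqref{eq:ugwygwugye} $\psi=\sum_i b_i\tau(\cdot,x_i)$. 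The node constraints then read $0=\psi(x_j)=\sum_i b_i\tau(x_j,x_i)=(\Theta b)_j$, and since $\Theta$ is positive definite by Lemma \ref{lem:Theta} we get $b=0$ and therefore $\psi=0$, i.e. $(\chi,c)=(\chi',c')$. I expect the only mildly delicate points to be the regularity bookkeeping that gives meaning to $\chi^1$ and its point values when $f$ is merely in $H^{-1}(\Omega)$, and the justification that these nested elliptic problems can be inverted within the $L^2/V$ framework (uniqueness of an $L^2$, zero-trace, $a$-harmonic function) — both handled by the Green's function bounds established in the proof of Lemma \ref{lem:Theta}.
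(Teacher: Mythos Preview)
Your proof is correct and follows essentially the same approach as the paper: both arguments use that $\tau$ is the fundamental solution of $\big(\diiv(a\nabla\cdot)\big)^2$ with the clamped boundary conditions to represent $\chi$, and then invoke the invertibility of $\Theta$ (Lemma~\ref{lem:Theta}) to pin down the coefficients $c$. The only cosmetic difference is organizational---the paper writes down the general solution $\chi=\int_\Omega\tau(\cdot,y)f(y)\,dy+\sum_j c_j\tau(\cdot,x_j)$ first and then solves the linear system $\Theta c=-\big(\chi^1(x_i)\big)_i$ for $c$, whereas you verify existence and uniqueness separately and recast $\chi^2$ as the interpolant $-\sum_k\chi^1(x_k)\phi_k$; the content is the same.
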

\begin{proof}
Noting that $\tau$ is the fundamental solution of \eqref{eq:odhuippokoh} we obtain that (for $c\in \R^N$) $\chi$ is the unique solution of
\begin{equation}\label{eq:odhuilklkh}
\begin{cases}
\diiv\Big(a\nabla \big(\diiv (a\nabla \chi)\big)\Big)=f+\sum_{i=1}^N c_i \delta(x-x_i) &\text{ on } \Omega\\
\chi=\diiv(a\nabla \chi)=0 &\text{ on } \partial \Omega
\end{cases}
\end{equation}
if and only if
\begin{equation}
\chi(x)=\int_{\Omega} \tau(x,y) f(y)\,dy+\sum_{j=1}^N \tau(x,x_j) c_j.
\end{equation}
The additional constraint
\begin{equation}\label{eq:odhuilklkhbis}
\chi=0 \text{ on }\{x_j\}_{j\in\cN}
\end{equation}
 is then equivalent to
\begin{equation}
\chi^1(x_i)+\sum_{j=1}^N \tau(x_i,x_j) c_j=0
\end{equation}
which (recalling that $\tau(x_i,x_j)=\Theta_{i,j}$ and that $P=\Theta^{-1}$) admits the unique solution $c=-P (\chi^1(x_i))_{i\in \cN}$.
Note that $\chi^1$ has well defined values at the nodes $(x_j)_{j\in \cN}$ since, by \cite{St:1965, GiTr:1983}, it is  H\"{o}lder continuous over $\Omega$.
\end{proof}

Consider the equation
\begin{equation}\label{eq:geuyg3ee}
\begin{cases}
\diiv\Big(a\nabla \big(\diiv (a\nabla \chi)\big)\Big)=-\diiv\Big(a\nabla \big(\diiv (a\nabla \phi_i^{\loc})\big)\Big)+\sum_{i=1}^N c_i \delta(x-x_i) \text{ on } \Omega \\
\chi=\diiv(a\nabla \chi)=0 \text{ on } \partial \Omega\\
\chi=0 \text{ on }\{x_j\,:\, j\in \cN\}.
\end{cases}
\end{equation}
We will show in the proof of Lemma \ref{lem:lemidediff} that \eqref{eq:geuyg3ee} admits a unique solution. We will denote that solution by $\chi_i$ and obtain (in Lemma \ref{lem:lemidediff}) a representation formula for it.

\begin{Lemma}\label{lem:lemidediff}
It holds true that
\begin{equation}\label{eq:yguydgu3ygee}
\phi_i=\phi_i^{\loc}+\chi_i
\end{equation}
where $\chi_i=\chi_i^1+\chi_i^2$ with ($n$ is the unit surface vector oriented towards the exterior of $\Omega_i$)
\begin{equation}\label{eq:chi1}
\chi_i^1(x):=\int_{\partial \Omega_i \cap \Omega}\tau(x,y) n\cdot a \nabla \big(\diiv (a\nabla \phi_i^{\loc})\big)\,dy
-\int_{\partial \Omega_i \cap \Omega} G(x,y) n\cdot a\nabla \phi_i^{\loc}\,dy
\end{equation}
and
\begin{equation}\label{eq:chi1bis}
\chi_i^2(x):=-\sum_{k=1}^N\sum_{j=1}^N \tau(x,x_j) P_{j,k} \chi_i^1(x_k).
\end{equation}
\end{Lemma}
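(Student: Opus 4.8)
The plan is to show that $\phi_i - \phi_i^{\loc}$ solves the fourth-order problem \eqref{eq:geuyg3ee} for a suitable $c\in\R^N$, and then invoke Proposition \ref{prop:idiuuhueh3} to obtain the representation formula. First I would recall, from the Remark following Theorem \ref{lem:minimizingproperty} (specifically \eqref{eq:duu3iuhee} and \eqref{eq:kuhishiu3e}), that the global basis function $\phi_i$ satisfies $\diiv(a\nabla(\diiv(a\nabla\phi_i)))=\sum_{j=1}^N c_j^{(i)}\delta(x-x_j)$ on $\Omega$, together with $\phi_i=\diiv(a\nabla\phi_i)=0$ on $\partial\Omega$ and $\phi_i(x_j)=\delta_{i,j}$. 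The localized function $\phi_i^{\loc}$, extended by zero outside $\Omega_i$, is biharmonic (in the $a$-sense) inside $\Omega_i\setminus\{x_j\}$, vanishes with its flux-type trace on $\partial\Omega_i\cap\Omega$ in the weak sense dictated by \eqref{eqn:convexoptlocal}, and vanishes on the coarse nodes in $\cN(\Omega_i)$; but because it is truncated to zero across $\partial\Omega_i\cap\Omega$, the distribution $\diiv(a\nabla(\diiv(a\nabla\phi_i^{\loc})))$ acquires, in addition to the node masses inside $\Omega_i$, two layers of singular terms supported on $\partial\Omega_i\cap\Omega$: a single layer carrying $n\cdot a\nabla\phi_i^{\loc}$ (from the jump in $\diiv(a\nabla\phi_i^{\loc})$, equivalently in the first-order part) and a "double-layer-like" term coming from the jump in the flux $n\cdot a\nabla(\diiv(a\nabla\phi_i^{\loc}))$. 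Tracking these boundary contributions carefully is the crux of the argument.

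Next I would set $\chi:=\phi_i-\phi_i^{\loc}$ (with $\phi_i^{\loc}$ understood as its zero extension) and compute $\diiv(a\nabla(\diiv(a\nabla\chi)))$ as a distribution on $\Omega$. The interior node masses at the $x_j$ from $\phi_i$ and from $\phi_i^{\loc}$ can be absorbed into the free vector $c\in\R^N$ in \eqref{eq:geuyg3ee}; what remains is exactly $-\diiv(a\nabla(\diiv(a\nabla\phi_i^{\loc})))$ plus the combined Dirac masses, which is the right-hand side of \eqref{eq:geuyg3ee} once one recognizes that $-\diiv(a\nabla(\diiv(a\nabla\phi_i^{\loc})))$, as a distribution on all of $\Omega$, equals the boundary layers on $\partial\Omega_i\cap\Omega$ plus node terms. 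One must also check the boundary conditions on $\partial\Omega$: both $\phi_i$ and (the extension of) $\phi_i^{\loc}$ vanish there together with the relevant $a$-flux trace, so $\chi=\diiv(a\nabla\chi)=0$ on $\partial\Omega$; and $\chi=0$ on $\{x_j:j\in\cN\}$ since $\phi_i(x_j)=\delta_{i,j}=\phi_i^{\loc}(x_j)$ for $j\in\cN(\Omega_i)$ and both vanish at the remaining nodes (here one uses that $\phi_i^{\loc}\equiv0$ outside $\Omega_i$ and that nodes outside $\Omega_i$ are at positive distance $\delta_i>0$ from $\partial\Omega_i\cap\Omega$). Thus $\chi$ solves \eqref{eq:geuyg3ee}, and Proposition \ref{prop:idiuuhueh3} gives uniqueness and the split $\chi_i=\chi_i^1+\chi_i^2$ with $\chi_i^1(x)=\int_\Omega\tau(x,y)\tilde f(y)\,dy$, where $\tilde f=-\diiv(a\nabla(\diiv(a\nabla\phi_i^{\loc})))$.

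Finally I would convert $\chi_i^1$ into the stated boundary-integral form \eqref{eq:chi1}. Using that $\tau(\cdot,y)$ is the fundamental solution of the operator $\diiv(a\nabla(\diiv(a\nabla\cdot)))$ with the boundary conditions in \eqref{eq:odhuippokoh}, and that $\phi_i^{\loc}$ is $a$-biharmonic in $\Omega_i$ away from the nodes while vanishing outside $\Omega_i$, I would integrate $\int_\Omega \tau(x,y)\tilde f(y)\,dy$ by parts twice (a Green's identity for the square operator $\diiv(a\nabla\cdot)\circ\diiv(a\nabla\cdot)$). The interior contributions cancel because $\phi_i^{\loc}$ is $a$-biharmonic in $\Omega_i$, and the bulk terms from $\Omega\setminus\Omega_i$ vanish because $\phi_i^{\loc}\equiv0$ there; only the surface terms on $\partial\Omega_i\cap\Omega$ survive, yielding precisely the single layer against $G$ (since $\diiv(a\nabla\tau)=-G$ by \eqref{eq:ugwygwugye}) and the term against $\tau$ with the flux $n\cdot a\nabla(\diiv(a\nabla\phi_i^{\loc}))$, with the orientation $n$ pointing out of $\Omega_i$ as stated. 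The term $\chi_i^2$ is then read off directly from Proposition \ref{prop:idiuuhueh3}, namely $\chi_i^2(x)=-\sum_{k=1}^N\sum_{j=1}^N\tau(x,x_j)P_{j,k}\chi_i^1(x_k)$. The main obstacle is the distributional bookkeeping at the interface $\partial\Omega_i\cap\Omega$: justifying the integration by parts and correctly identifying the two singular layers produced by truncating $\phi_i^{\loc}$, given that $a$ is merely $L^\infty$ so $\phi_i^{\loc}$ is only Hölder continuous and its $a$-flux traces must be interpreted in a weak sense; the hypothesis $\delta_i>0$ is what keeps these interface terms separated from the node singularities and makes the node evaluations $\chi_i^1(x_k)$ meaningful.
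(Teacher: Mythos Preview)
Your approach is essentially the paper's ``alternate proof'': identify the distributional right-hand side produced by the truncation of $\phi_i^{\loc}$ across $\partial\Omega_i\cap\Omega$, recognize that $\chi=\phi_i-\phi_i^{\loc}$ solves \eqref{eq:geuyg3ee}, and invoke Proposition~\ref{prop:idiuuhueh3}. The paper in fact writes out the interface distribution exactly as you describe,
\[
f = n\cdot a\nabla\big(\diiv(a\nabla\phi_i^{\loc})\big)\,\delta_{\partial\Omega_i\cap\Omega}
+ \diiv\big(a\nabla(n\cdot a\nabla\phi_i^{\loc}\,\delta_{\partial\Omega_i\cap\Omega})\big),
\]
and then reads off \eqref{eq:chi1}--\eqref{eq:chi1bis} from Proposition~\ref{prop:idiuuhueh3}.

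There is, however, a genuine gap that you flag but do not close: Proposition~\ref{prop:idiuuhueh3} is stated and proved for $f\in H^{-1}(\Omega)$, whereas the interface distribution above lies only in $H^{-3}(\Omega)$ (the second term involves two derivatives of a surface Dirac). The paper is explicit that the direct application of Proposition~\ref{prop:idiuuhueh3} is therefore not self-contained, and it supplies a separate ``initial proof'' by duality to justify it. That argument takes an arbitrary test source $\bar f\in H^{-1}(\Omega)$, lets $\bar\chi$ be the corresponding solution of \eqref{eq:odhuih}, and computes
\[
\int_\Omega (\phi_i-\phi_i^{\loc})\,\bar f
= \int_\Omega (\phi_i-\phi_i^{\loc})\,\diiv\big(a\nabla(\diiv(a\nabla\bar\chi))\big)
\]
by two integrations by parts against the \emph{smooth} object $\bar\chi$ (rather than against $\phi_i^{\loc}$), picking up the surface terms on $\partial\Omega_i\cap\Omega$ and then substituting the representation \eqref{eq:guyg22} for $\bar\chi$. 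This sidesteps the low-regularity issue entirely, since all integrations by parts are performed on functions with enough regularity. Your plan would be complete if you either reproduce this duality step or otherwise extend Proposition~\ref{prop:idiuuhueh3} to $f\in H^{-3}(\Omega)$; as written, the ``distributional bookkeeping'' you mention is precisely the point where the argument needs more than an invocation of Proposition~\ref{prop:idiuuhueh3}.
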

\begin{proof}
{\bf Initial proof.}
Let $\bar{f}$ be an arbitrary element of $\H^{-1}(\Omega)$ and $\bar{\chi}$ the solution of \eqref{eq:odhuilklkh} and \eqref{eq:odhuilklkhbis}.
We have (using $\phi_i -\phi_i^{\loc}=0$ on $\{x_j\,:\, j\in \cN\}$)
\begin{equation}
\int_{\Omega} (\phi_i -\phi_i^{\loc}) \bar{f}=\int_{\Omega} (\phi_i -\phi_i^{\loc}) \diiv\Big(a\nabla \big(\diiv (a\nabla \bar{\chi})\big)\Big).
\end{equation}
Integrating by parts we deduce that
\begin{equation}
\int_{\Omega} (\phi_i -\phi_i^{\loc}) \bar{f}=\int_{\Omega \setminus \partial \Omega_i}\diiv\big(a\nabla  (\phi_i -\phi_i^{\loc})\big) \diiv(a\nabla \bar{\chi})
+\int_{\partial \Omega_i\cap \Omega} \diiv(a\nabla  \bar{\chi})\, n\cdot a\nabla \phi_i^{\loc}.
\end{equation}
Integrating by parts again (using the continuity of $\diiv(a\nabla \phi_i^{\loc})$  across $\partial \Omega_i \cap \Omega$ and the fact that $\bar{\chi}$ is null on the  nodes $x_j$) we have
\begin{equation}
\int_{\Omega \setminus \partial \Omega_i}\diiv\big(a\nabla  (\phi_i -\phi_i^{\loc})\big) \diiv(a\nabla \bar{\chi})
=\int_{\partial \Omega_i \cap \Omega}\bar{\chi} \,n\cdot a\nabla \diiv\big(a\nabla  \phi_i^{\loc}\big).
\end{equation}
Replacing  $\bar{\chi}$ by the solution expressed in \eqref{eq:guyg22}, i.e.
\begin{equation}\label{eq:gnklknuyg22}
\bar{\chi}(x)=\int_{\Omega} \tau(x,y) \bar{f}(y)\,dy-\sum_{k=1}^N\sum_{j=1}^N \tau(x,x_j) P_{j,k} \int_{\Omega} \tau(x_k,y) \bar{f}(y)\,dy
\end{equation}
 we obtain that
\begin{equation}
\begin{split}
\int_{\Omega} (\phi_i -\phi_i^{\loc}) \bar{f}=&-\int_{y\in \Omega}\int_{x\in \partial \Omega_i\cap \Omega} G(x,y)\bar{f}(y)\, n\cdot a\nabla \phi_i^{\loc}(x)\,dy\,dx\\
&+\int_{y\in \Omega}\int_{x\in \partial \Omega_i\cap \Omega} \tau(x,y)\bar{f}(y) \,n\cdot a\nabla \diiv\big(a\nabla  \phi_i^{\loc}(x)\big) \,dy\,dx\\
-\sum_{k=1}^N\sum_{j=1}^N   P_{j,k}\int_{y\in \Omega}&\int_{x\in \partial \Omega_i\cap \Omega} \tau(x,x_j) \tau(x_k,y)\bar{f}(y) \,n\cdot a\nabla \diiv\big(a\nabla  \phi_i^{\loc}(x)\big) \,dy\,dx\\
+\sum_{k=1}^N\sum_{j=1}^N   P_{j,k}\int_{y\in \Omega}&\int_{x\in \partial \Omega_i\cap \Omega} G(x,x_j) \tau(x_k,y)\bar{f}(y) \, n\cdot a\nabla \phi_i^{\loc}(x) \,dy\,dx
\end{split}
\end{equation}
which leads us to \eqref{eq:yguydgu3ygee}.

{\bf Alternate proof.}
For the sake of clarity we will now also give an intuitive proof of \eqref{eq:yguydgu3ygee} based on a direct application
of  Proposition \ref{prop:idiuuhueh3}.
Note that since $\phi_i^{\loc}$ and $\diiv(a\nabla \phi_i^{\loc})$ are continuous across $\partial \Omega_i \cap \Omega$ we have (using the representation formulae \eqref{eq:phiidef} and \eqref{eq:phiidefloc}) that
\begin{equation}
\diiv\Bigg(a\nabla \Big(\diiv \big(a\nabla (\phi_i-\phi_i^{\loc})\big)\Big)\Bigg)=f+\sum_{i=1}^N c_i \delta(x-x_i)
\end{equation}
where
\begin{equation}
f=n\cdot a \nabla \big(\diiv (a\nabla \phi_i^{\loc})\big) \delta_{\partial \Omega_i \cap \Omega}+ \diiv\Big(a\nabla \big(n\cdot a\nabla \phi_i^{\loc}    \delta_{\partial \Omega_i \cap \Omega} \big)\Big)
\end{equation}
and $\delta_{\partial \Omega_i \cap \Omega}$ is the Dirac (surface) distribution  on $\partial \Omega_i \cap \Omega$.
We conclude by using Proposition \ref{prop:idiuuhueh3} and integrating by parts. Note that although Proposition \ref{prop:idiuuhueh3} requires $f\in \H^{-1}(\Omega)$, our initial proof shows that it can  still be applied here (with $f\in \H^{-3}(\Omega)$) to show \eqref{eq:yguydgu3ygee}. Note that our assumption  that  $\partial \Omega_i\cap \Omega$ (the support of $f$) is at some strictly positive distance from the coarse nodes $(x_j)_{j\in \cN}$ (i.e., $\delta_i>0$) implies $\chi_i^1$ has well defined values at the coarse nodes $(x_j)_{j\in \cN}$ (since $\diiv(a\nabla \chi_i^1)$ is in $\H^1$ in a neighborhood of those nodes).
\end{proof}

\subsection{Reverse Poincar\'{e} inequality}
The following lemma is a generalization of the classical Caccioppoli's inequality (usually referred to as the reversed Poincar\'{e} inequality, see \cite{Giaquinta:1983} and Proposition 1.4.1 of \cite{Moser:2012}). We will remind its statement and proof for the sake of completeness.

\begin{Lemma}\label{lem:caccop}
Let $D_1, D_2$ be open subsets of $D_0$ such that $D_2 \subset D_1$ (and such that $D_0\setminus D_1$ is non-empty). If $v\in \H^1(D_0)$ satisfies $\diiv(a\nabla v)=0$ in $D_1$, and $v=0$ on $ \partial D_0$, then
\begin{equation}
\|\nabla v \|_{L^2(D_2)} \leq \frac{C}{\dist(D_2,D_0\setminus D_1)} \|v \|_{L^2(D_1)}+\Big(\big\|\diiv( a \nabla v)\big\|_{L^2(D_1)} \|v\|_{L^2(D_1)}\Big)^\frac{1}{2}.
\end{equation}
\end{Lemma}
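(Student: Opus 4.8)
The plan is to reproduce the classical Caccioppoli (reverse Poincar\'{e}) argument, adapted to the variable coefficient matrix $a$ and keeping $\diiv(a\nabla v)$ as a source term throughout the estimate.

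The first step is the choice of cutoff. Set $\rho:=\dist(D_2,D_0\setminus D_1)$; we may assume $\rho>0$, for otherwise the right-hand side of the asserted inequality is infinite and there is nothing to prove. Define on $D_0$ the Lipschitz function
\[
\eta(x):=\Big(1-\rho^{-1}\dist(x,D_2)\Big)_{+},
\]
so that $0\le\eta\le1$, $\eta\equiv1$ on $D_2$, $|\nabla\eta|\le\rho^{-1}$ almost everywhere, and, crucially, $\eta\equiv0$ on $D_0\setminus D_1$; the last property forces $\{\eta>0\}$ to lie at positive distance from $\partial D_1\cap D_0$. Since $\partial D_1\subset D_0\cup\partial D_0$ and $v=0$ on $\partial D_0$, the function $\eta^{2}v$ vanishes in a neighbourhood of every part of $\partial D_1$, hence is an admissible test function for the equation on $D_1$: multiplying by $\eta^{2}v$ and integrating by parts over $D_1$ produces no boundary contribution.

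The second step is the energy identity and its estimation. Writing $f:=\diiv(a\nabla v)\in L^{2}(D_1)$ (which vanishes by hypothesis, but which I keep general, since the same computation handles arbitrary $f\in L^{2}(D_1)$ — the form needed in the following subsections), integration by parts yields
\[
\int_{D_1}\eta^{2}\,(\nabla v)^{T}a\nabla v \;=\; -2\int_{D_1}\eta\,v\,(\nabla v)^{T}a\nabla\eta\;-\;\int_{D_1}\eta^{2}\,v\,f .
\]
I would bound the left side from below by $\lambda_{\min}(a)\int_{D_1}\eta^{2}|\nabla v|^{2}$ via \eqref{skjdhskhdkh3e}; bound the first term on the right by Cauchy--Schwarz together with \eqref{skjdhskhdkh3e} and Young's inequality, absorbing $\tfrac12\lambda_{\min}(a)\int_{D_1}\eta^{2}|\nabla v|^{2}$ into the left side and leaving a remainder controlled by $C\rho^{-2}\|v\|_{L^{2}(D_1)}^{2}$; and bound the second term by Cauchy--Schwarz with $\eta\le1$ by $\|f\|_{L^{2}(D_1)}\|v\|_{L^{2}(D_1)}$. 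Collecting these estimates and using $\eta\equiv1$ on $D_2\subset D_1$,
\[
\|\nabla v\|_{L^{2}(D_2)}^{2}\;\le\;\frac{C}{\rho^{2}}\|v\|_{L^{2}(D_1)}^{2}\;+\;C\,\|f\|_{L^{2}(D_1)}\|v\|_{L^{2}(D_1)} ,
\]
with $C$ depending only on $\lambda_{\min}(a)$ and $\lambda_{\max}(a)$. Taking square roots and using $\sqrt{x+y}\le\sqrt{x}+\sqrt{y}$ then gives the asserted inequality (ellipticity constants absorbed into $C$ in the paper's convention; under the stated hypothesis the last term is zero).

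The only genuine obstacle is the first step — justifying that $\eta^{2}v$ is an admissible test function, i.e. that no boundary term survives the integration by parts. This is exactly where the hypotheses that $D_0\setminus D_1$ be non-empty and that $v=0$ on $\partial D_0$ are used, and the explicit distance-function cutoff settles it cleanly; everything else is routine ellipticity bookkeeping.
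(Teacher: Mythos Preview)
Your proof is correct and follows essentially the same Caccioppoli argument as the paper: choose a cutoff $\eta$ supported in $D_1$ with $\eta\equiv 1$ on $D_2$ and $|\nabla\eta|\le C/\dist(D_2,D_0\setminus D_1)$, test the equation against $\eta^2 v$, and estimate the resulting energy identity. The only cosmetic difference is that you absorb the cross term via Young's inequality and then take a square root, whereas the paper keeps the cross term in the form $Cx\rho^{-1}\|v\|_{L^2}$ with $x=(\int\eta^2\nabla v\,a\nabla v)^{1/2}$ and invokes the elementary implication $x^2\le yx+z\Rightarrow x\le y+\sqrt{z}$; both routes yield the same bound. You also correctly observe that the argument works for general $f=\diiv(a\nabla v)\in L^2(D_1)$, which is the form actually used in the subsequent applications of the lemma.
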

\begin{proof}
The idea of the proof is as follows, choose $\eta$ to be a differentiable function on $D_0$ such that $\eta=1$ on $D_2$,  $\eta=0$ on $D_0\setminus D_1$, $0\leq \eta \leq 1$ and $\|\nabla \eta\|_{L^\infty(D_0)}\leq \frac{C}{\dist(D_2,D_0\setminus D_1)}$. Using the fact that $\eta^2 v=0$ on $\partial D_1$
 we obtain that
\begin{equation}
\int_{D_1}\nabla (\eta^2 v) a \nabla v=-\int_{D_1} \eta^2 v \diiv( a \nabla v)
\end{equation}
which leads to
\begin{equation}
\int_{D_1}\eta^2 \nabla v a \nabla v= -2\int_{D_1}\eta v \nabla \eta a \nabla v -\int_{D_1} \eta^2 v \diiv( a \nabla v).
\end{equation}
Using the Cauchy-Schwartz inequality, the uniform bound on $ \nabla \eta$, and the uniform bound on $a$ we deduce that
\begin{equation}
\begin{split}
\int_{D_1}\eta^2 \nabla v a \nabla v \leq & \frac{C}{\dist(D_2,D_0\setminus D_1)} \big(\int_{D_1}\eta^2 \nabla v a \nabla v \big)^{\frac{1}{2}} \|v\|_{L^2(D_1)}
\\&+ \big\|\diiv( a \nabla v)\big\|_{L^2(D_1)} \|v\|_{L^2(D_1)}.
\end{split}
\end{equation}
Writing $\big(\int_{D_1}\eta^2 \nabla v a \nabla v \big)^{\frac{1}{2}}$, we conclude by using the fact that, for positive $x,y,z$, $x\leq y+z/x$ implies that $x\leq y +\sqrt{z}$
and also using the fact that  $\int_{D_2}\nabla v a \nabla v\leq \int_{D_1}\eta^2 \nabla v a \nabla v$.
\end{proof}

\subsection{Bound on the maximum eigenvalue of $P$}
In this subsection we will provide an upper bound on the maximum eigenvalue of $P$ as defined in \eqref{eq:defP}.

\begin{Proposition}\label{prop:contlammaxp}
Let $\Theta$ be defined as in \eqref{eq:theta:edf}, $P$  as in \eqref{eq:defP}, and $H_{\min}$ as in \eqref{eq:defhmin}. It holds true that
\begin{equation}
\lambda_{\max}(P)\leq C \frac{N}{H_{\min}^4}.
\end{equation}
\end{Proposition}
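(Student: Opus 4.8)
The plan is to reduce the bound on $\lambda_{\max}(P)$ to a bound on the diagonal entries $P_{i,i}$ and then to control each $P_{i,i}$ at scale $H_{\min}$. Since $P$ is symmetric positive definite, $\lambda_{\max}(P)\le\operatorname{Trace}(P)=\sum_{i=1}^N P_{i,i}\le N\max_{i\in\cN}P_{i,i}$; and by \eqref{eqn:orthogjuoddoty} together with \eqref{eq:phijkkijoijiidef}, $P_{i,i}=\<\phi_i,\phi_i\>=\int_\Omega\big(\diiv(a\nabla\phi_i)\big)^2=\big\|\sum_j P_{i,j}G(\cdot,x_j)\big\|_{L^2(\Omega)}^2$. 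Since $\phi_i$ is the minimizer of \eqref{eqn:convexopt}, $P_{i,i}\le\int_\Omega\big(\diiv(a\nabla\psi)\big)^2$ for every competitor $\psi\in V_i$, so it suffices to exhibit, for each $i$, one $\psi\in V_i$ with $\int_\Omega\big(\diiv(a\nabla\psi)\big)^2\le C/H_{\min}^4$.

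The natural candidate is a function supported in the ball $B_i:=B(x_i,H_{\min}/2)$ (which we may take inside $\Omega$, nodes close to $\partial\Omega$ being handled separately, and which automatically vanishes at every other node since $\|x_i-x_j\|\ge H_{\min}$), obtained by rescaling a fixed profile on the unit ball to scale $H_{\min}$: the energy then scales like $H_{\min}^{d-4}\le H_{\min}^{-4}$ (using $d\le 3$ and $H_{\min}\le 1$). The obstruction, flagged in Section \ref{sec:rps}, is that with merely $L^\infty$ coefficients a smooth bump is not in $V$, and for the zero-extension of $\psi$ to $\Omega$ to stay in $V$ one needs both $\psi=0$ and $n\cdot a\nabla\psi=0$ on $\partial B_i$ --- a ``clamped'' condition. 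One can nonetheless construct such a $\psi$ from the local fundamental solution $\tau^{i,\loc}$ on a slightly larger ball: functions built from $\tau^{i,\loc}$ already satisfy $\diiv(a\nabla\tau^{i,\loc}(\cdot,y))=-G^{i,\loc}(\cdot,y)\in L^2$, so a suitable combination of them, corrected by a boundary layer to kill the conormal derivative on $\partial B_i$ and normalized to equal $1$ at $x_i$, does the job; its energy is then an $L^2$ norm of combinations of local Green's functions, bounded via \eqref{eq:boundonGtheta} and the above scaling.

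An alternative that sidesteps the clamped construction is to bound $1/P_{i,i}$ from below. Since $\Theta$ is the $L^2(\Omega)$ Gram matrix of $\{G(\cdot,x_j)\}_{j\in\cN}$ (see \eqref{eq:theta:edf}) and $P=\Theta^{-1}$, the Schur complement formula yields
\[
\frac{1}{P_{i,i}}=\min_{\,l\in\R^N,\ l_i=1}\Big\|\sum_{j=1}^N l_j\,G(\cdot,x_j)\Big\|_{L^2(\Omega)}^2=\dist_{L^2(\Omega)}\big(G(\cdot,x_i),\ \operatorname{span}\{G(\cdot,x_j):j\neq i\}\big)^2,
\]
and the point is that any element $\phi$ of $\operatorname{span}\{G(\cdot,x_j):j\neq i\}$ is $a$-harmonic in $B_i$ (the other nodes being at distance $\ge H_{\min}/2$ from $B_i$), hence by the interior De Giorgi--Nash--Moser estimates it is bounded near $x_i$ by a negative power of $H_{\min}$ times its $L^2$ norm, whereas $G(\cdot,x_i)$ has a non-removable singularity at $x_i$ ($\gtrsim|x-x_i|^{2-d}$, logarithmic for $d=2$, a Lipschitz corner for $d=1$). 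Distinguishing the case where $\|G(\cdot,x_i)-\phi\|_{L^2(\Omega\setminus B_i)}$ is already large from the case where it is small (so that $\phi$ cannot cancel the singular profile of $G(\cdot,x_i)$ on a small ball of radius a power of $H_{\min}$, on which $\int G(\cdot,x_i)^2$ is bounded below), and using $\Theta_{i,i}=\|G(\cdot,x_i)\|_{L^2(\Omega)}^2\le K$ from Lemma \ref{lem:Theta} to make the powers explicit, one obtains $1/P_{i,i}\ge c\,H_{\min}^4$ and hence $\lambda_{\max}(P)\le C N/H_{\min}^4$.

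The main obstacle is precisely this estimate, in either form: producing the clamped competitor for $L^\infty$ coefficients, or, equivalently, proving the quantitative lower bound on the distance, i.e. that the $a$-harmonic far field cannot cancel the local singular profile of $G(\cdot,x_i)$. This needs the interior regularity theory for rough coefficients together with the two-sided Green's-function bounds, carried out uniformly in $d\le 3$ (the $d=2$ logarithmic case being the most delicate) and with separate care for nodes near $\partial\Omega$; the exponent $4$ is deliberately crude (the truth is $\sim H_{\min}^{4-d}$), but it is all that is needed later.
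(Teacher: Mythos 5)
Your reduction is sound as far as it goes: $\lambda_{\max}(P)\le\operatorname{Trace}(P)\le N\max_i P_{i,i}$ is valid, the identity $P_{i,i}=\<\phi_i,\phi_i\>$ from \eqref{eqn:orthogjuoddoty} together with the minimizing property of $\phi_i$ is correct, and the Schur-complement formula $1/P_{i,i}=\min_{l_i=1}l^T\Theta\,l=\dist_{L^2(\Omega)}(G(\cdot,x_i),\operatorname{span}\{G(\cdot,x_j):j\neq i\})^2$ is correct. But the proof stops precisely at the one step that actually carries the content of the proposition, and you say so yourself: neither the ``clamped competitor'' route nor the ``the $a$-harmonic far field cannot cancel the singularity of $G(\cdot,x_i)$'' route is carried out. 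As written, the first route is circular --- the boundary-layer correction you invoke to kill the conormal derivative on $\partial B_i$ with merely $L^\infty$ coefficients is exactly the clamped-boundary difficulty that Section~\ref{sec:rps} flags as a genuine obstruction, so appealing to it does not constitute a proof. The second route, phrased as a case distinction on whether $\|G(\cdot,x_i)-\phi\|_{L^2(\Omega\setminus B_i)}$ is large or small, would require a quantitative two-sided pointwise analysis of $G$ near $x_i$ that is delicate for $d=2$ (log singularity) and essentially absent for $d=1$ (no singularity, only a derivative jump), and no bound with the claimed power of $H_{\min}$ is derived.

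The paper resolves this with a different and more robust mechanism that sidesteps the singularity structure of $G$ entirely. Instead of bounding $P_{i,i}$, it bounds $\lambda_{\min}(\Theta)$ directly: for $\|l\|=1$ one picks $i$ with $|l_i|\ge 1/\sqrt{N}$, sets $v=\sum_j l_j G(\cdot,x_j)$, and observes that $v$ is $a$-harmonic in the annulus $B_{7\rho}\setminus B_{\rho}$ around $x_i$ with $\rho=H_{\min}/8$. Green's identity \eqref{eq:greenidli} gives $\int_{\partial B_{5\rho}} n\cdot a\nabla v=-l_i$, a flux relation that is insensitive to the dimension and to the precise nature of the singularity. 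A cutoff converts this flux into the bound $|l_i|\le\frac{C}{\rho}\|\nabla v\|_{L^2(B_{5\rho}\setminus B_{4\rho})}$, and Caccioppoli's inequality (Lemma~\ref{lem:caccop}) converts $\|\nabla v\|$ into $\frac{C}{\rho}\|v\|_{L^2(B_{6\rho}\setminus B_{3\rho})}$. Chaining these gives $\|v\|_{L^2(\Omega)}\ge C\rho^2|l_i|\ge C\rho^2/\sqrt{N}$, hence $l^T\Theta l\ge C\rho^4/N$ and the claim. Note that this flux--Caccioppoli argument, applied with $l_i=1$ instead of $\|l\|=1$, would also give you exactly the estimate $1/P_{i,i}\ge C H_{\min}^4$ that your Schur-complement reduction needs, so the two reductions are genuinely interchangeable \emph{once} you have the key estimate --- but the key estimate itself is the missing piece in your proposal, and the flux/Green's-identity idea that supplies it does not appear in your argument.
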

\begin{proof}
 The definition \eqref{eq:defP} implies that the maximum eigenvalue of $P$ is the inverse of the minimum eigenvalue of $\Theta$ as defined in \eqref{eq:theta:edf}. Let $l\in \R^{N}$ such that $\|l\|=1$. We wish to bound from below $l^T \Theta l$. Write
\begin{equation}\label{eq:defgreenv}
v(x):=\sum_{j=1}^N G(x,x_i) l_j
\end{equation}
and observe that
\begin{equation}\label{eq:dh3iuhih}
l^T \Theta l= \|v\|_{L^2(\Omega)}^2.
\end{equation}
Since $\|l\|=1$, there exists $i \in \cN$ such that $|l_i|\geq 1/\sqrt{N}$.
Let $B_{\rho}$ be the intersection of $\Omega$ with the ball of center $x_i$ and radius $\rho$.  Let $\rho=H_{\min}/8$.
Observing that $v$ is harmonic in $B_{7\rho}\setminus B_{\rho}$ we have using Caccioppoli's inequality (see lemma \ref{lem:caccop}) that
\begin{equation}\label{eq:hduyg3ug}
\|\nabla v\|_{L^2(B_{5\rho}\setminus B_{4\rho})}\leq \frac{C}{\rho} \| v\|_{L^2(B_{6\rho}\setminus B_{3\rho})}.
\end{equation}
Using Green's identity and \eqref{eq:defgreenv} we have
\begin{equation}\label{eq:greenidli}
\int_{\partial B_{5 \rho}} n\cdot a \nabla v=\int_{B_{5\rho}}\diiv(a\nabla v)=-l_i.
\end{equation}
Let $\eta$ be a smooth function equal to one on $\partial B_{5 \rho}$, zero on $ B_{4 \rho}$ and such that $0\leq \eta \leq 1$ and
$\|\nabla \eta\|_{L^\infty(B_{5 \rho})}\leq C/\rho$. Integrating by parts, we have
\begin{equation}\label{eq:intbrho4andrho5}
\begin{split}
\int_{\partial B_{5 \rho}} n\cdot a \nabla v&=\int_{\partial B_{5 \rho}}\eta n\cdot a \nabla v\\
&= \int_{B_{5 \rho}\setminus B_{4\rho}}\nabla \eta a \nabla v.
\end{split}
\end{equation}
Observing that
\begin{equation}
\begin{split}
\big|\int_{B_{5 \rho}\setminus B_{4\rho}}\nabla \eta a \nabla v\big| \leq \frac{C}{\rho} \|\nabla v\|_{L^2(B_{5 \rho}\setminus B_{4\rho})}
\end{split}
\end{equation}
we obtain from $|l_i|\geq 1/\sqrt{N}$, \eqref{eq:greenidli} and \eqref{eq:intbrho4andrho5} that
\begin{equation}\label{eq:iidg3d}
\frac{1}{\sqrt{N}}\leq  \frac{C}{\rho} \|\nabla v\|_{L^2(B_{5 \rho}\setminus B_{4\rho})}.
\end{equation}
Combining \eqref{eq:iidg3d} with \eqref{eq:hduyg3ug}  we deduce that
\begin{equation}\label{eq:hjjjnduygs3ug}
 C \frac{\rho^2}{\sqrt{N}}\leq  \| v\|_{L^2(B_{6\rho}\setminus B_{3\rho})}.
\end{equation}
Combining \eqref{eq:hjjjnduygs3ug} with  \eqref{eq:dh3iuhih} we deduce that
\begin{equation}
l^T\Theta l\geq C \frac{\rho^4}{N}
\end{equation}
which concludes the proof.
\end{proof}

\subsection{Pointwise estimates on solutions of elliptic equations with discontinuous coefficients}
Write $B(x,\rho)$ the Euclidean ball of center $x$ and radius $\rho$ and $\Omega(x,\rho)$ the intersection of $B(x,\rho)$ with $\Omega$.

\begin{Lemma}\label{lem:lem3controlnew}
Let $v\in \H^1(\Omega)$ such that $\diiv(a\nabla v)\in L^2(\Omega)$, $d\leq 3$ and $x_0\in \Omega$. Then there exists a constant $C$ depending only on $\lambda_{\min}(a)$ and $\lambda_{\max}(a)$ such that
\begin{equation}
\|v\|_{L^\infty(\Omega(x_0,\rho))}\leq \big( \frac{C}{\rho^d}\int_{\Omega(x_0,2\rho)} v^2\big)^\frac{1}{2} + C \rho^{2-\frac{d}{2}}
\big\|\diiv(a\nabla v)\big\|_{L^{2}(\Omega(x_0,\rho))}.
\end{equation}
\end{Lemma}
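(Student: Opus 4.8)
The plan is to split $v$ on the doubled set $D:=\Omega(x_0,2\rho)$ into an $a$-harmonic part plus a part absorbing the $L^2$ source, and to estimate each piece by a scale-invariant De~Giorgi--Nash--Moser bound. Set $f:=\diiv(a\nabla v)\in L^2(\Omega)$, let $v_1\in\H^1_0(D)$ be the (Lax--Milgram) weak solution of $\diiv(a\nabla v_1)=f$ in $D$ with zero trace on $\partial D$, and put $v_0:=v-v_1$, so that $\diiv(a\nabla v_0)=0$ in $D$ and $v=v_0+v_1$. Then $\|v\|_{L^\infty(\Omega(x_0,\rho))}\le\|v_0\|_{L^\infty(\Omega(x_0,\rho))}+\|v_1\|_{L^\infty(\Omega(x_0,\rho))}$, and it remains to estimate the two terms.

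For $v_1$: since $d\le 3$ we have $f\in L^2(D)\subset L^q(D)$ with $q=2>d/2$, so Stampacchia's $L^\infty$ bound for divergence-form equations with bounded measurable coefficients (see \cite{St:1965,GiTr:1983}) applies on $D$; rescaling $D$ to a ball of radius $1$ (which leaves the ellipticity ratio $\lambda_{\max}(a)/\lambda_{\min}(a)$ unchanged) and rescaling back, it reads $\|v_1\|_{L^\infty(D)}\le C\rho^{2-d/2}\|f\|_{L^2(D)}$ with $C=C(d,\lambda_{\min}(a),\lambda_{\max}(a))$, the power of $\rho$ being fixed by scaling. In particular $\|v_1\|_{L^2(D)}\le C\rho^{d/2}\|v_1\|_{L^\infty(D)}\le C\rho^{2}\|f\|_{L^2(D)}$. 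For $v_0$: it is, in particular, a subsolution of $\diiv(a\nabla\cdot)=0$ in $D$, so the interior local boundedness estimate of De~Giorgi--Nash--Moser (e.g.\ \cite[Thm.~8.17]{GiTr:1983}), again rescaled, gives $\|v_0\|_{L^\infty(\Omega(x_0,\rho))}\le C\big(\rho^{-d}\int_D v_0^2\big)^{1/2}$; combining with $\|v_0\|_{L^2(D)}\le\|v\|_{L^2(D)}+\|v_1\|_{L^2(D)}$ and the previous line yields $\|v_0\|_{L^\infty(\Omega(x_0,\rho))}\le C\big(\rho^{-d}\int_D v^2\big)^{1/2}+C\rho^{2-d/2}\|f\|_{L^2(D)}$. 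Adding the bounds for $v_0$ and $v_1$ gives the stated inequality, the source term living naturally over $D=\Omega(x_0,2\rho)$; since only local boundedness is used downstream this is immaterial, and if the sharper form with the source over $\Omega(x_0,\rho)$ is wanted one re-runs the split on the smaller ball and transports the sup of the $a$-harmonic remainder to $\overline{B(x_0,\rho)}$ via the weak maximum principle for $\diiv(a\nabla\cdot)=0$.

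I expect the main work to be bookkeeping: making the constant depend only on $\lambda_{\min}(a)$ and $\lambda_{\max}(a)$ (and $d$), and not on $\rho$, $x_0$, or $\Omega$. This is exactly why the two De~Giorgi--Nash--Moser estimates are invoked after rescaling $B(x_0,2\rho)$ to the unit ball, where their constants depend only on the scale-invariant ellipticity ratio and the dimension. The one genuinely separate case is $B(x_0,2\rho)\not\subset\Omega$, where the interior estimates must be replaced by their boundary-regularity analogues up to the piecewise-Lipschitz, interior-cone boundary of $\Omega$; in the applications of this lemma the relevant function vanishes on $\partial\Omega$, so those boundary versions apply without change. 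Finally, the left-hand side is meaningful (i.e.\ $v$ is genuinely bounded near $x_0$) because $d\le 3$ and $\diiv(a\nabla v)\in L^2$ already force local Hölder continuity, as recalled in Section~\ref{sec:varforminterbas}.
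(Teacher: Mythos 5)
Your proof takes essentially the same route as the paper: a Dirichlet split $v=v_1+v_0$ with $v_1$ carrying the source (bounded via Stampacchia's $L^\infty$ estimate, the $\rho^{2-d/2}$ factor coming from scaling) and $v_0$ $a$-harmonic (bounded via the De~Giorgi--Nash--Moser local boundedness estimate). The only difference is that you perform the split on $\Omega(x_0,2\rho)$ rather than $\Omega(x_0,\rho)$, which places the source norm over the doubled ball --- an immaterial weakening that, as you note, is trivial to remove.
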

\begin{proof}
Lemma \ref{lem:lem3controlnew} is classical and we refer to \cite{Stampaccia:1964} for its proof. The idea is to decompose
$v$ as $v=v_1+v_2$ where $v_1$ and $v_2$ satisfy  $\diiv(a\nabla v_1)=\diiv(a\nabla v)$ and $\diiv(a\nabla v_2)=0$ on $ \Omega(x_0,\rho)$   with boundary conditions $v_1=0$ and $v_2=v$ on $\partial \Omega(x_0,\rho)$.
By  the estimates on the $L^2$-norm of the Green's functions given in Lemma \ref{lem:Theta} (see also \cite{Stampaccia:1964, St:1965, GiTr:1983})
\begin{equation}
\|v_1\|_{L^\infty(\Omega(x_0,\rho))} \leq  C \rho^{2-\frac{d}{2}}
\big\|\diiv(a\nabla v)\big\|_{L^{2}(\Omega(x_0,\rho))}.
\end{equation}
Since $v_2$ is harmonic we have (using for instance Theorem 5.1 of \cite{Stampaccia:1964}),
\begin{equation}\label{eq:dguge3dddeee}
\|v_2\|_{L^\infty(\Omega(x_0,\rho))} \leq C \|v_2\|_{L^2(\Omega(x_0,2\rho))}.
\end{equation}
\end{proof}

\begin{figure}[tp]
	\begin{center}
		\subfigure[$\Omega_i$]{
			\includegraphics[width=0.4\textwidth]{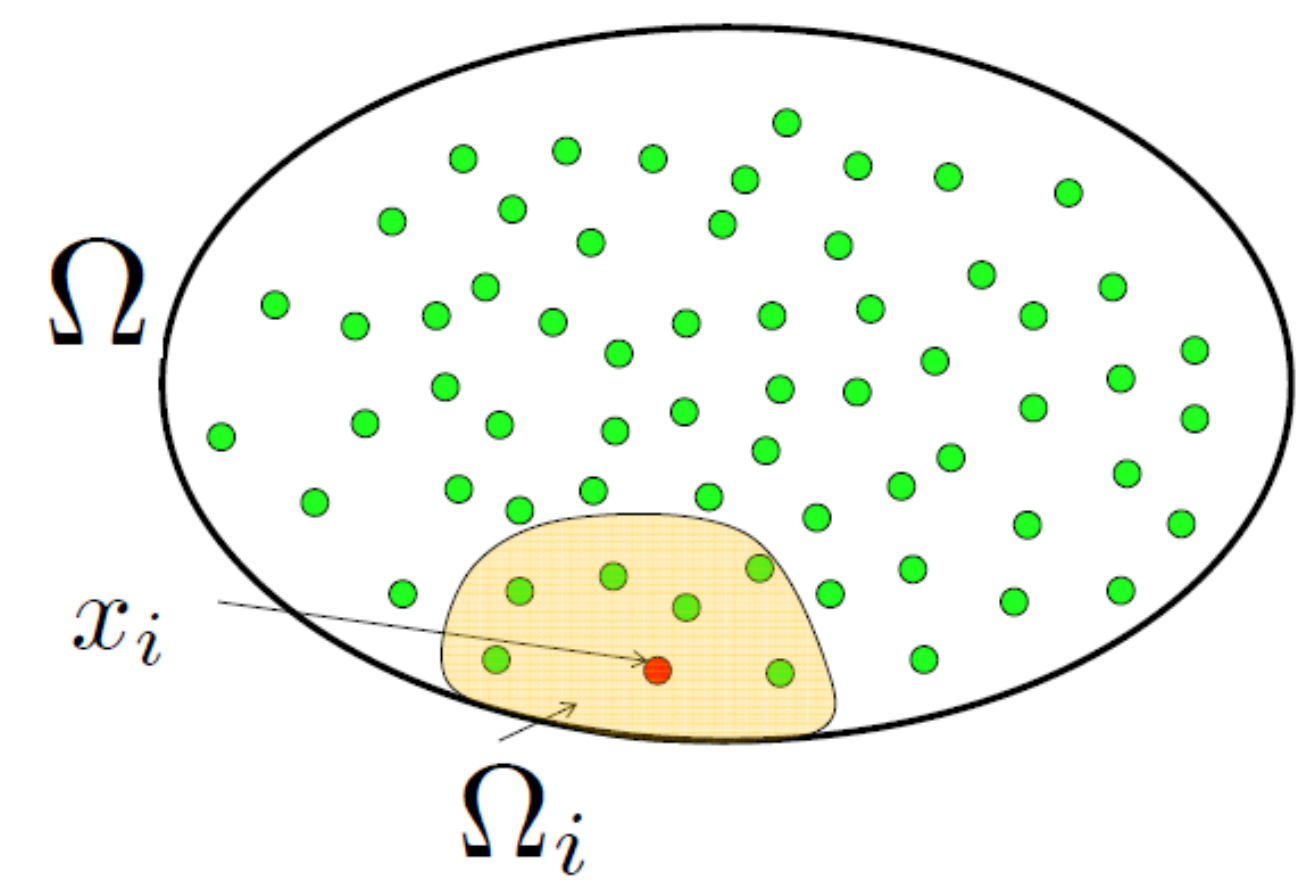}\label{fig:subdomain1}
		}
		\subfigure[$\Omega_{i}^\delta$]{
			\includegraphics[width=0.4\textwidth]{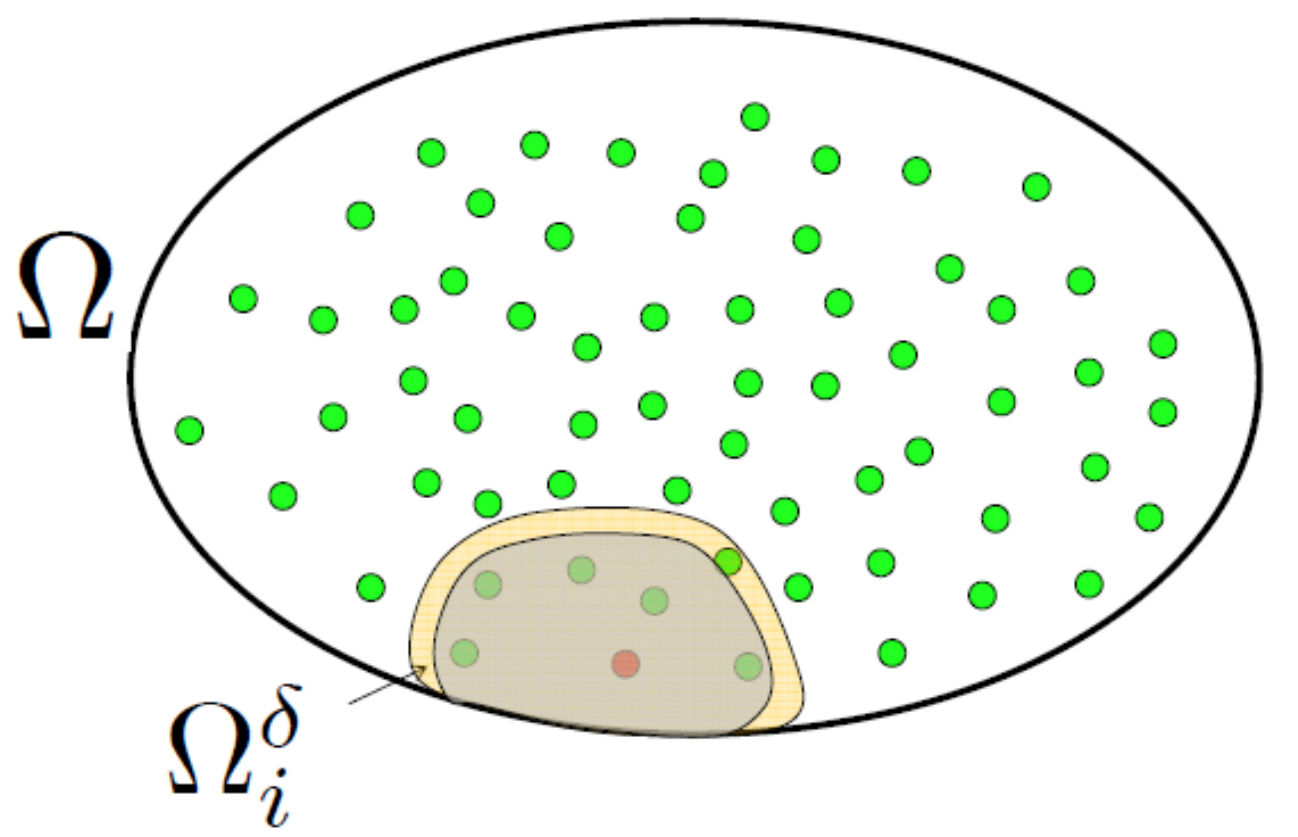}\label{fig:subdomain2}
		}
		\caption{$\Omega_i$ and $\Omega_i^\delta$ as defined in \eqref{eq:vgdvy3ve3e}.}
		\label{fig:subdomain}
	\end{center}
\end{figure}

\subsection{Control of the norm of the difference between the global and the localized basis}

Write $\operatorname{dist}(x,A)$ the distance from a point $x$ to a set $A$ in Euclidean norm.  Define for $\delta>0$
\begin{equation}\label{eq:vgdvy3ve3e}
\Omega_{i}^\delta:=\big\{x\in \Omega_i \mid \operatorname{dist}(x,\partial \Omega_i \cap \Omega)<\delta\big\}.
\end{equation}
See Figure \ref{fig:subdomain} for an illustration of $\Omega_i$ and $\omega_i^\delta$ when $\partial \Omega_i \cap \partial \Omega \not=\emptyset$.

\begin{Lemma}\label{lem:contchi1}
We have for $\delta_i \leq 1$
\begin{equation}\label{eq:unifchi}
\begin{split}
\big\|\chi_i^1\big\|_{L^\infty(\Omega)}\leq& C \delta_i^{-2}\big\|\diiv (a\nabla \phi_i^{\loc})\big\|_{L^2(\Omega_i^\frac{3\delta_i}{4})}
+C \delta_{i}^{-\frac{5}{2}}\big\| \phi_i^{\loc}\big\|_{L^2(\Omega_i^\frac{3\delta_i}{4})}.
\end{split}
\end{equation}
\end{Lemma}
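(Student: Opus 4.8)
The goal is to bound the $L^\infty$ norm of $\chi_i^1$ — the boundary-layer potential defined in \eqref{eq:chi1} — by quantities supported near $\partial\Omega_i\cap\Omega$, namely the $L^2$ norms of $\diiv(a\nabla\phi_i^{\loc})$ and of $\phi_i^{\loc}$ on the collar $\Omega_i^{3\delta_i/4}$. The natural strategy is to realize $\chi_i^1$ as the solution of a fourth-order problem whose right-hand side is a distribution supported on $\partial\Omega_i\cap\Omega$, apply the pointwise elliptic estimate of Lemma \ref{lem:lem3controlnew} twice (once for the outer operator and once for the inner one), and then estimate the remaining $L^2$ norms of $\chi_i^1$ and of $\diiv(a\nabla\chi_i^1)$ on a slightly larger collar. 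First I would recall from the alternate proof of Lemma \ref{lem:lemidediff} that $\chi_i^1$ (before the correction $\chi_i^2$) satisfies, away from the coarse nodes, $\diiv\big(a\nabla(\diiv(a\nabla\chi_i^1))\big)=n\cdot a\nabla\big(\diiv(a\nabla\phi_i^{\loc})\big)\delta_{\partial\Omega_i\cap\Omega}+\diiv\big(a\nabla(n\cdot a\nabla\phi_i^{\loc}\,\delta_{\partial\Omega_i\cap\Omega})\big)$, with $\chi_i^1=\diiv(a\nabla\chi_i^1)=0$ on $\partial\Omega$; equivalently, $\psi_i:=\diiv(a\nabla\chi_i^1)$ solves $\diiv(a\nabla\psi_i)=0$ in $\Omega\setminus(\partial\Omega_i\cap\Omega)$ with a jump in $n\cdot a\nabla\psi_i$ across the interface and $\psi_i$ continuous, while $\chi_i^1$ solves $\diiv(a\nabla\chi_i^1)=\psi_i$.

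Next I would apply Lemma \ref{lem:lem3controlnew} to $\chi_i^1$ itself on balls centered at an arbitrary $x_0\in\Omega$. Since $\diiv(a\nabla\chi_i^1)=\psi_i$ is supported (as a nonzero function) only in the collar region — more precisely $\psi_i$ is harmonic off the interface and small away from it — the estimate gives $\|\chi_i^1\|_{L^\infty}\lesssim \delta_i^{-d/2}\|\chi_i^1\|_{L^2(\text{collar})}+\delta_i^{2-d/2}\|\psi_i\|_{L^2(\text{collar})}$ after covering $\Omega$ by balls of radius comparable to $\delta_i$ and using that on balls far from the interface $\chi_i^1$ is $a$-harmonic so the $L^\infty$ norm is controlled by the interior $L^2$ norm, which one then propagates via a chain of balls (a Harnack-type / iterated mean-value argument, or simply maximum principle for the fourth-order problem) back to the collar. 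Then I would apply Lemma \ref{lem:lem3controlnew} a second time to $\psi_i$: since $\diiv(a\nabla\psi_i)=0$ off the interface, $\|\psi_i\|_{L^\infty}$ and hence $\|\psi_i\|_{L^2(\text{collar})}$ is controlled, via the jump relation and Caccioppoli (Lemma \ref{lem:caccop}) applied to $\psi_i$ across $\partial\Omega_i\cap\Omega$, by $\delta_i^{-1}\|\diiv(a\nabla\phi_i^{\loc})\|_{L^2}$ plus lower-order terms; similarly $\|\chi_i^1\|_{L^2(\text{collar})}$ is controlled by $\|\phi_i^{\loc}\|_{L^2}$ and $\|\diiv(a\nabla\phi_i^{\loc})\|_{L^2}$ on a marginally larger collar, using $\chi_i^1=\diiv(a\nabla\chi_i^1)=0$ on $\partial\Omega$ together with the reverse-Poincaré and Poincaré inequalities to absorb derivatives and convert between the layers $\Omega_i^{\delta_i/2}$, $\Omega_i^{3\delta_i/4}$, $\Omega_i^{\delta_i}$. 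Collecting the powers of $\delta_i$ (here $d\le 3$, so $2-d/2\ge 1/2$ and $d/2\le 3/2$; the worst exponent $\delta_i^{-5/2}$ arises from one factor $\delta_i^{-1}$ from Caccioppoli across the interface for the single-layer term times the $\delta_i^{-3/2}$ from the pointwise bound) yields \eqref{eq:unifchi}.

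\textbf{Main obstacle.} The delicate point is the rigorous bookkeeping of the boundary-layer: $\chi_i^1$ is defined by a single- and double-layer potential against the rough Green's functions $G$ and $\tau$, and one must show that $\|\chi_i^1\|_{L^\infty(\Omega)}$ — including at points far from $\partial\Omega_i\cap\Omega$ and near $\partial\Omega$ — is governed only by data on the collar $\Omega_i^{3\delta_i/4}$, with the stated negative powers of $\delta_i$ and \emph{no} dependence on $H$ or on global geometry beyond the constants allowed. This requires care because the layer potentials are nonlocal; the resolution is to go back to the PDE characterization, use that $\chi_i^1$ and $\psi_i=\diiv(a\nabla\chi_i^1)$ are $a$-(poly)harmonic off the interface with homogeneous Dirichlet data on $\partial\Omega$, invoke the maximum principle / De Giorgi–Nash–Moser pointwise bounds (Lemma \ref{lem:lem3controlnew}) which are scale-invariant and depend only on the ellipticity contrast, and then localize everything with Caccioppoli (Lemma \ref{lem:caccop}) and the cutoff $\eta$ supported in the collar. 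The one subtlety I would flag is that $\psi_i$ is only $\H^{-1}$-type across the interface due to the double-layer term; one handles this by testing against the fundamental solution $\tau$ rather than differentiating, exactly as in the "initial proof" of Lemma \ref{lem:lemidediff}, so that all integrations by parts are legitimate.
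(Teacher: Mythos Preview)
Your proposal has a genuine gap. You assert that ``on balls far from the interface $\chi_i^1$ is $a$-harmonic,'' and plan to propagate $L^\infty$ control back to the collar via chains of balls or a ``maximum principle for the fourth-order problem.'' But $\chi_i^1$ is \emph{not} $a$-harmonic away from $\partial\Omega_i\cap\Omega$: from \eqref{eq:chi1} one computes $-\diiv(a\nabla\chi_i^1)(x)=\int_{\partial\Omega_i\cap\Omega}G(x,y)\,n\cdot a\nabla(\diiv(a\nabla\phi_i^{\loc}))\,dy$ for $x\notin\partial\Omega_i\cap\Omega$, which is $a$-harmonic but not zero --- you yourself call this $\psi_i$ and elsewhere treat it as nonzero, so the plan is internally inconsistent. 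There is also no maximum principle for the rough biharmonic operator $(\diiv(a\nabla\cdot))^2$, and a chain-of-balls argument with balls of radius $\sim\delta_i$ would require $O(\diam(\Omega)/\delta_i)$ iterations of Lemma~\ref{lem:lem3controlnew}, destroying the stated powers of $\delta_i$. In short, your mechanism for controlling $\chi_i^1(x)$ at points \emph{far} from the collar does not work.

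The paper bypasses this entirely by working directly with the layer-potential representation \eqref{eq:chi1}. One inserts a cutoff $\eta$ (equal to $1$ on $\bar{\Omega}_i^{\delta_i/4}$, supported in $\Omega_i^{\delta_i/2}$, with $|\nabla\eta|\le C\delta_i^{-1}$) into the boundary integrals, applies the divergence theorem once to convert them into volume integrals over the collar, and then invokes the \emph{global} uniform-in-$x$ bounds $\|G(x,\cdot)\|_{L^2(\Omega)},\ \|\tau(x,\cdot)\|_{L^2(\Omega)},\ \|\nabla\tau(x,\cdot)\|_{L^2(\Omega)},\ \|\nabla G(x,\cdot)\|_{L^1(\Omega)}\le C$ valid for $d\le3$ (cf.\ Lemma~\ref{lem:Theta} and \cite{GrWi:1982}). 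This yields a pointwise bound on $\chi_i^1(x)$ at every $x\in\Omega$ in one stroke, with no propagation, in terms of collar norms of $\nabla(\diiv(a\nabla\phi_i^{\loc}))$, $\nabla\phi_i^{\loc}$, $\diiv(a\nabla\phi_i^{\loc})$ and $\|\phi_i^{\loc}\|_{L^\infty}$. The first two are then reduced via Caccioppoli (Lemma~\ref{lem:caccop}, using that $\diiv(a\nabla\phi_i^{\loc})$ is $a$-harmonic on the collar since no coarse nodes lie there), and the last via Lemma~\ref{lem:lem3controlnew} --- applied to $\phi_i^{\loc}$, not to $\chi_i^1$. The idea you are missing is that the Green-function $L^2$ bounds deliver the global $L^\infty$ estimate directly.
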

\begin{proof}
Let $\eta$ be a smooth function  equal to $1$ on $\bar{\Omega}_{i}^{\frac{\delta_i}{4}}$, $0$ on $\Omega_i\setminus \Omega_{i}^{\frac{\delta_i}{2}}$ and such that $0\leq \eta \leq 1$ and such that
\begin{equation}
\sup_{x\in \Omega_i} |\nabla \eta(x)|\leq \frac{C}{\delta_i}
\end{equation}
for some constant $C$ independent from $H$ and $\delta_i$.
Let $\chi_i^1$ be the function defined in \eqref{eq:chi1}.
Noting that $\eta=1$ on  $\partial \Omega_i \cap \Omega$,  $\tau(x,y)=0$ and $G(x,y)=0$ for $y\in \partial \Omega$ we have
\begin{equation}
\chi_i^1(x)=\int_{\partial \Omega_i }\tau(x,y) n\cdot \eta a \nabla \big(\diiv (a\nabla \phi_i^{\loc})\big)\,dy
-\int_{\partial \Omega_i } G(x,y) n\cdot \eta a\nabla \phi_i^{\loc}\,dy.
\end{equation}
We deduce, after integration by parts and using the fact that $\diiv\Big(a \nabla \big(\diiv (a\nabla \phi_i^{\loc})\big)\Big)=0$ on $\Omega_i^{\frac{\delta_i}{2}}$, that
\begin{equation}
\begin{split}
\chi_i^1(x)=&\int_{ \Omega_i }\nabla \tau(x,y) \eta a \nabla \big(\diiv (a\nabla \phi_i^{\loc})\big)\,dy
+\int_{ \Omega_i } \tau(x,y) \nabla \eta a \nabla \big(\diiv (a\nabla \phi_i^{\loc})\big)\,dy\\
&-\int_{\Omega_i } \nabla G(x,y) \eta a\nabla \phi_i^{\loc}\,dy-\int_{\Omega_i }  G(x,y) \nabla \eta a\nabla \phi_i^{\loc}\,dy\\&
-\int_{\Omega_i }  G(x,y)  \eta \diiv(a\nabla \phi_i^{\loc})\,dy.
\end{split}
\end{equation}
Noting (by integration by parts) that
\begin{equation}
\begin{split}
-\int_{\Omega_i } \nabla G(x,y) \eta a\nabla \phi_i^{\loc}\,dy&=-\phi_i^{\loc}(x)\eta(x)+\int_{\Omega_i } \nabla G(x,y) a \nabla \eta \phi_i^{\loc}\,dy\\
\leq &\big|\phi_i^{\loc}(x)\eta(x)\big|+ C \big\|\nabla \eta \phi_i^{\loc}\big\|_{L^\infty(\Omega_i^\frac{\delta_i}{2})} \big\|\nabla G(x,y)\big\|_{L^1(\Omega_i^{\frac{\delta_i}{2}})}
\end{split}
\end{equation}
and observing that the $L^2$ norms of $\nabla \tau(x,y)$, $\tau(x,y)$, $ G(x,y)$ and the $L^1$ norm of $\nabla G(x,y)$ are bounded by a constant $C$ depending only on $\lambda_{\min}(a)$, $\lambda_{\max}(a)$ and $\Omega$ (see proof of Lemma \ref{lem:Theta}, equation \eqref{eq:detftau}, \cite{GrWi:1982,Taylor:2012}   and references therein) we deduce that
\begin{equation}
\begin{split}
\big\|\chi_i^1\big\|_{L^\infty(\Omega)}\leq& C \delta_i^{-1}\Big\|\nabla \big(\diiv (a\nabla \phi_i^{\loc})\big)\Big\|_{L^2(\Omega_i^\frac{\delta_i}{2})}
+C \delta_i^{-1}\big\|\nabla \phi_i^{\loc}\|_{L^2(\Omega_i^\frac{\delta_i}{2})}\\&+ C\|\diiv(a\nabla \phi_i^{\loc})\|_{L^2(\Omega_i^\frac{\delta_i}{2})}
+ C \delta_i^{-1} \big\| \phi_i^{\loc}\|_{L^\infty(\Omega_i^\frac{\delta_i}{2})}.
\end{split}
\end{equation}
Since $\big(\diiv (a\nabla \phi_i^{\loc})$ is harmonic on $\Omega_i^{\frac{3\delta_i}{4}}$ we obtain from Caccioppoli's inequality (i.e. Lemma \ref{lem:caccop} with $D_0=\Omega_i$, $D_2=\Omega_i^{\frac{\delta_i}{2}}$ and $D_1=\Omega_i^{\frac{3\delta_i}{4}}$) that
\begin{equation}\label{eq:divachi}
\begin{split}
\Big\|\nabla \big(\diiv (a\nabla \phi_i^{\loc})\big)\Big\|_{L^2(\Omega_i^\frac{\delta_i}{2})}\leq C \delta_i^{-1}
\big\|\diiv (a\nabla \phi_i^{\loc})\big\|_{L^2(\Omega_i^\frac{3\delta_i}{4})}.
\end{split}
\end{equation}
Therefore
\begin{equation}
\begin{split}
\big\|\chi_i^1\big\|_{L^\infty(\Omega)}\leq& C \delta_i^{-2} \big\|\diiv (a\nabla \phi_i^{\loc})\big\|_{L^2(\Omega_i^\frac{3\delta_i}{4})}
\\&+C \delta_i^{-1}\big\|\nabla \phi_i^{\loc}\|_{L^2(\Omega_i^\frac{\delta_i}{2})}+ C \delta_i^{-1} \big\| \phi_i^{\loc}\|_{L^\infty(\Omega_i^\frac{\delta_i}{2})}.
\end{split}
\end{equation}
Similarly, using the reverse Poincar\'{e} inequality (Lemma \ref{lem:caccop} with $D_0=\Omega_i$, $D_2=\Omega_i^{\frac{\delta_i}{2}}$ and $D_1=\Omega_i^{\frac{3\delta_i}{4}}$) and Young's inequality ($|ab|\leq \frac{\epsilon}{2}a^2+\frac{2}{\epsilon}b^2$) we obtain that
\begin{equation}\label{eq:nabchi}
\begin{split}
\big\|\nabla \phi_i^{\loc}\|_{L^2(\Omega_i^\frac{\delta_i}{2})}\leq C \delta_{i}^{-1}\big\| \phi_i^{\loc}\big\|_{L^2(\Omega_i^\frac{3\delta_i}{4})}+ \delta_i \big\|\diiv (a\nabla \phi_i^{\loc})\big\|_{L^2(\Omega_i^\frac{3\delta_i}{4})}.
\end{split}
\end{equation}
Therefore
\begin{equation}\label{eq:uguyg3ee}
\begin{split}
\big\|\chi_i^1\big\|_{L^\infty(\Omega)}\leq& C \delta_i^{-2} \big\|\diiv (a\nabla \phi_i^{\loc})\big\|_{L^2(\Omega_i^\frac{3\delta_i}{4})}
\\&+C \delta_{i}^{-2}\big\| \phi_i^{\loc}\big\|_{L^2(\Omega_i^\frac{3\delta_i}{4})}+ C \delta_i^{-1} \big\| \phi_i^{\loc}\|_{L^\infty(\Omega_i^\frac{\delta_i}{2})}.
\end{split}
\end{equation}

Now to control $\| \phi_i^{\loc}\|_{L^\infty(\Omega_i^\frac{\delta_i}{2})}$ note that on $\Omega_i^\frac{\delta_i}{2}$,
$\phi_i^{\loc}$ can be decomposed as $\phi_i^{\loc}=v_1+v_2$ where $v_1$ and $v_2$ satisfy  $\diiv(a\nabla v_1)=\diiv(a\nabla\phi_i^{\loc})$ and $\diiv(a\nabla v_2)=0$ on $ \Omega_i^\frac{\delta_i}{2}$   with boundary conditions $v_1=0$ and $v_2=\phi_i^{\loc}$ on $\partial \Omega_i^\frac{\delta_i}{2}$.
We therefore obtain that
\begin{equation}\label{eq:ihi2iww}
\big\| \phi_i^{\loc}\|_{L^\infty(\Omega_i^\frac{\delta_i}{2})} \leq \|v_1\|_{L^\infty(\Omega_i^\frac{\delta_i}{2})}+ \|v_2\|_{L^\infty(\Omega_i^\frac{\delta_i}{2})}.
\end{equation}
By \cite{St:1965, GiTr:1983} (one can also use the bounds on the Green's function given in the proof Lemma \ref{lem:Theta}) we have
\begin{equation}
\|v_1\|_{L^\infty(\Omega_i^\frac{\delta_i}{2})} \leq C \big\| \diiv(a\nabla\phi_i^{\loc}) \big\|_{L^2(\Omega_i^\frac{\delta_i}{2})}.
\end{equation}
Since $v_2$ is harmonic we have,
\begin{equation}\label{eq:dguge3eee}
\|v_2\|_{L^\infty(\Omega_i^\frac{\delta_i}{2})} \leq C \|\phi_i^{\loc}\|_{L^\infty(\partial \Omega_i^\frac{\delta_i}{2})}.
\end{equation}
Using lemma \ref{lem:lem3controlnew} we obtain that for $x_0 \in \partial \Omega_i^\frac{\delta_i}{2} \cap \Omega_i$ and $\rho= \delta_i/8$,
\begin{equation}\label{eq:dgygugy3e}
\|\phi_i^{\loc}\|_{L^\infty(B(x_0,\rho)\cap \Omega_i)}\leq \big( \frac{C}{\rho^d}\int_{B(x_0,2\rho)\cap \Omega_i} (\phi_i^{\loc})^2\big)^\frac{1}{2} + C \rho^{2-\frac{d}{2}}
\big\|\diiv(a\nabla \phi_i^{\loc})\big\|_{L^{2}(B(x_0,\rho)\cap \Omega_i)}.
\end{equation}
Combining \eqref{eq:dgygugy3e} with \eqref{eq:dguge3eee} we deduce that
\begin{equation}\label{eq:deddguge3eee}
\|v_2\|_{L^\infty(\Omega_i^\frac{\delta_i}{2})} \leq C \delta_i^{-\frac{d}{2}} \|\phi_i^{\loc}\|_{L^2(\Omega_i^\frac{3\delta_i}{4})}+
\delta_i^{2-\frac{d}{2}} \big\|\diiv(a\nabla \phi_i^{\loc})\big\|_{L^{2}(\Omega_i^\frac{3\delta_i}{4})}.
\end{equation}
Using \eqref{eq:ihi2iww} we deduce that (for $\delta_i\leq 1$ and $d\leq 3$)
\begin{equation}\label{eq:ihi2dddiww}
\big\| \phi_i^{\loc}\|_{L^\infty(\Omega_i^\frac{\delta_i}{2})} \leq C \delta_i^{-\frac{d}{2}} \|\phi_i^{\loc}\|_{L^2(\Omega_i^\frac{3\delta_i}{4})}+
C \big\|\diiv(a\nabla \phi_i^{\loc})\big\|_{L^{2}(\Omega_i^\frac{3\delta_i}{4})}.
\end{equation}
Combining \eqref{eq:ihi2dddiww} and \eqref{eq:uguyg3ee}  we deduce \eqref{eq:unifchi} for $\delta_i \leq 1$.
\end{proof}

\begin{Lemma}\label{lem:contchi2}
Let $\chi_i^2$ be defined as in \eqref{eq:chi1bis}.
We have
\begin{equation}
\|\nabla \chi_i^2\|_{L^2(\Omega)}\leq N \big\|\chi^1_i\big\|_{L^\infty(\Omega)} \max_{j\in \cN} \sqrt{P_{jj}}.
\end{equation}
\end{Lemma}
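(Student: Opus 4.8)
The plan is to first collapse the double sum defining $\chi_i^2$ into a plain linear combination of the global basis functions $\phi_k$, and then estimate that combination by the triangle inequality together with the elementary energy bound $\|\nabla\phi_k\|_{L^2(\Omega)}\leq\sqrt{P_{kk}}$.

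First I would rewrite \eqref{eq:chi1bis} by grouping the terms according to the index $k$: for each $k$,
\begin{equation}
\sum_{j=1}^N \tau(x,x_j)\,P_{j,k} \;=\; \sum_{j=1}^N P_{k,j}\,\tau(x,x_j) \;=\; \phi_k(x),
\end{equation}
where the first equality uses the symmetry of $P=\Theta^{-1}$ (which follows from the symmetry of $\Theta$ established in Lemma \ref{lem:Theta}) and the second is the representation formula \eqref{eq:phiidef} for $\phi_k$. Consequently
\begin{equation}
\chi_i^2(x) = -\sum_{k=1}^N \chi_i^1(x_k)\,\phi_k(x),
\end{equation}
and (since $\chi_i^1$ is H\"older continuous near the coarse nodes, as noted in the proof of Lemma \ref{lem:lemidediff}) each coefficient satisfies $|\chi_i^1(x_k)|\leq \|\chi_i^1\|_{L^\infty(\Omega)}$. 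Taking gradients and applying the triangle inequality in $L^2(\Omega)$ then gives
\begin{equation}
\|\nabla\chi_i^2\|_{L^2(\Omega)} \leq \sum_{k=1}^N |\chi_i^1(x_k)|\,\|\nabla\phi_k\|_{L^2(\Omega)} \leq \|\chi_i^1\|_{L^\infty(\Omega)}\sum_{k=1}^N \|\nabla\phi_k\|_{L^2(\Omega)},
\end{equation}
so it remains only to show $\|\nabla\phi_k\|_{L^2(\Omega)}\leq \sqrt{P_{kk}}$ for every $k$, after which $\sum_{k=1}^N\|\nabla\phi_k\|_{L^2(\Omega)}\leq N\max_{j\in\cN}\sqrt{P_{jj}}$ closes the argument.

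For the remaining bound I would integrate by parts (legitimate since $\phi_k\in\H^1_0(\Omega)$ with $\diiv(a\nabla\phi_k)\in L^2(\Omega)$), then use Cauchy--Schwarz, the Poincar\'e inequality on $\H^1_0(\Omega)$, and the uniform ellipticity \eqref{skjdhskhdkh3e}:
\begin{equation}
\int_\Omega (\nabla\phi_k)^T a\,\nabla\phi_k = -\int_\Omega \phi_k\,\diiv(a\nabla\phi_k) \leq \|\phi_k\|_{L^2(\Omega)}\,\|\diiv(a\nabla\phi_k)\|_{L^2(\Omega)} \leq C\,\|\nabla\phi_k\|_{L^2(\Omega)}\,\sqrt{P_{kk}},
\end{equation}
where I used $\|\diiv(a\nabla\phi_k)\|_{L^2(\Omega)}^2=\<\phi_k,\phi_k\>=P_{kk}$ from \eqref{eqn:orthogjuoddoty}; dividing by $\|\nabla\phi_k\|_{L^2(\Omega)}$ yields $\|\nabla\phi_k\|_{L^2(\Omega)}\leq C\sqrt{P_{kk}}$, with the constant suppressed per the paper's stated convention (and controlled by the Poincar\'e constant of $\Omega$ and $\lambda_{\min}(a)$ after the normalization $\diam(\Omega)\leq 1$ of Section \ref{sec:Localization}).

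The computation is essentially routine; the one real step is recognizing the collapse of the double sum in the first paragraph, which turns $\chi_i^2$ into an explicit linear combination of the $\phi_k$. If anything is delicate, it is only keeping the Poincar\'e and ellipticity constants clean in the bound $\|\nabla\phi_k\|_{L^2(\Omega)}\leq\sqrt{P_{kk}}$, which is harmless given the paper's convention on constants.
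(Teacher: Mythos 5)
Your proposal is correct and follows essentially the same route as the paper: collapse the double sum into $\chi_i^2=-\sum_k \chi_i^1(x_k)\phi_k$, apply the triangle inequality, and use $\|\nabla\phi_k\|_{L^2(\Omega)}\leq C\sqrt{P_{kk}}$ together with $\<\phi_k,\phi_k\>=P_{kk}$. The only difference is that you supply the intermediate justifications (symmetry of $P$ for the collapse, the integration-by-parts/Poincar\'e/ellipticity derivation of $\|\nabla\phi_k\|_{L^2}\leq C\,\|\diiv(a\nabla\phi_k)\|_{L^2}$) that the paper simply asserts, and your remark about the suppressed constant $C$ matches what actually appears in the paper's own proof (which carries a $C$ even though the stated lemma does not).
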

\begin{proof}
We have
\begin{equation}
\chi_i^2(x):=-\sum_{k=1}^N \phi_k(x) \chi_i^1(x_k).
\end{equation}
Using
\begin{equation}
\|\nabla \phi_k\|_{L^2(\Omega)}\leq C \big\|\diiv(a\nabla \phi_k)\big\|_{L^2(\Omega)}
\end{equation}
we deduce that
\begin{equation}
\|\nabla \chi_i^2\|_{L^2(\Omega)}\leq C \sum_{k=1}^N \sqrt{P_{kk}} \big|\chi_i^1(x_k)\big|_{L^\infty(\Omega)}.
\end{equation}
\end{proof}

\begin{Lemma}\label{lem:contchi1nab}
We have for $0<\delta_i\leq 1$
\begin{equation}\label{eq:4uhuhkjkiu}
\begin{split}
\big\|\nabla \chi_i^1\|_{L^2(\Omega)} \leq  C \delta_i^{-3}\Big( \big\|\diiv (a\nabla \phi_i^{\loc})\big\|_{L^2(\Omega_i^\frac{3\delta_i}{4})}
+\big\| \phi_i^{\loc}\big\|_{L^2(\Omega_i^\frac{3\delta_i}{4})}\Big).
\end{split}
\end{equation}
\end{Lemma}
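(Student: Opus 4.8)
The plan is to differentiate in $x$ the volume representation of $\chi_i^1$ already obtained inside the proof of Lemma~\ref{lem:contchi1}, and then to control each resulting term in $L^2(\Omega)$ by an elementary energy estimate for $-\diiv(a\nabla\cdot)$ and for its square $(\diiv(a\nabla\cdot))^2$. Recall that, with the cut-off $\eta$ of that proof ($\eta=1$ on $\overline{\Omega}_i^{\delta_i/4}$, $\eta=0$ on $\Omega_i\setminus\Omega_i^{\delta_i/2}$, $0\le\eta\le1$, $\|\nabla\eta\|_{L^\infty}\le C\delta_i^{-1}$) and with $q:=\diiv(a\nabla\phi_i^{\loc})$, that proof writes $\chi_i^1$ as a sum of five integrals over $\Omega_i$, with kernels $\nabla_y\tau(x,y)$, $\tau(x,y)$, $a\nabla_y G(x,y)$, $G(x,y)$, $G(x,y)$, tested respectively against $\eta\,a\nabla q$, $\nabla\eta\cdot a\nabla q$, $\nabla\eta\,\phi_i^{\loc}$, $\nabla\eta\cdot a\nabla\phi_i^{\loc}$, $\eta\,q$, plus the explicit local term $-\eta(x)\phi_i^{\loc}(x)$. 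Differentiating each term in $x$ moves the extra derivative onto the kernels, producing $\nabla_x\nabla_y\tau$, $\nabla_x\tau$, $a\nabla_y\nabla_x G$, $\nabla_x G$, $\nabla_x G$, together with the explicit $-\nabla(\eta\phi_i^{\loc})$.

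Next I would bound, in $L^2(\Omega)$, each of the five integral terms by $C$ times the $L^2(\Omega_i)$-norm of its density, using only the following facts (constants depending only on $\Omega$, $\lambda_{\min}(a)$, $\lambda_{\max}(a)$): if $h\in L^2(\Omega)$ and $-\diiv(a\nabla u)=h$, $u\in\H^1_0(\Omega)$, then $\|\nabla u\|_{L^2}\le C\|h\|_{L^2}$ (energy identity and Poincar\'{e}), hence the same holds when the datum is $\diiv\Phi$ with $\Phi\in L^2(\Omega)^d$; iterating gives the analogue for $(\diiv(a\nabla\cdot))^2$, whose fundamental solution is $\tau$. For the two $\tau$-terms and the $a\nabla_y\nabla_x G$-term, where the extra derivative lands in the $y$-variable that is contracted with the density, I would argue by duality: testing against $\Phi\in C_c^\infty(\Omega)^d$ rewrites the term as an integral of $a\nabla W_\Phi$ against the same density, with $W_\Phi$ the solution of $-\diiv(a\nabla W_\Phi)=\diiv\Phi$ (respectively of the squared problem with datum $\diiv\Phi$), so that Cauchy--Schwarz and the estimates above bound it by $C\|\Phi\|_{L^2}$ times the $L^2(\Omega_i)$-norm of the density; taking the supremum over $\Phi$ gives the $L^2$ bound. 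The three $G$-terms whose density is already a scalar in $L^2(\Omega_i)$ are handled directly, since each is $\nabla_x$ of the solution of an equation $-\diiv(a\nabla u)=h\,\mathbf{1}_{\Omega_i}$ with $h$ its density, and $-\nabla(\eta\phi_i^{\loc})$ is estimated by inspection. Collecting these bounds and using $|\nabla\eta|\le C\delta_i^{-1}$ and $\delta_i\le1$ gives $\|\nabla\chi_i^1\|_{L^2(\Omega)}\le C\delta_i^{-1}(\|\nabla q\|_{L^2(\Omega_i^{\delta_i/2})}+\|\nabla\phi_i^{\loc}\|_{L^2(\Omega_i^{\delta_i/2})}+\|q\|_{L^2(\Omega_i^{\delta_i/2})}+\|\phi_i^{\loc}\|_{L^2(\Omega_i^{\delta_i/2})})$.

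To conclude, I would eliminate the boundary-layer $\H^1$-seminorms exactly as in the proof of Lemma~\ref{lem:contchi1}: the estimate \eqref{eq:divachi} (Caccioppoli for the $a$-harmonic function $q$ on $\Omega_i^{3\delta_i/4}$, where the lower-order term is absent) bounds $\|\nabla q\|_{L^2(\Omega_i^{\delta_i/2})}$ by $C\delta_i^{-1}\|q\|_{L^2(\Omega_i^{3\delta_i/4})}$, and the reverse Poincar\'{e} estimate \eqref{eq:nabchi} bounds $\|\nabla\phi_i^{\loc}\|_{L^2(\Omega_i^{\delta_i/2})}$ by $C\delta_i^{-1}\|\phi_i^{\loc}\|_{L^2(\Omega_i^{3\delta_i/4})}+\delta_i\|q\|_{L^2(\Omega_i^{3\delta_i/4})}$; enlarging the remaining $L^2$-balls from $\Omega_i^{\delta_i/2}$ to $\Omega_i^{3\delta_i/4}$ and using $\delta_i\le1$ yields $\|\nabla\chi_i^1\|_{L^2(\Omega)}\le C\delta_i^{-2}(\|\diiv(a\nabla\phi_i^{\loc})\|_{L^2(\Omega_i^{3\delta_i/4})}+\|\phi_i^{\loc}\|_{L^2(\Omega_i^{3\delta_i/4})})$, which is stronger than \eqref{eq:4uhuhkjkiu} since $\delta_i\le1$.

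I expect the main obstacle to be the $L^2(\Omega)$-bound for the term with the doubly singular kernel $\nabla_x\nabla_y G(x,y)$ contracted against $a\nabla\phi_i^{\loc}$, which is only in $L^2(\Omega_i)$ and has a jump across $\partial\Omega_i\cap\Omega$ once extended by zero, so a pointwise-kernel or Calder\'{o}n--Zygmund argument would be unpleasant. This is exactly why one first performs the integration by parts already used in Lemma~\ref{lem:contchi1}, which rewrites that term as $-\eta(x)\phi_i^{\loc}(x)$ plus an integral of $a\nabla_y G$ against the lower-order density $\nabla\eta\,\phi_i^{\loc}\in L^2(\Omega_i)$; after that, the duality argument applies and never requires a trace of $\nabla\phi_i^{\loc}$ on $\partial\Omega_i$. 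As elsewhere in the paper, the restriction $d\le3$ enters only through Lemma~\ref{lem:Theta}, which makes $\tau$, $G$ and all the integrals above well defined.
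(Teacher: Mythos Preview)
Your approach is correct but takes a genuinely different route from the paper. The paper proceeds via the energy identity $\int_\Omega \nabla\chi_i^1\, a\,\nabla\chi_i^1 = -\int_\Omega \chi_i^1\,\diiv(a\nabla\chi_i^1)$: computing $\diiv(a\nabla\chi_i^1)$ from the surface representation \eqref{eq:chi1} yields two boundary integrals $I_1,I_2$, which are pushed to volume integrals over $\Omega_i^{\delta_i/2}$ with the same cut-off $\eta$; after absorbing the cross term $\int_{\Omega_i}\eta\,\nabla\chi_i^1\cdot a\nabla\phi_i^{\loc}$ by Young's inequality, the right-hand side is controlled by $\|\chi_i^1\|_{L^\infty(\Omega)}$ (supplied by Lemma~\ref{lem:contchi1}) together with $\|\nabla q\|$, $\|\nabla\phi_i^{\loc}\|$, $\|q\|$ on $\Omega_i^{\delta_i/2}$, and one finishes exactly as you do via \eqref{eq:divachi} and \eqref{eq:nabchi}. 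You instead differentiate the volume representation of $\chi_i^1$ term by term and bound each piece in $L^2(\Omega)$ by duality against the energy estimates for $-\diiv(a\nabla\cdot)$ and its square, so the $L^\infty$ bound \eqref{eq:unifchi} is never invoked. This buys you one power of $\delta_i$ (you get $\delta_i^{-2}$ versus the paper's $\delta_i^{-3}$), since the paper's intermediate estimate carries a factor $\delta_i^{-1}\|\chi_i^1\|_{L^\infty}$ and $\|\chi_i^1\|_{L^\infty}$ itself already costs $\delta_i^{-5/2}$. The small price is that your duality step for the mixed kernels $\nabla_x\nabla_y\tau$ and $a\nabla_y\nabla_x G$ must be written out with some care: the clean version is to integrate by parts in $x$ first so that the test field $\Phi$ becomes $\diiv\Phi$, then identify $\int_\Omega\tau(x,\cdot)\diiv\Phi\,dx$ and $\int_\Omega G(x,\cdot)\diiv\Phi\,dx$ as the solutions $W_\Phi$ of the fourth- and second-order problems with right-hand side $\diiv\Phi$, so that only the standard bound $\|\nabla W_\Phi\|_{L^2}\le C\|\Phi\|_{L^2}$ is required. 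The paper's route is slightly lossier in the exponent but more self-contained, as it recycles Lemma~\ref{lem:contchi1} directly rather than appealing to mapping properties of the Green operators.
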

\begin{proof}
Using
\begin{equation}
\int_{\Omega}\nabla v a \nabla v=-\int_{\Omega} v \diiv(a\nabla v)
\end{equation}
we obtain from \eqref{eq:chi1} that
\begin{equation}\label{eq:1}
\int_{\Omega}\nabla \chi_i^1 a \nabla \chi_i^1 \leq  I_1+ I_2
\end{equation}
with
\begin{equation}\label{eq:ckjkjkjuujhi1}
I_1=\int_{x\in \Omega}\int_{y\in \partial \Omega_i \cap \Omega}\chi_i^1(x) G(x,y) n\cdot a \nabla \big(\diiv (a\nabla \phi_i^{\loc})\big)\,dy
\end{equation}
and
\begin{equation}\label{eq:ckjkuujhi1}
I_2=
-\int_{\partial \Omega_i \cap \Omega} \chi_i^1(y) n\cdot a\nabla \phi_i^{\loc}\,dy.
\end{equation}
Defining $\eta$ as in the proof of Lemma \ref{lem:contchi1} we have (since $\eta=1$ on $\partial \Omega_i \cap \Omega$ and $\chi_i^1=0$ on $\partial \Omega$)

\begin{equation}\label{eq:ckjkuujddhi1}
I_2=
-\int_{\partial \Omega_i } \chi_i^1(y) \eta \, n\cdot a\nabla \phi_i^{\loc}\,dy.
\end{equation}
Integrating by parts, we obtain
\begin{equation}
I_2=-\int_{\Omega_i} \eta  \chi_i^1 \diiv( a\nabla \phi_i^{\loc})-\int_{\Omega_i} \eta \nabla  \chi_i^1 a\nabla \phi_i^{\loc}
- \int_{\Omega_i}   \chi_i^1 \nabla \eta a\nabla \phi_i^{\loc}.
\end{equation}
Using Young's inequality we obtain
\begin{equation}
\big|\int_{\Omega_i} \eta \nabla  \chi_i^1 a\nabla \phi_i^{\loc}\big| \leq \frac{1}{4}\int_{\Omega} \nabla \chi_i^1 a \nabla\chi_i^1+ C
\|\nabla \phi_i^{\loc}\|_{L^2(\Omega_i^\frac{\delta_i}{2})}^2.
\end{equation}
Therefore
\begin{equation}\label{eq:2}
\begin{split}
|I_2|\leq & \|\chi_i^1\|_{L^\infty(\Omega)}\big \|\diiv( a\nabla \phi_i^{\loc})\big\|_{L^2(\Omega_i^\frac{\delta_i}{2})}+C\delta_{i}^{-1} \|\chi_i^1\|_{L^\infty(\Omega)} \|\nabla \phi_i^{\loc}\|_{L^2(\Omega_i^\frac{\delta_i}{2})}\\&+ C \|\nabla \phi_i^{\loc}\|_{L^2(\Omega_i^\frac{\delta_i}{2})}^2+\frac{1}{4}\int_{\Omega} \nabla \chi_i^1 a \nabla\chi_i^1.
\end{split}
\end{equation}
Similarly we have
\begin{equation}
I_1=\int_{x\in \Omega}\int_{y\in \partial \Omega_i }\chi_i^1(x) G(x,y)\eta\, n\cdot a \nabla \big(\diiv (a\nabla \phi_i^{\loc})\big)\,dy
\end{equation}
which, after integration by parts, leads us to
\begin{equation}
\begin{split}
I_1=&\int_{x\in \Omega}\int_{y\in  \Omega_i }\chi_i^1(x) G(x,y)\eta \diiv\Big( a \nabla \big(\diiv (a\nabla \phi_i^{\loc})\big) \Big)\,dy\\
&+ \int_{x\in \Omega}\int_{y\in  \Omega_i } \chi_i^1(x) G(x,y) \nabla \eta  a \nabla \big(\diiv (a\nabla \phi_i^{\loc})\big) \,dy\\
&+ \int_{x\in \Omega}\int_{y\in  \Omega_i } \chi_i^1(x) \nabla_y G(x,y) \eta  a \nabla \big(\diiv (a\nabla \phi_i^{\loc})\big) \,dy.
\end{split}
\end{equation}
Using the fact that $\eta \diiv\Big( a \nabla \big(\diiv (a\nabla \phi_i^{\loc})\big) \Big)=0$ (since the support of $\eta$ does not contain any coarse nodes), and using the bounds the $L^2$-norm of $G$ (see proof of Lemma \ref{lem:Theta}) as well as
\begin{equation}
\begin{split}
\int_{y\in  \Omega_i }  \big(\int_{x\in \Omega}\chi_i^1(x) \nabla_y G(x,y) \big)^2 \leq C \|\chi_i^1\|_{L^\infty(\Omega)}^2
\end{split}
\end{equation}
we obtain that
\begin{equation}\label{eq:3}
\begin{split}
I_1\leq  C(1+\delta_i^{-1}) \|\chi_i^1\|_{L^\infty(\Omega)}\Big\| a \nabla \big(\diiv (a\nabla \phi_i^{\loc})\big) \Big\|_{L^2(\Omega_i^\frac{\delta_i}{2})}.
\end{split}
\end{equation}
Combining \eqref{eq:1}, \eqref{eq:2} and \eqref{eq:3}, we have obtained that
\begin{equation}\label{eq:4}
\begin{split}
\int_{\Omega}\nabla \chi_i^1 a \nabla \chi_i^1 \leq & C \|\chi_i^1\|_{L^\infty(\Omega)}\big \|\diiv( a\nabla \phi_i^{\loc})\big\|_{L^2(\Omega_i^\frac{\delta_i}{2})}+C \|\nabla \phi_i^{\loc}\|_{L^2(\Omega_i^\frac{\delta_i}{2})}^2\\
&+C(1+\delta_i^{-1}) \|\chi_i^1\|_{L^\infty(\Omega)}\Big\| \nabla \big(\diiv (a\nabla \phi_i^{\loc})\big) \Big\|_{L^2(\Omega_i^\frac{\delta_i}{2})}
\\&+ C\delta_{i}^{-1} \|\chi_i^1\|_{L^\infty(\Omega)} \|\nabla \phi_i^{\loc}\|_{L^2(\Omega_i^\frac{\delta_i}{2})}.
\end{split}
\end{equation}
Therefore, using Young's inequality, we have obtained that
\begin{equation}\label{eq:4uiuuhuhiu}
\begin{split}
\big\|\nabla \chi_i^1\|_{L^2(\Omega)} \leq & C (1+\delta_i^{-1}) \|\chi_i^1\|_{L^\infty(\Omega)}+\big \|\diiv( a\nabla \phi_i^{\loc})\big\|_{L^2(\Omega_i^\frac{\delta_i}{2})}+ C \|\nabla \phi_i^{\loc}\|_{L^2(\Omega_i^\frac{\delta_i}{2})}\\
&+\Big\|  \nabla \big(\diiv (a\nabla \phi_i^{\loc})\big) \Big\|_{L^2(\Omega_i^\frac{\delta_i}{2})}.
\end{split}
\end{equation}
We deduce \eqref{eq:4uhuhkjkiu}  by using  \eqref{eq:unifchi}, \eqref{eq:divachi} and \eqref{eq:nabchi} to bound
$\|\chi_i^1\|_{L^\infty(\Omega)}$, $\|\nabla \phi_i^{\loc}\|_{L^2(\Omega_i^\frac{\delta_i}{2})}$ and $\Big\|  \nabla \big(\diiv (a\nabla \phi_i^{\loc})\big) \Big\|_{L^2(\Omega_i^\frac{\delta_i}{2})}$ in \eqref{eq:4uiuuhuhiu}.
\end{proof}

Combining lemmas \ref{lem:lemidediff}, \ref{lem:contchi2} and \ref{lem:contchi1nab} with Proposition \ref{prop:contlammaxp} we deduce the following lemma.

\begin{Lemma}\label{lem:lemidediffcontest}
For $0<\delta_i\leq 1$, it holds true that
\begin{equation}\label{eq:yguydhtwgu3ygee}
\begin{split}
\big\|\nabla (\phi_i-\phi_i^{\loc})\big\|_{L^2(\Omega)}\leq  C N^2  H_{\min}^{-4} \delta_i^{-3}
 \Big( & \big\|\diiv (a\nabla \phi_i^{\loc})\big\|_{L^2(\Omega_i^\frac{3\delta_i}{4})}
+ \big\| \phi_i^{\loc}\big\|_{L^2(\Omega_i^\frac{3\delta_i}{4})}\Big).
\end{split}
\end{equation}
\end{Lemma}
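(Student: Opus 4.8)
The plan is to assemble the four ingredients established just before the statement. By Lemma~\ref{lem:lemidediff} we have $\phi_i-\phi_i^{\loc}=\chi_i=\chi_i^1+\chi_i^2$, so by the triangle inequality it suffices to bound $\|\nabla\chi_i^1\|_{L^2(\Omega)}$ and $\|\nabla\chi_i^2\|_{L^2(\Omega)}$ separately, each by the right-hand side of~\eqref{eq:yguydhtwgu3ygee}. For $\chi_i^1$ there is nothing to do beyond Lemma~\ref{lem:contchi1nab}, which already gives
\[
\|\nabla\chi_i^1\|_{L^2(\Omega)}\leq C\delta_i^{-3}\Big(\big\|\diiv(a\nabla\phi_i^{\loc})\big\|_{L^2(\Omega_i^{3\delta_i/4})}+\big\|\phi_i^{\loc}\big\|_{L^2(\Omega_i^{3\delta_i/4})}\Big);
\]
the prefactor here is $\leq C N^2 H_{\min}^{-4}\delta_i^{-3}$ since $N\geq1$ and $H_{\min}\leq1$ (recall $\Omega$ is normalized to have diameter at most $1$).

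For $\chi_i^2$ I would start from Lemma~\ref{lem:contchi2}, i.e. $\|\nabla\chi_i^2\|_{L^2(\Omega)}\leq N\,\|\chi_i^1\|_{L^\infty(\Omega)}\max_{j\in\cN}\sqrt{P_{jj}}$. Since $P$ is symmetric positive definite, $\max_{j}P_{jj}\leq\lambda_{\max}(P)$, and Proposition~\ref{prop:contlammaxp} gives $\lambda_{\max}(P)\leq C N H_{\min}^{-4}$, hence $\max_j\sqrt{P_{jj}}\leq C N^{1/2}H_{\min}^{-2}$. Inserting the $L^\infty$ estimate of Lemma~\ref{lem:contchi1},
\[
\|\chi_i^1\|_{L^\infty(\Omega)}\leq C\delta_i^{-2}\big\|\diiv(a\nabla\phi_i^{\loc})\big\|_{L^2(\Omega_i^{3\delta_i/4})}+C\delta_i^{-5/2}\big\|\phi_i^{\loc}\big\|_{L^2(\Omega_i^{3\delta_i/4})},
\]
yields $\|\nabla\chi_i^2\|_{L^2(\Omega)}\leq C N^{3/2}H_{\min}^{-2}\delta_i^{-5/2}\big(\|\diiv(a\nabla\phi_i^{\loc})\|_{L^2(\Omega_i^{3\delta_i/4})}+\|\phi_i^{\loc}\|_{L^2(\Omega_i^{3\delta_i/4})}\big)$. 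Using $\delta_i\leq1$, $N\geq1$, $H_{\min}\leq1$ I absorb $\delta_i^{-5/2}\leq\delta_i^{-3}$, $N^{3/2}\leq N^2$ and $H_{\min}^{-2}\leq H_{\min}^{-4}$, and then summing with the bound for $\chi_i^1$ gives~\eqref{eq:yguydhtwgu3ygee}.

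There is no genuine difficulty here: the analytic content — the boundary-layer representation of $\phi_i-\phi_i^{\loc}$ in Lemma~\ref{lem:lemidediff}, the Caccioppoli/reverse-Poincar\'e and De Giorgi--Nash--Moser pointwise bounds behind Lemmas~\ref{lem:contchi1} and~\ref{lem:contchi1nab}, and the lower bound on $\lambda_{\min}(\Theta)$ behind Proposition~\ref{prop:contlammaxp} — is already in place. The only point requiring care is the bookkeeping of exponents: one should check that $\delta_i^{-3}$ is the worst negative power of $\delta_i$ that appears (it comes from Lemma~\ref{lem:contchi1nab}, while the $\chi_i^2$ route contributes only $\delta_i^{-5/2}$), and that $N^2$ dominates the factor $N\cdot N^{1/2}$ produced by the node sum in Lemma~\ref{lem:contchi2} times $\sqrt{\lambda_{\max}(P)}$, with the leftover $H_{\min}^{-2}$ and $N^{1/2}$ harmlessly enlarged to $H_{\min}^{-4}$ and $N$ under the standing hypotheses $\delta_i\leq1$, $H_{\min}\leq1$.
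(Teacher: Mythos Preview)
Your proposal is correct and follows exactly the approach the paper takes: the paper's proof is the single sentence ``Combining lemmas \ref{lem:lemidediff}, \ref{lem:contchi2} and \ref{lem:contchi1nab} with Proposition \ref{prop:contlammaxp} we deduce the following lemma,'' and you have simply spelled out the bookkeeping (including the implicit use of Lemma~\ref{lem:contchi1} to feed $\|\chi_i^1\|_{L^\infty(\Omega)}$ into Lemma~\ref{lem:contchi2}, and the harmless enlargements $N^{3/2}\leq N^2$, $H_{\min}^{-2}\leq H_{\min}^{-4}$, $\delta_i^{-5/2}\leq\delta_i^{-3}$).
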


\subsection{A posteriori error estimates.}

\begin{Theorem}\label{thm:errorbasisloc}
Let $u$ be the solution of equation \eqref{eqn:scalar}, and $u^{H,\loc}$ be the finite element solution of \eqref{eqn:scalar} over the linear space spanned by the elements $\{\phi_i^{\loc},\, i=1,\ldots,N\}$ identified in \eqref{eqn:convexoptlocal}. Assume that $\min_{i\in \cN}\delta_i\geq H_{\min}/10$ (where $\delta_i$ is defined in \eqref{eq:defdelta_i} and $H_{\min}$ in \eqref{eq:defhmin}) and that $\max_{i\in \cN}\delta_i\leq H \leq 1$.
It holds true that
 \begin{equation}
	\label{eq:errorlocalbasisthm}
 \|u-u^{H,\loc}\|_{\H^1_0(\Omega)} \leq C \|g\|_{L^2(\Omega)} ( H  +  E)
 \end{equation}
 where the constant $C$ depends only on $\lambda_{\min}(a)$, $\lambda_{\max}(a)$ and $\Omega$, and
 \begin{equation}
E=   H_{\min}^{-7-3d} \max_{i \in \cN}
 \Big(  \big\|\diiv (a\nabla \phi_i^{\loc})\big\|_{L^2(\Omega_i^{H})}
+ \big\| \phi_i^{\loc}\big\|_{L^2(\Omega_i^H)}\Big)
 \end{equation}
 where $\Omega_i^H$ is defined in \eqref{eq:vgdvy3ve3e}.
\end{Theorem}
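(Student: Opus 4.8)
The plan is to derive \eqref{eq:errorlocalbasisthm} by feeding the control of the basis perturbation obtained in Lemma \ref{lem:lemidediffcontest} into the a posteriori estimate of Lemma \ref{lem:errorbasis}, and then to absorb all the combinatorial factors ($N$ and $\delta_i$) into powers of $H_{\min}$ using the two standing hypotheses $\min_{i\in\cN}\delta_i\geq H_{\min}/10$ and $\max_{i\in\cN}\delta_i\leq H\leq 1$. By Lemma \ref{lem:errorbasis} the task reduces immediately to bounding
\begin{equation}
N \max_{i\in \cN}\|\phi_i-\phi_i^{\loc}\|_{\H^1_0(\Omega)}\leq C E,
\end{equation}
so the whole argument is an estimate of this one quantity.

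First I would apply Lemma \ref{lem:lemidediffcontest}: since $\delta_i\le \max_{j}\delta_j\le H\le 1$ its hypothesis is satisfied, and combined with the Poincar\'e inequality on $\Omega$ (to pass from $\|\nabla\cdot\|_{L^2(\Omega)}$ to $\|\cdot\|_{\H^1_0(\Omega)}$) it gives
\begin{equation}
\|\phi_i-\phi_i^{\loc}\|_{\H^1_0(\Omega)}\le C N^2 H_{\min}^{-4}\delta_i^{-3}\Big(\|\diiv(a\nabla\phi_i^{\loc})\|_{L^2(\Omega_i^{3\delta_i/4})}+\|\phi_i^{\loc}\|_{L^2(\Omega_i^{3\delta_i/4})}\Big).
\end{equation}
Next I would use the lower bound $\delta_i\ge H_{\min}/10$ to replace $\delta_i^{-3}$ by a constant multiple of $H_{\min}^{-3}$, and the upper bound $\delta_i\le H$ together with the monotonicity of $\delta\mapsto\Omega_i^\delta$ (evident from \eqref{eq:vgdvy3ve3e}, since $3\delta_i/4<\delta_i\le H$ forces $\Omega_i^{3\delta_i/4}\subseteq\Omega_i^{H}$) to enlarge the domain of integration. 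This yields
\begin{equation}
\|\phi_i-\phi_i^{\loc}\|_{\H^1_0(\Omega)}\le C N^2 H_{\min}^{-7}\Big(\|\diiv(a\nabla\phi_i^{\loc})\|_{L^2(\Omega_i^{H})}+\|\phi_i^{\loc}\|_{L^2(\Omega_i^{H})}\Big).
\end{equation}

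It then remains to control $N$ itself. Because the coarse nodes are pairwise $H_{\min}$-separated (definition \eqref{eq:defhmin}), the balls $B(x_i,H_{\min}/2)$ are pairwise disjoint and contained in a fixed $O(1)$-volume neighbourhood of $\Omega$ (recall $\diam\Omega\le 1$ after rescaling); comparing volumes gives $N\le C_{d,\Omega}H_{\min}^{-d}$, hence $N\cdot N^2\le C H_{\min}^{-3d}$. Multiplying the last display by $N$ and taking the maximum over $i\in\cN$ produces exactly $C H_{\min}^{-7-3d}\max_{i\in\cN}\big(\|\diiv(a\nabla\phi_i^{\loc})\|_{L^2(\Omega_i^{H})}+\|\phi_i^{\loc}\|_{L^2(\Omega_i^{H})}\big)=C E$, and Lemma \ref{lem:errorbasis} then closes the argument; the constant $C$ depends only on $\lambda_{\min}(a)$, $\lambda_{\max}(a)$ and $\Omega$ since every ingredient does. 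I do not expect a genuine obstacle here: all the analytic work has already been done in Lemmas \ref{lem:contchi1}--\ref{lem:lemidediffcontest}, and the only points requiring care are (i) the domain inclusion $\Omega_i^{3\delta_i/4}\subseteq\Omega_i^{H}$ and (ii) the bookkeeping of the powers of $H_{\min}$ so that $-4$, $-3$ and $-3d$ add up to the stated exponent $-7-3d$. The one ingredient not isolated earlier is the packing bound $N\le C H_{\min}^{-d}$, which is elementary.
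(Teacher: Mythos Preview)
Your proposal is correct and follows exactly the approach of the paper's own proof, which states in one line that the theorem is a straightforward consequence of Lemma~\ref{lem:lemidediffcontest}, Lemma~\ref{lem:errorbasis}, and the inequality $N\le C H_{\min}^{-d}$. You have simply supplied the details the paper omits: the domain inclusion $\Omega_i^{3\delta_i/4}\subseteq\Omega_i^{H}$, the replacement $\delta_i^{-3}\le C H_{\min}^{-3}$, and the bookkeeping $N\cdot N^2 H_{\min}^{-4} H_{\min}^{-3}\le C H_{\min}^{-7-3d}$.
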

\begin{proof}
 Theorem \ref{thm:errorbasisloc} is a straightforward consequence of Lemma \ref{lem:lemidediffcontest} and Lemma \ref{lem:errorbasis} and the inequality
 $N\leq C H_{\min}^{-d}$.
 \end{proof}

\begin{Remark}
Observe that there is no loss of generality in requiring that $\min_{i\in \cN}\delta_i \geq  H_{\min}/10$ and that $\max_{i\in \cN}\delta_i\leq H \leq 1$.
 Note also that $\min_{i\in \cN}\delta_i \geq  H_{\min}/10$  corresponds to the condition that the minimum distance between $\partial \Omega_i \cap \Omega$ and the set of coarse nodes is at least $H_{\min}/10$. Although we need this assumption for the simplicity of the proof, numerical experiments suggest that it is not needed for convergence with an optimal rate.
\end{Remark}
\begin{Remark}
Numerical experiments show that $\big\|\diiv (a\nabla \phi_i^{\loc})\big\|_{L^2(\Omega_i^{H})}
+ \big\| \phi_i^{\loc}\big\|_{L^2(\Omega_i^H)}$ decays exponentially fast as a function of $n_i$ where $n_i$ is the minimum number of (coarse) edges (i.e., number of layers of triangles) separating
$x_i$ from $\partial \Omega_i \cap\Omega$. Hence if $n_i$ is of the order of $\ln (1/H)$ we can derive a posteriori error estimates from the
posterior observation that $E\leq H$. A priori estimates can be derived  by bounding the exponential decay of $\big\|\diiv (a\nabla \phi_i^{\loc})\big\|_{L^2(\Omega_i^{H})}
+ \big\| \phi_i^{\loc}\big\|_{L^2(\Omega_i^H)}$ as a function of the number of layers of triangles surrounding each node $x_i$ (work in progress). These a priori error estimates would guarantee $ \|u-u^{H,\loc}\|_{\H^1_0(\Omega)} \leq C \|g\|_{L^2(\Omega)} H$ provided that $\min_{i \in \cN}\dist(x_i,\partial \Omega_i \cap \Omega)\geq C^* H  \ln \frac{1}{H}$ for some constant $C^*$ depending on $\lambda_{\min}(a)$ and $\lambda_{\max}(a)$. We refer to this property as \emph{super-localization}.
Note that each (super-localized) basis function $\phi_i^{\loc}$ is the solution of a sparse/banded/nearly diagonal linear system.
This property is essential in keeping the computational cost minimal (see  Subsection \ref{subsec:compcost}).
Note also that since $\Omega_i^H$ is defined as a layer of thickness $H$ near the portion of the boundary of $\Omega_i$ that is contained in $\Omega$ (see Figure \ref{fig:subdomain}) $\Omega_i^H$ remains at distance $\mathcal{O}(n_i H)$ from $x_i$ even when $x_i$ is close to the boundary of $\Omega$ (and there are no ``boundary effects'' impacting the accuracy of the method).

Note also that because of the exponential decay of $\big\|\diiv (a\nabla \phi_i^{\loc})\big\|_{L^2(\Omega_i^{H})}
+ \big\| \phi_i^{\loc}\big\|_{L^2(\Omega_i^H)}$ we do not
 require $H$ of the same order as $H_{\min}$. In particular, one can increase interpolation points in local regions
where high accuracy is required without loss of accuracy provided that $H_{\min}$ remains bounded from below by a power of $H$.
\end{Remark}

\section{On time dependent problems}\label{sec:timedependent}
As shown in sections 4 and 5 of \cite{OwZh:2011} the accuracy of the basis elements $\phi_i$ and $\phi_i^{\loc}$ remains unchanged when those elements are used to approximate the solutions of the parabolic and hyperbolic equations associated with $-\diiv(a\nabla)$, i.e., for equations of the form
\begin{equation}\label{parabolicscalarproblem0}
\begin{cases}
   \partial_t u(x,t) -\diiv \Big(a(x)  \nabla u(x,t)\Big)=g(x,t) \quad  (x,t) \in \Omega_T;\, g \in L^2(\Omega_T), \\
    u=0 \quad \text{on}\quad \partial \Omega_T,
    \end{cases}
\end{equation}
and
\begin{equation}\label{huperboliccalarproblem0}
\begin{cases}
   \rho(x) \partial_{t}^2 u(x,t) -\diiv \Big(a(x)  \nabla u(x,t)\Big)=g(x,t) \quad  (x,t) \in \Omega_T;\, g \in L^2(\Omega_T), \\
    u=0 \quad \text{on}\quad  \partial \Omega_T,\\
    \partial_t u=0  \quad \text{on}\quad  \Omega \times \{t=0\}.
    \end{cases}
\end{equation}
More precisely the solutions of \eqref{parabolicscalarproblem0} and \eqref{huperboliccalarproblem0} are approximated, via finite element methods, using the linear space spanned by the interpolation basis $\phi_i^{\loc}$ as test and solution space. Note that the  approximate solutions are of the form
\begin{equation}
u^H(x,t)=\sum_i c_i(t) \phi_i^{\loc}(x).
\end{equation}
We refer to sections 4 and 5 of \cite{OwZh:2011} for  precise statements and proofs.

\begin{figure}[tp]
	\begin{center}
		\subfigure[Primal and dual mesh]{
			\includegraphics[width=0.2\textwidth]{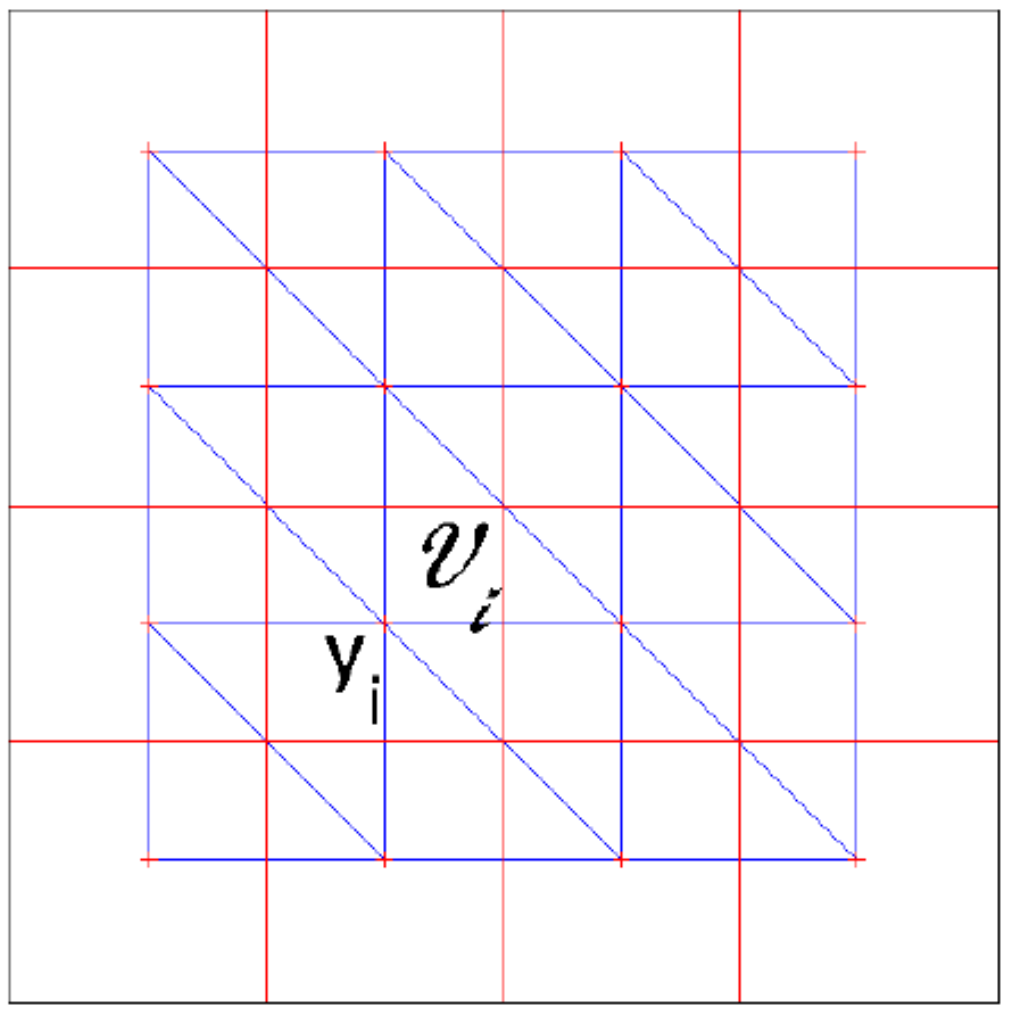}\label{fig:mesh}
		}
		\subfigure[$x_i$ and $\Omega_i$ with $l=1$]{
			\includegraphics[width=0.2\textwidth]{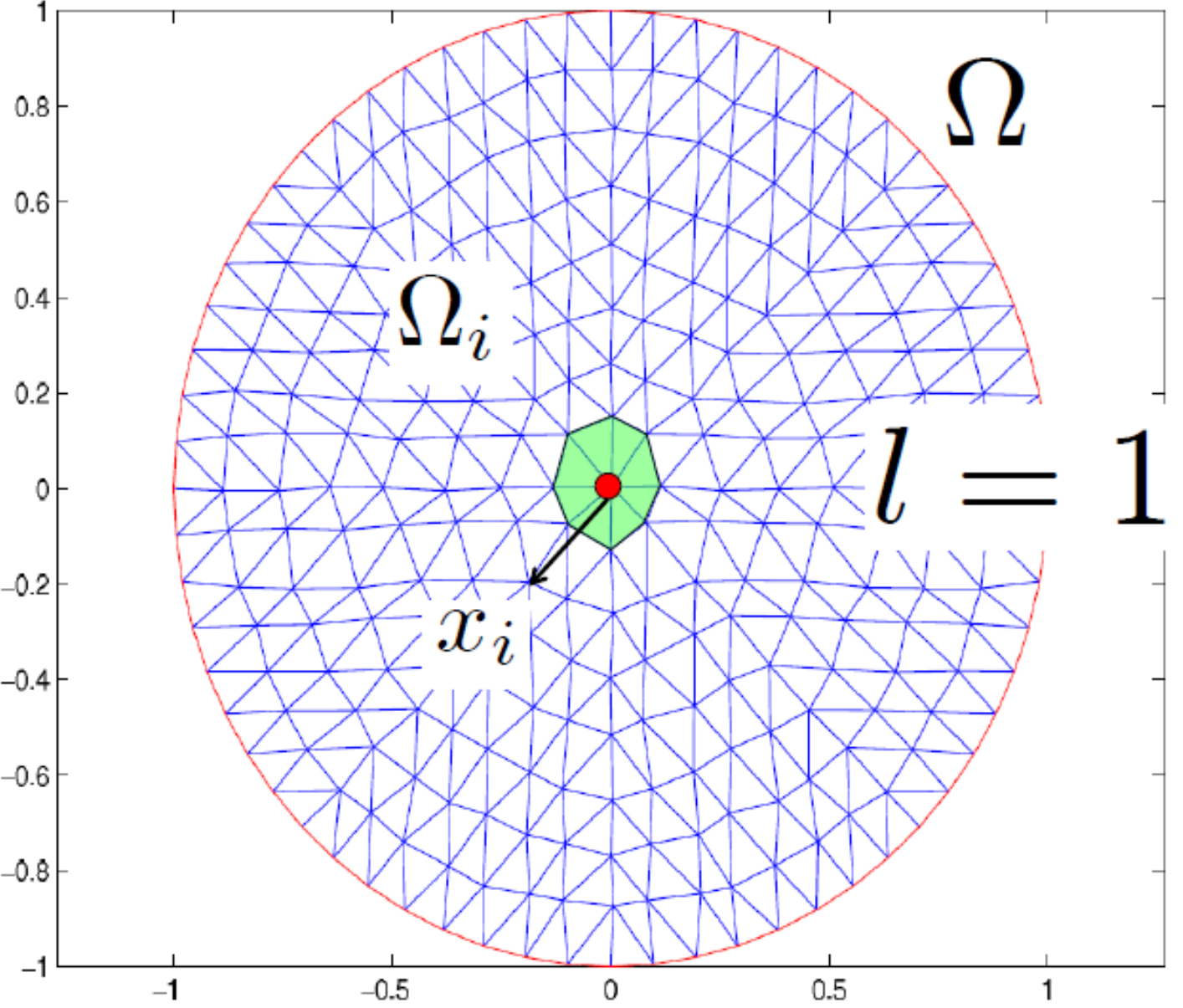}\label{fig:layer1}
		}
		\subfigure[$\Omega_i$ with $l=2$]{
			\includegraphics[width=0.2\textwidth]{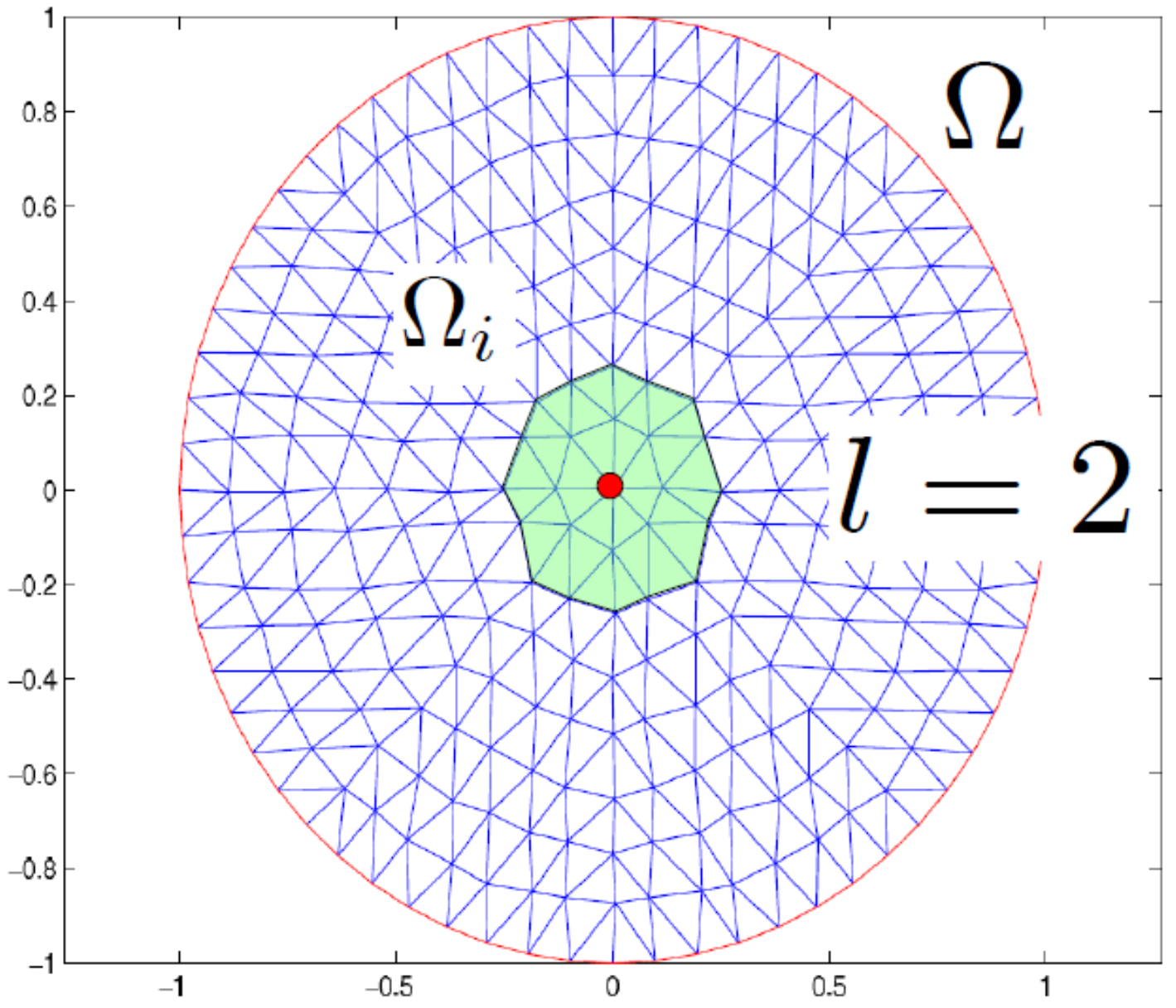} \label{fig:layer2}
		}
		\subfigure[$\Omega_i$ with $l=3$]{
			\includegraphics[width=0.2\textwidth]{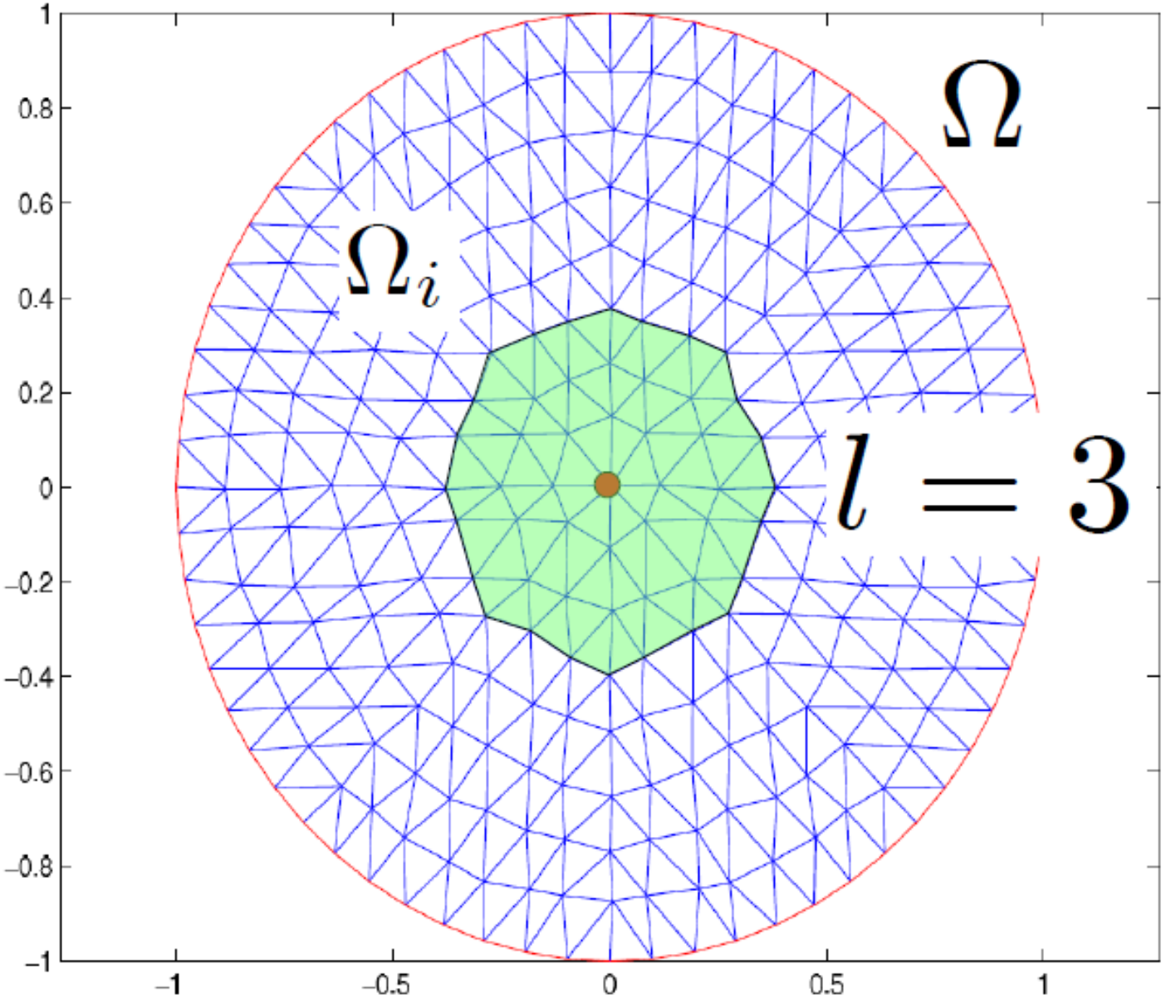} \label{fig:layer3}
		}
		\caption{Mesh details.}
		\label{fig:meshdetails}
	\end{center}
\end{figure}

\section{Numerical implementation and experiments}\label{sec:numexp}

\subsection{Implementation}
Our method is, in its formal description, meshless in the sense that only the locations of the points $(x_j)_{j\in \cN}$ are needed to construct the rough polyharmonic splines $\phi_i$ or $\phi_i^{\loc}$ as solutions of  the minimization problem \eqref{eqn:convexopt}
or \eqref{eqn:convexoptlocal}.

For numerical implementation, we need to formulate and solve the problem in the
full discrete setting. Although, for the sake of conciseness we have provided an error analysis only in the continuum setting, this analysis can be extended to the fully discrete setting without major difficulties.

We will now describe the implementation of our method in the discrete setting.
Let $\mathcal{T}_H$ be a tessellation of $\Omega$ of resolution $H$ with nodes $\{x_i\}$.
Assume
that $h\ll H$ and that $\mathcal{T}_h$ is obtained by repeated refinements of $\mathcal{T}_H$. Note that the accuracy of our method is independent from the regularity of  $\mathcal{T}_H$ or the aspect ratios of its elements (triangles for $d=2$).

Let $W_h(\Omega)$ be the set of piecewise linear functions on $\mathcal{T}_h$ with zero Dirichlet boundary condition on
$\partial \Omega$. For $\forall u\in W_h(\Omega)$, we can define $g_u$, which approximates
$-\diiv a \nabla u$ on $\Omega$. $g_u$ is piecewise constant on the dual mesh of $\mathcal{T}_h$, and can
be defined by the following finite volume formulation,
\begin{equation}
g_u(y_i)=\frac{1}{|\V_i|} \int_{\Omega} \nabla 1_{\V_i} a \nabla u
\end{equation}
where $\V_i$ is the dual Voronoi cell associated with the node $y_i$ of $\mathcal{T}_h$ and $1_{\V_i}$ is the characteristic function which equals to one on $\V_i$ and zero elsewhere.

Subfigure
\ref{fig:mesh} shows the primal (finite element) mesh and its dual mesh used for the numerical implementation in this paper: the red dots are the nodes $x_i$ of the triangulation $\T_H$. In Subfigure
\ref{fig:mesh},  the red square around each $y_i$ is the Voronoi cell $\V_i$ associated with the node $y_i$, and the dual mesh is composed of such cells.

With this definition, the local basis $\phi_{i,h}^\loc$ can be readily computed through the variational
formulation \eqref{eqn:fdlocalbasis}.

\begin{equation}\label{eqn:fdlocalbasis}
\begin{cases}
\text{Minimize }  \int_{\Omega}|g_\phi|^2\dx\\
\text{Subject to } \begin{cases} \phi\in W_h(\Omega_i), \phi=0 \text{ on }\pp\Omega_i\cup\Omega\backslash\Omega_i \\
\phi(x_j)=\delta_{ij}, \,j\in\cN.
\end{cases}
\end{cases}
\end{equation}

\begin{figure}[tp]
	\begin{center}
		\subfigure[$\phi_i$]{
			\includegraphics[width=0.4\textwidth]{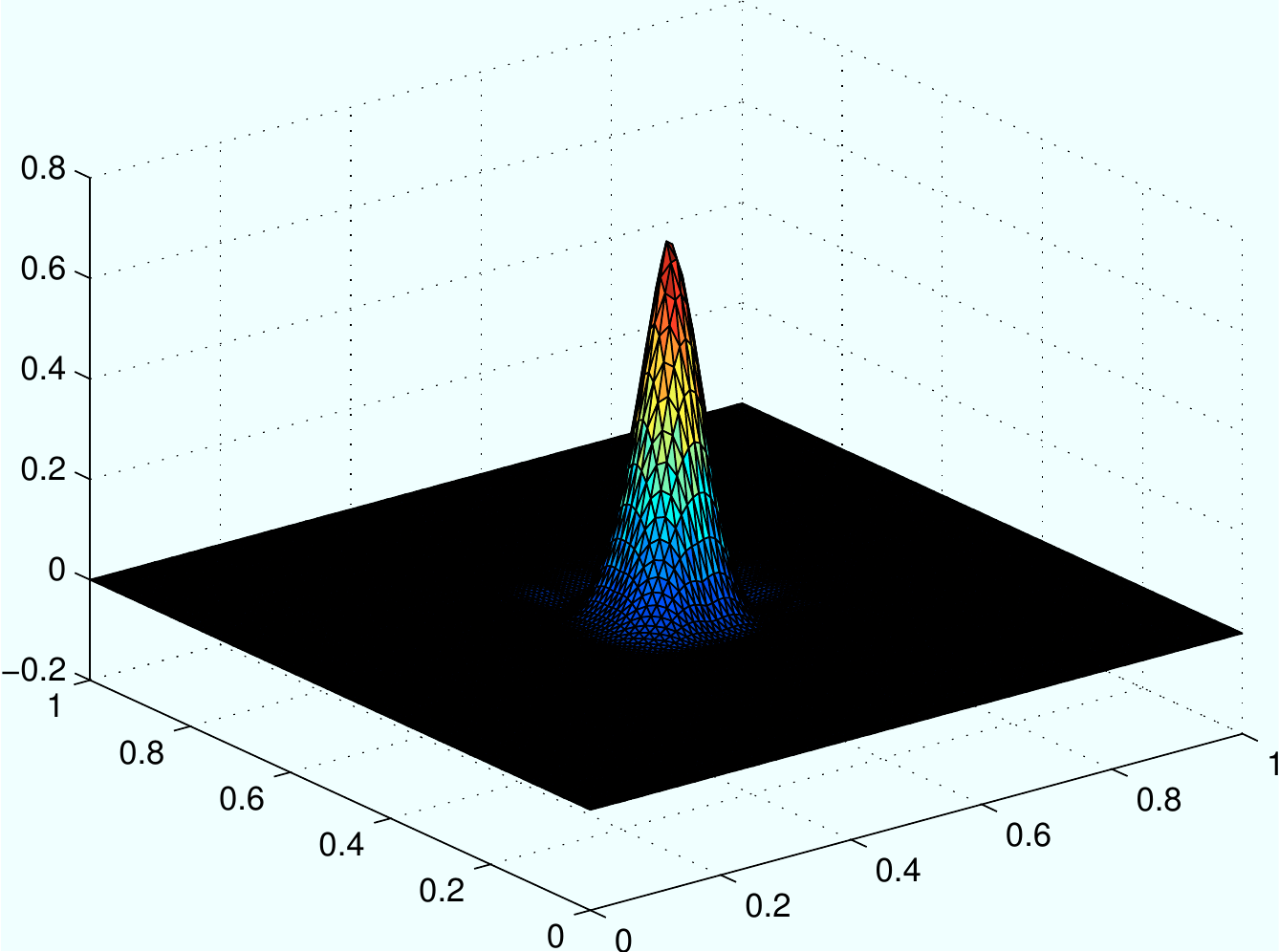}\label{fig:globalbasis2d}
		}
		\subfigure[Slice of $\phi_i$ along $x$ axis]{
			\includegraphics[width=0.4\textwidth]{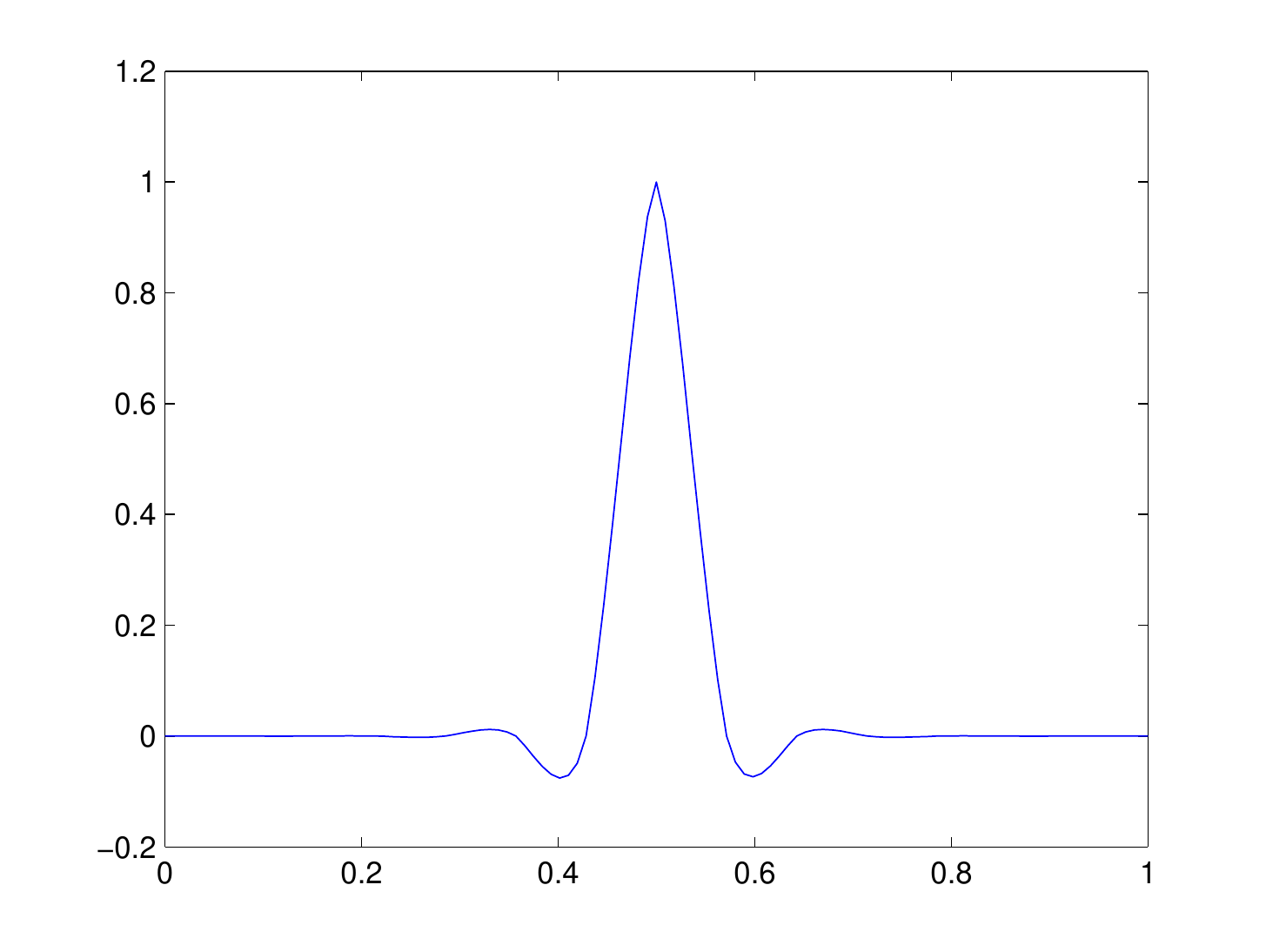}\label{fig:globalbasis2dslice}
		}
		\subfigure[$\|u-u^{H,\loc}\|$ in $\L^2$, $\H^1$, $\L^\infty$ norms with respect to the number of layers $l$]{
			\includegraphics[width=0.90\textwidth]{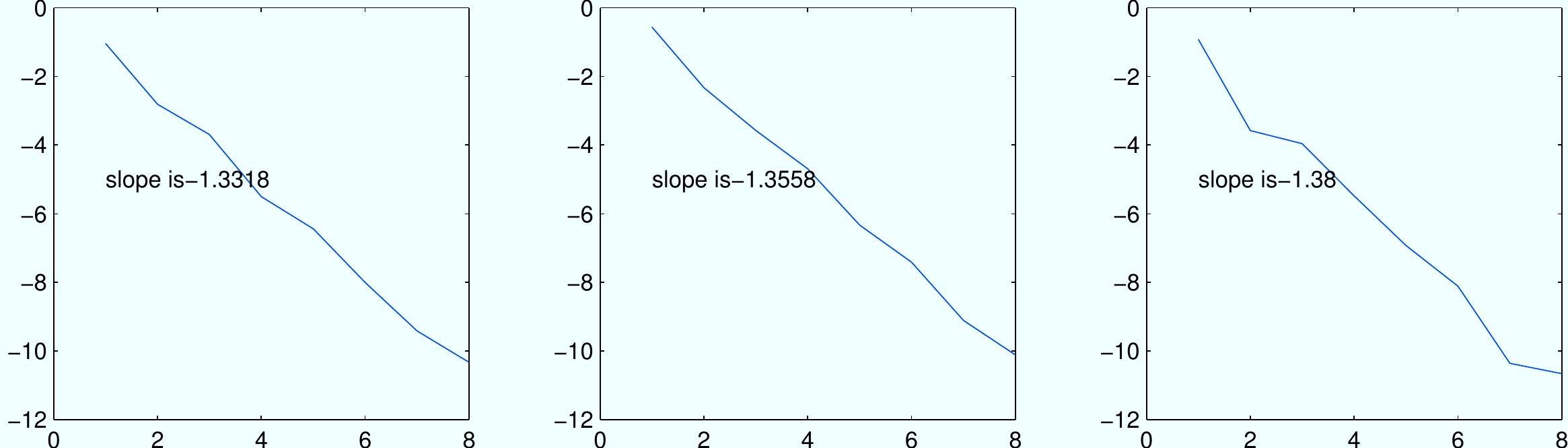}\label{fig:decayratelocalbasis2d}
		}
		\subfigure[$\|u-u^{H,\loc}\|_{L^2(\Omega)}$]{
			\includegraphics[width=0.40\textwidth]{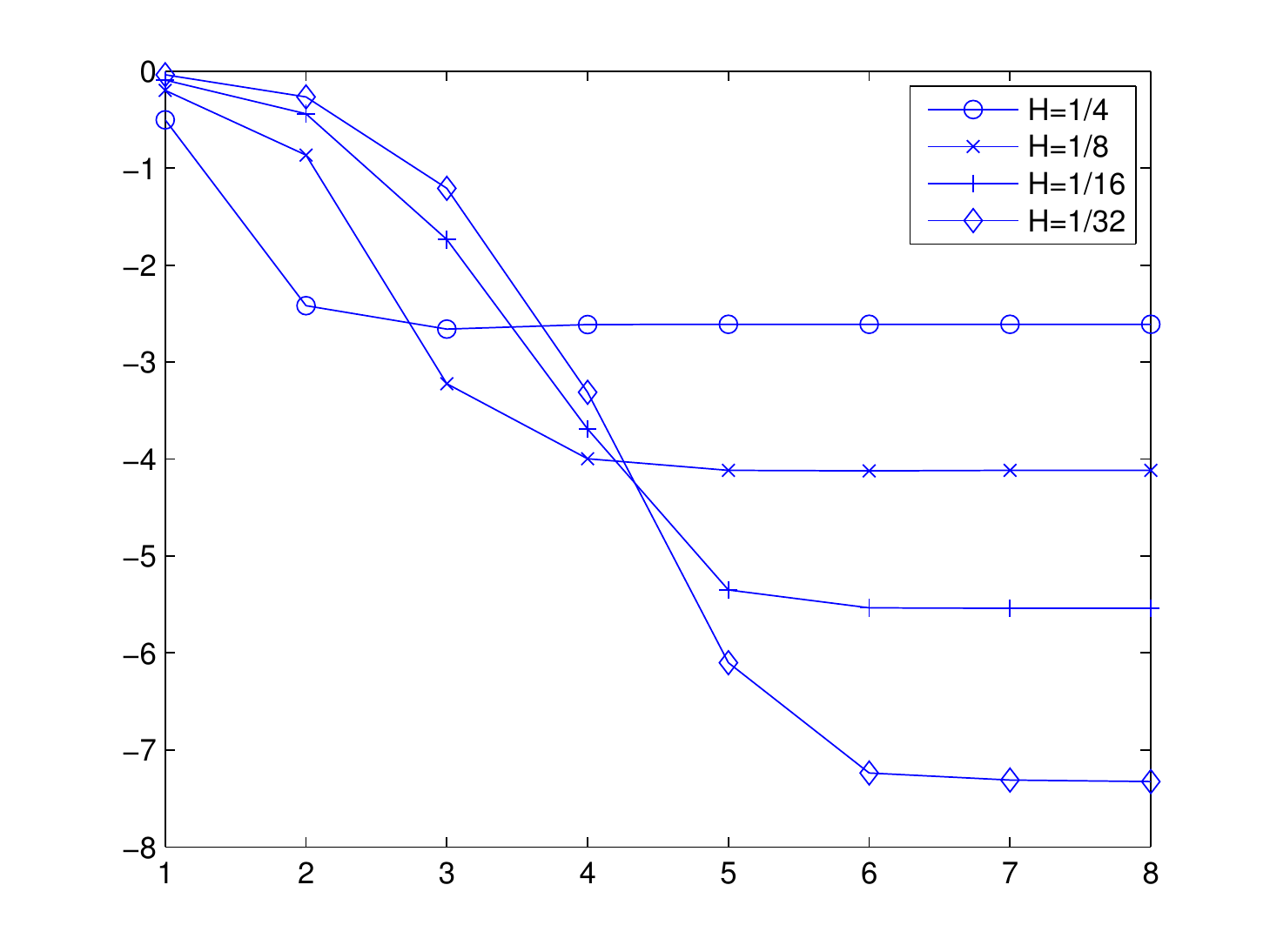} \label{fig:errordecay2da}
		}
		\subfigure[$\|u-u^{H,\loc}\|_{\H^1(\Omega)}$]{
			\includegraphics[width=0.40\textwidth]{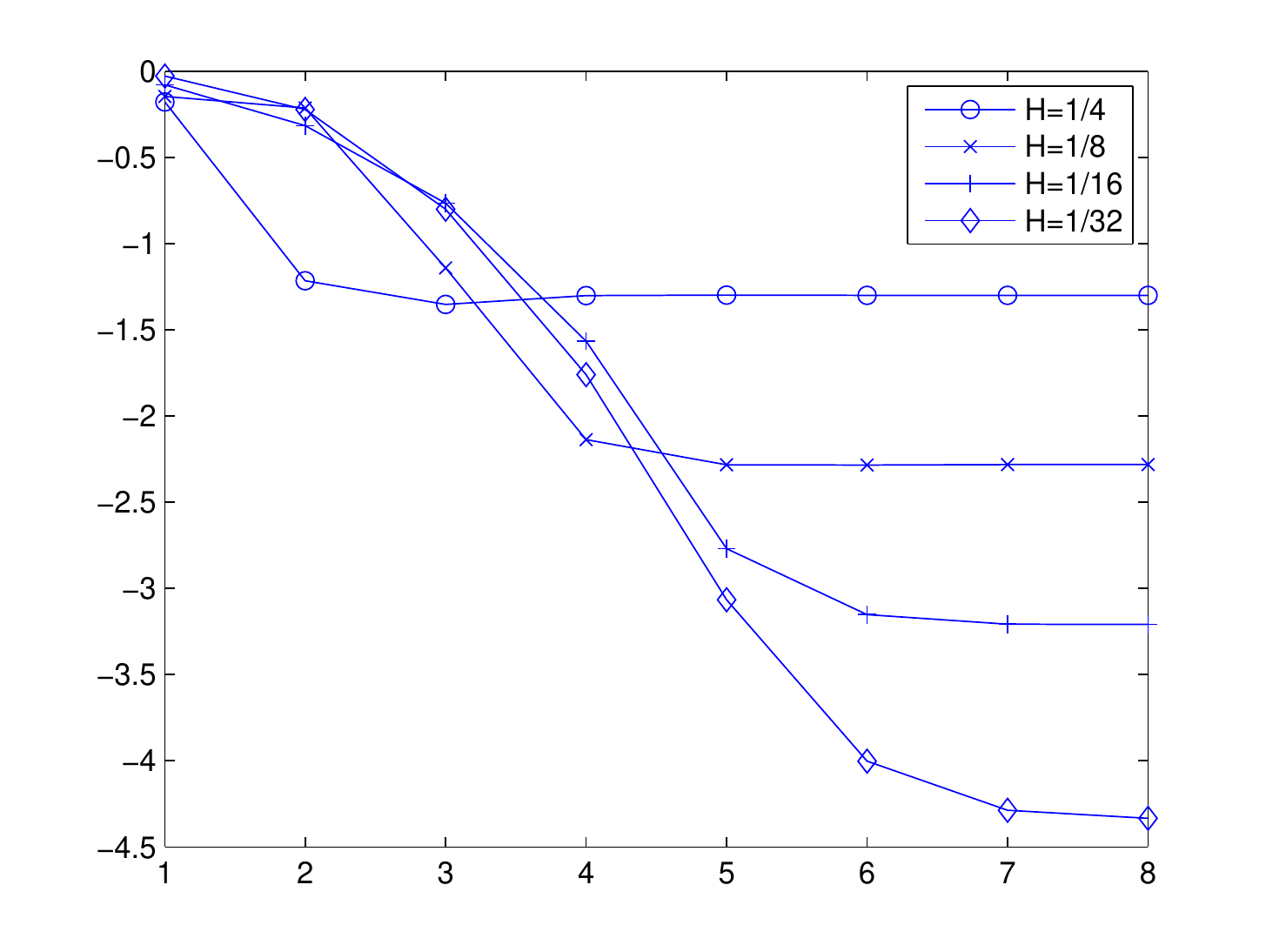} \label{fig:errordecay2db}
		}
		\caption{Two dimensional example \eqref{ex1}.}
		\label{fig:2d}
	\end{center}
\end{figure}

\subsection{A two dimensional example.}\label{subsec:2dex}
In this example  we consider $\Omega=(0,1)\times (0,1)$ and
\begin{equation}\label{ex1}
\begin{split}
a(x):=&\frac{1}{6}(\frac{1.1+\sin(2\pi x/\epsilon_{1})}{1.1+\sin(2\pi
y/\epsilon_{1})}+\frac{1.1+\sin(2\pi y/\epsilon_{2})}{1.1+\cos(2\pi
x/\epsilon_{2})}+\frac{1.1+\cos(2\pi x/\epsilon_{3})}{1.1+\sin(2\pi
y/\epsilon_{3})}+\\
&\frac{1.1+\sin(2\pi y/\epsilon_{4})}{1.1+\cos(2\pi
x/\epsilon_{4})}+\frac{1.1+\cos(2\pi x/\epsilon_{5})}{1.1+\sin(2\pi
y/\epsilon_{5})}+\sin(4x^{2}y^{2})+1),
\end{split}
\end{equation}
where $\epsilon_{1}=\frac{1}{5}$,$\epsilon_{2}=\frac{1}{13}$,$\epsilon_{3}=
\frac{1}{17}$,$\epsilon_{4}=\frac{1}{31}$,$\epsilon_{5}=\frac{1}{65}$.

Note that $a$, as defined by \eqref{ex1}, corresponds  to the one used in  \cite{OwZh:2007a} (Example 1 of Section 3) and \cite{MiYu06}.
The number of coarse nodes is $N=N_c\times N_c$ with $N_c=32$. The (regular) coarse mesh $\cT_H=\cT^0$ is obtained by first (uniformly) subdividing $\Omega$ into $N_c\times N_c$ rectangles, then partitioning each rectangle into two triangles along the direction $(1,1)$. We can further
refine the mesh to obtain $\cT^1$, $\cT^2$, ..., $\cT^k$. Global equations are solved on a fine triangulation with $66049$ nodes and $131072$ triangles.
We compute localized elements $\phi_i^{\loc}$ on localized sub-domains $\Omega_i^l$ defined by adding $l$ layers of coarse triangles around $x_i$. More precisely $\Omega_i^1$ is the union of triangles sharing $x_i$ as a node and $\Omega_i^{l+1}$ is the union of $\Omega_i^l$ with coarse triangles sharing a node with a triangle contained in $\Omega_i^l$. We refer to subfigures \ref{fig:layer1}, \ref{fig:layer2} and \ref{fig:layer3} for an illustration of $\Omega_i^l$ with $l=1,2,3$.

We refer to Figure \ref{fig:2d} for the results of our numerical experiment.
Subfigure \ref{fig:globalbasis2d} shows $\phi_i$ (one element of the global basis in dimension two).
Subfigure \ref{fig:globalbasis2dslice} shows a slice of $\phi_i$ along the $x$-axis.
Subfigure
\ref{fig:decayratelocalbasis2d} shows $\|\phi_i-\phi_i^{\loc}\|$ in $\L^2$, $\H^1$, and $\L^\infty$ norms in the log scale as a function of the number of layers $l=1,\dots,8$ used in the localization. Note that   $\|\phi_i-\phi_i^{\loc}\|$  decays exponentially fast with $l$
and that the decay rate is nearly independent from the choice of the norm.
Subfigurea \ref{fig:errordecay2da} and \ref{fig:errordecay2db} show $\|u-u^{H,\loc}\|_{L^2(\Omega)}$ and $\|u-u^{H,\loc}\|_{\H^1(\Omega)}$ in log-scale as a function of the number of layers $l=1,\dots,8$ (defining the size of localized sub-domains) for different values of (coarse) mesh resolution $H=1/4, 1/8, 1/16, 1/32$. Note that
as the number of layers increases, $\|u-u^{H,\loc}\|$  decreases first and then quickly saturates around $O(H)$.

\begin{figure}[tp]
	\begin{center}
		\subfigure[$u(x,T)$ for $T=1$]{
			\includegraphics[width=0.40\textwidth]{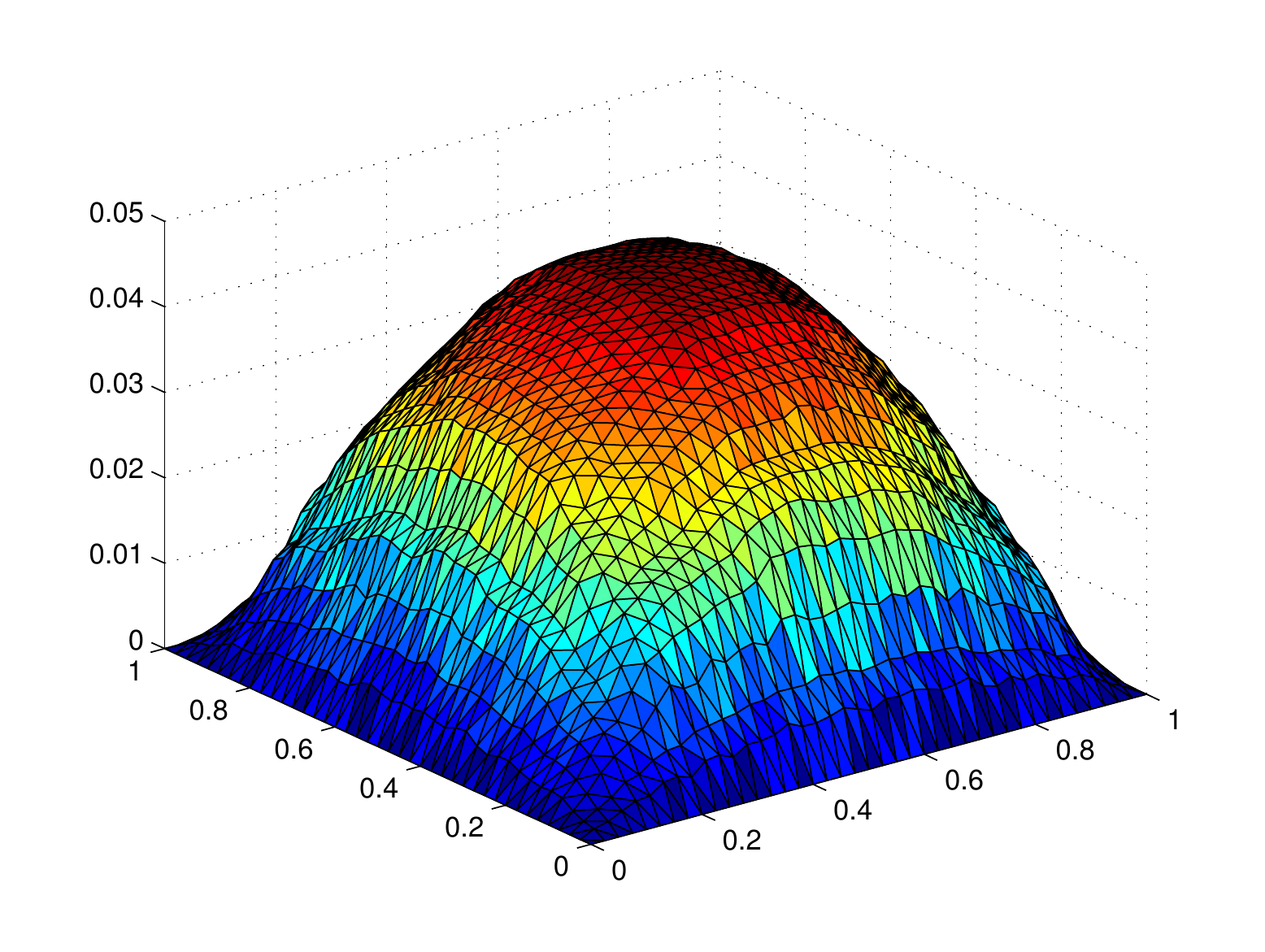}\label{fig:example2-2a}
		}
		\subfigure[$u^{H,\loc}(x,T)$ for $T=1$]{
			\includegraphics[width=0.40\textwidth]{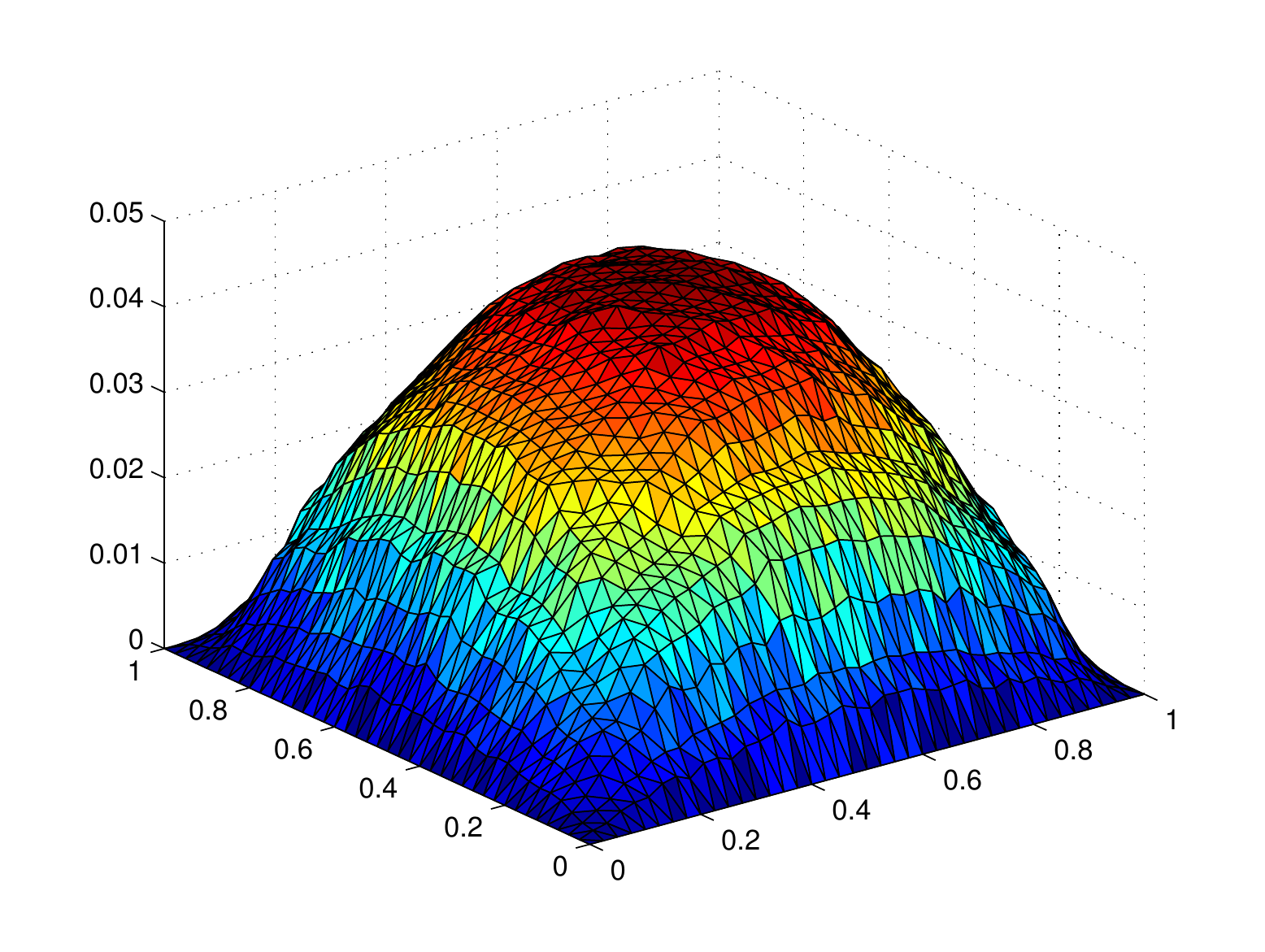}\label{fig:example2-2b}
		}
		\subfigure[$\|u-u^{H,\loc}(.,T)\|$ in $\L^2$, $\H^1$, and $\L^\infty$ in log scale as a function of $l$]{
			\includegraphics[width=0.80\textwidth]{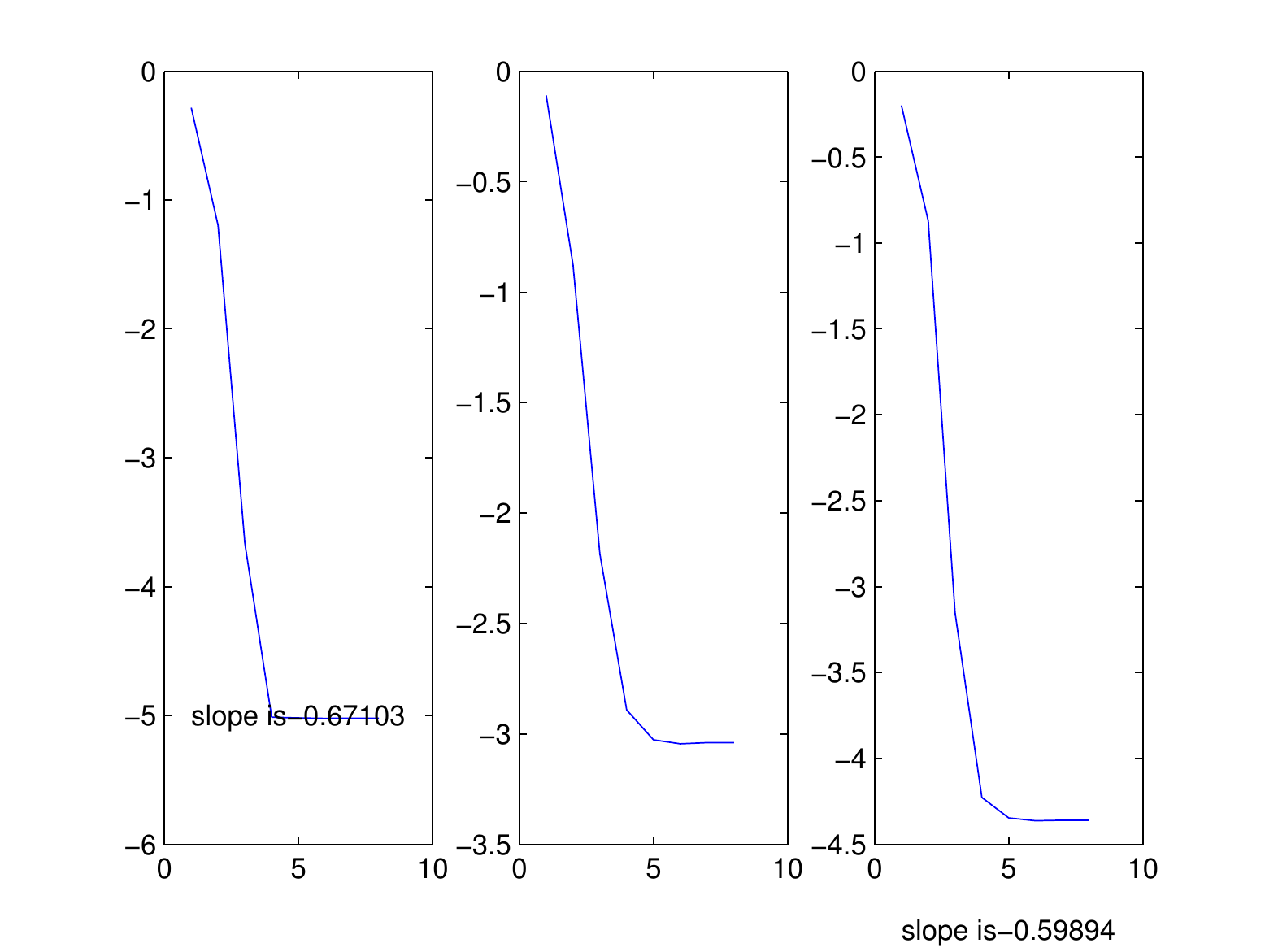}\label{fig:example2-1}
		}
		\caption{Wave equation of Subsection \ref{subsec:wave}}
		\label{fig:wave}
	\end{center}
\end{figure}

\subsection{Wave equation}\label{subsec:wave}
In this example we consider $a$ and $\Omega$ as defined in Subsection \ref{subsec:2dex} and we
 compute the solutions of wave equation \eqref{huperboliccalarproblem0} up to time $T=1$. The initial condition is
$u(x,0)=0$ and $u_t(x,0)=0$. The boundary condition is $u(x,t)=0$, for $x\in\partial\Omega$. The density $\rho$
is uniformly equal to one and we choose $g=\sin(\pi x)\sin(\pi y)$.
 Subfigures \ref{fig:example2-2a} and \ref{fig:example2-2b}  show  $u(x,T)$ and $u^{H,\loc}(x,T)$ with $T=1$. $u^{H,\loc}(x,T)$ is computed over the approximate solution space spanned by the localized basis elements $\phi_i^{\loc}$ obtained by choosing $\Omega_i$ as the union of 3 layers of triangles around $x_i$ (hence, we choose $l=3$). The resolution of the fine mesh is $h=1/80$. The resolution of the coarse mesh is $H=1/10$.
Subfigure \ref{fig:example2-1} shows approximation error $\|u-u^{H,\loc}(.,T)\|$ in $\L^2$, $\H^1$, and $\L^\infty$ norms in the log scale as a function of the number of layers $l=1,\dots,8$.

\begin{figure}[tp]
	\begin{center}
		\subfigure[$\phi_i^{\loc}$]{
			\includegraphics[width=0.40\textwidth]{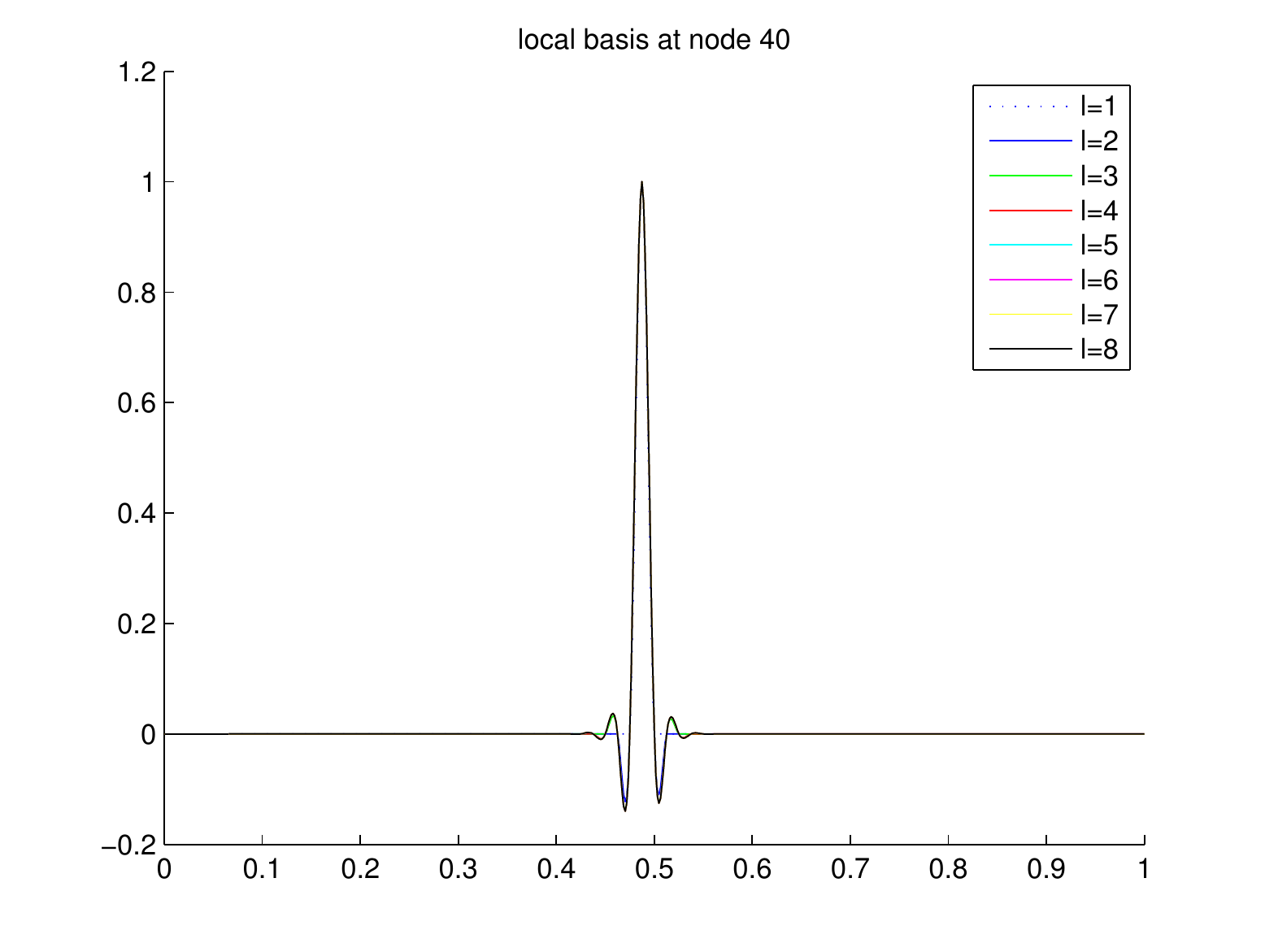}\label{fig:localbasis40}
		}
		\subfigure[$\phi_i$ in log scale]{
			\includegraphics[width=0.40\textwidth]{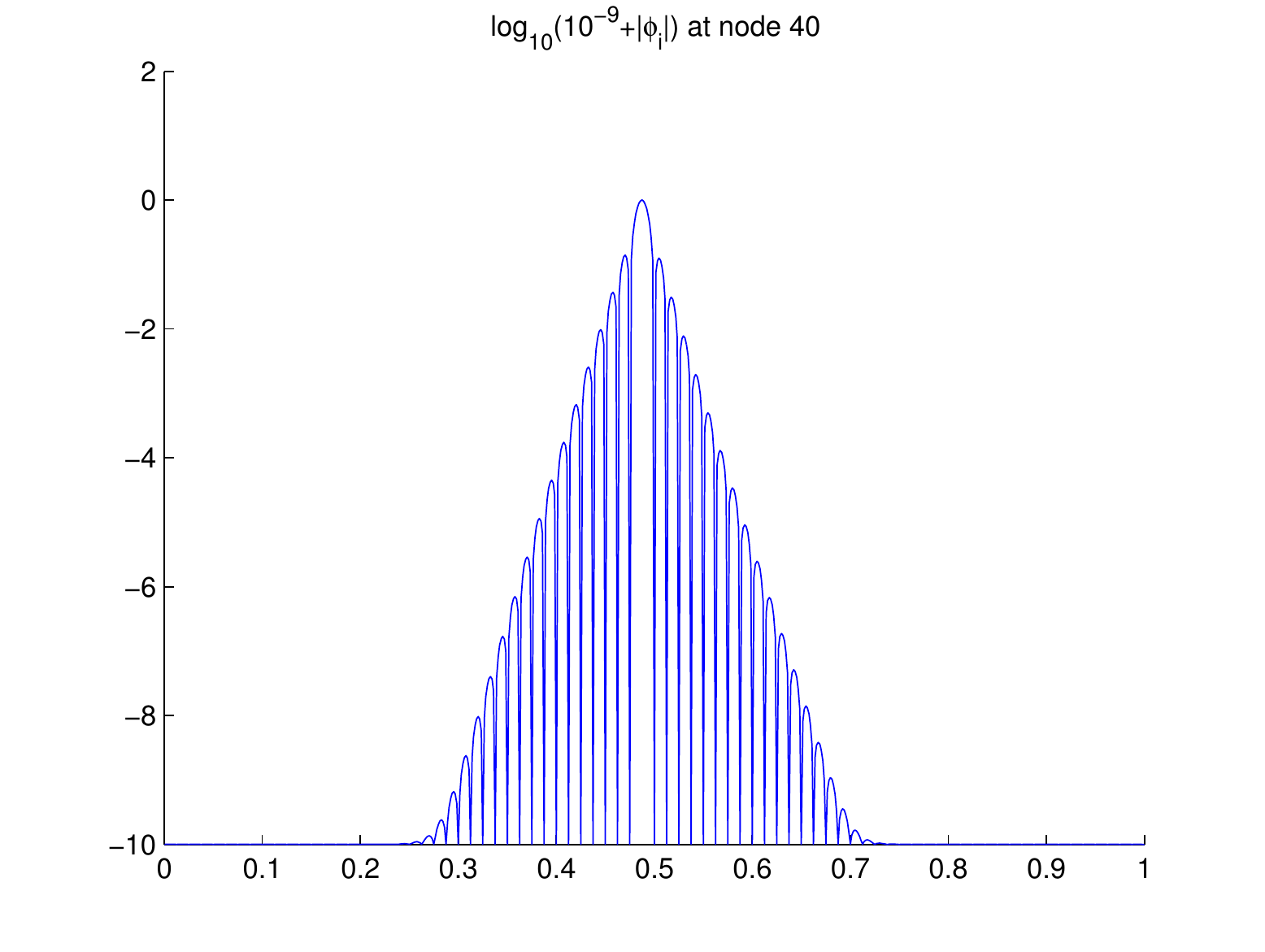}\label{fig:logglobalbasis40}
		}
		\subfigure[$\phi_i-\phi_i^{\loc}$]{
			\includegraphics[width=0.40\textwidth]{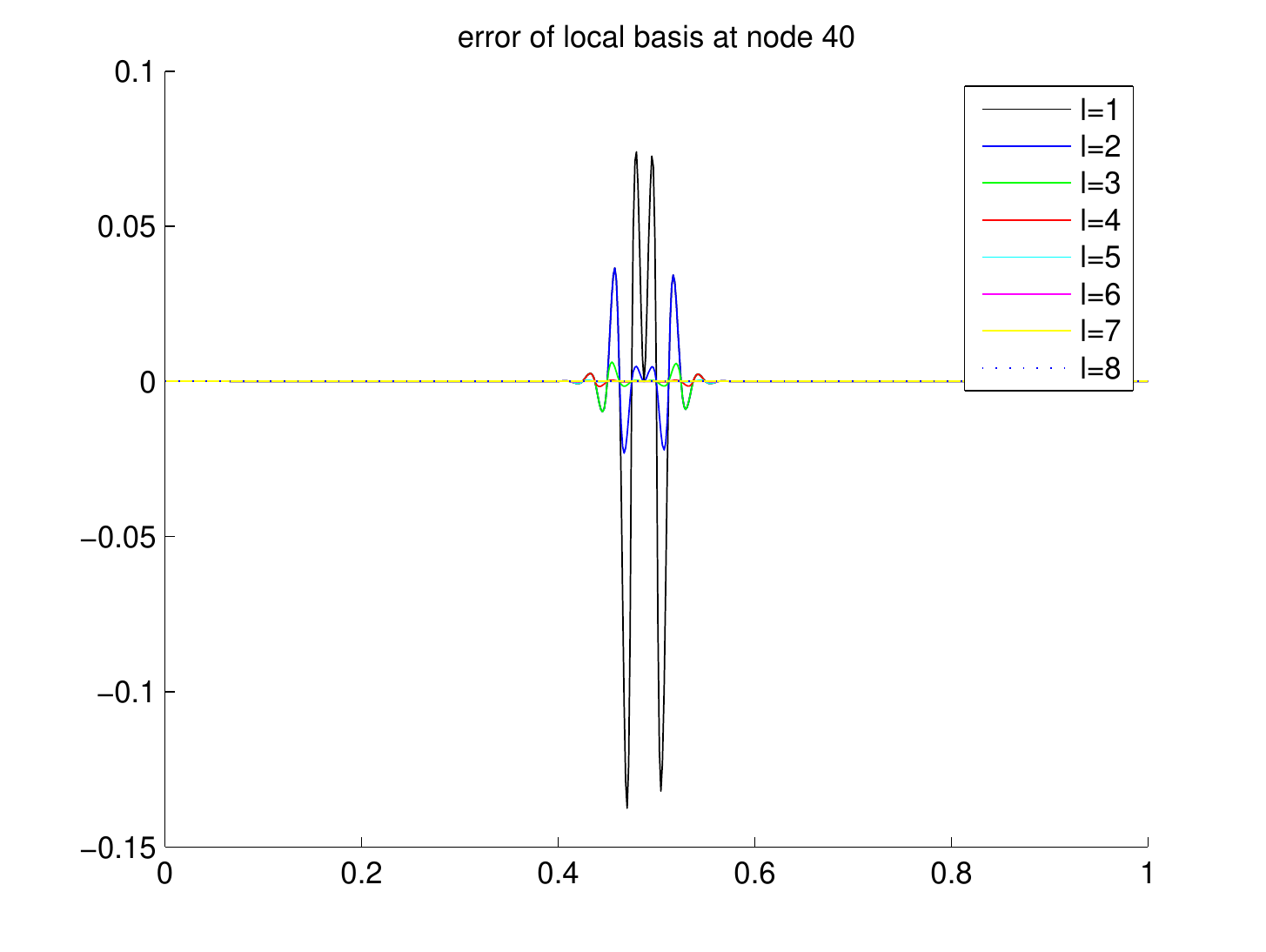}\label{fig:errlocalbasis40}
		}
		\subfigure[$-\diiv (a \nabla \phi_i)$]{
			\includegraphics[width=0.40\textwidth]{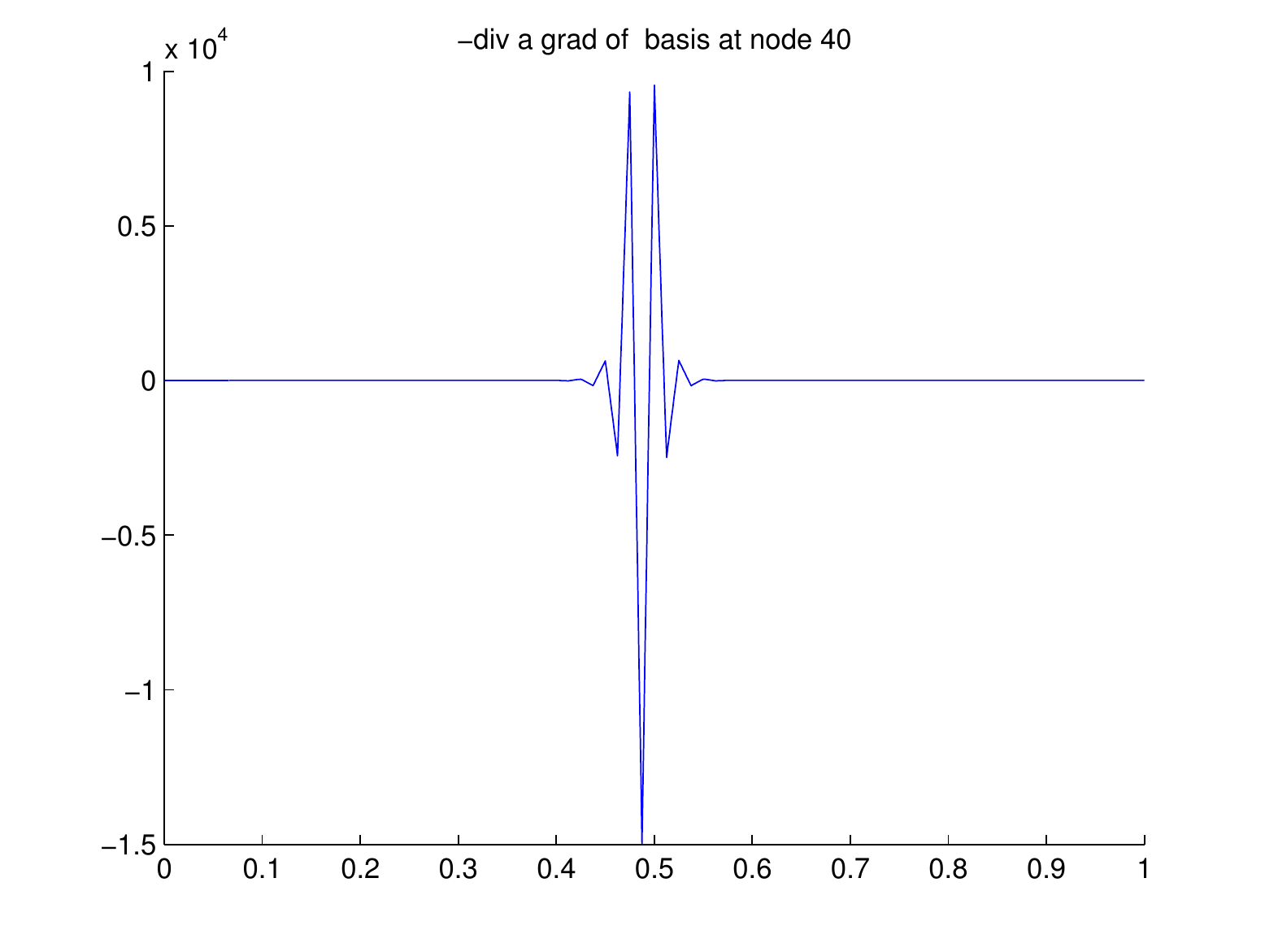}\label{fig:divagradglobalbasis40}
		}
		\subfigure[$-\diiv (a \nabla \phi_i)$ in log scale]{
			\includegraphics[width=0.40\textwidth]{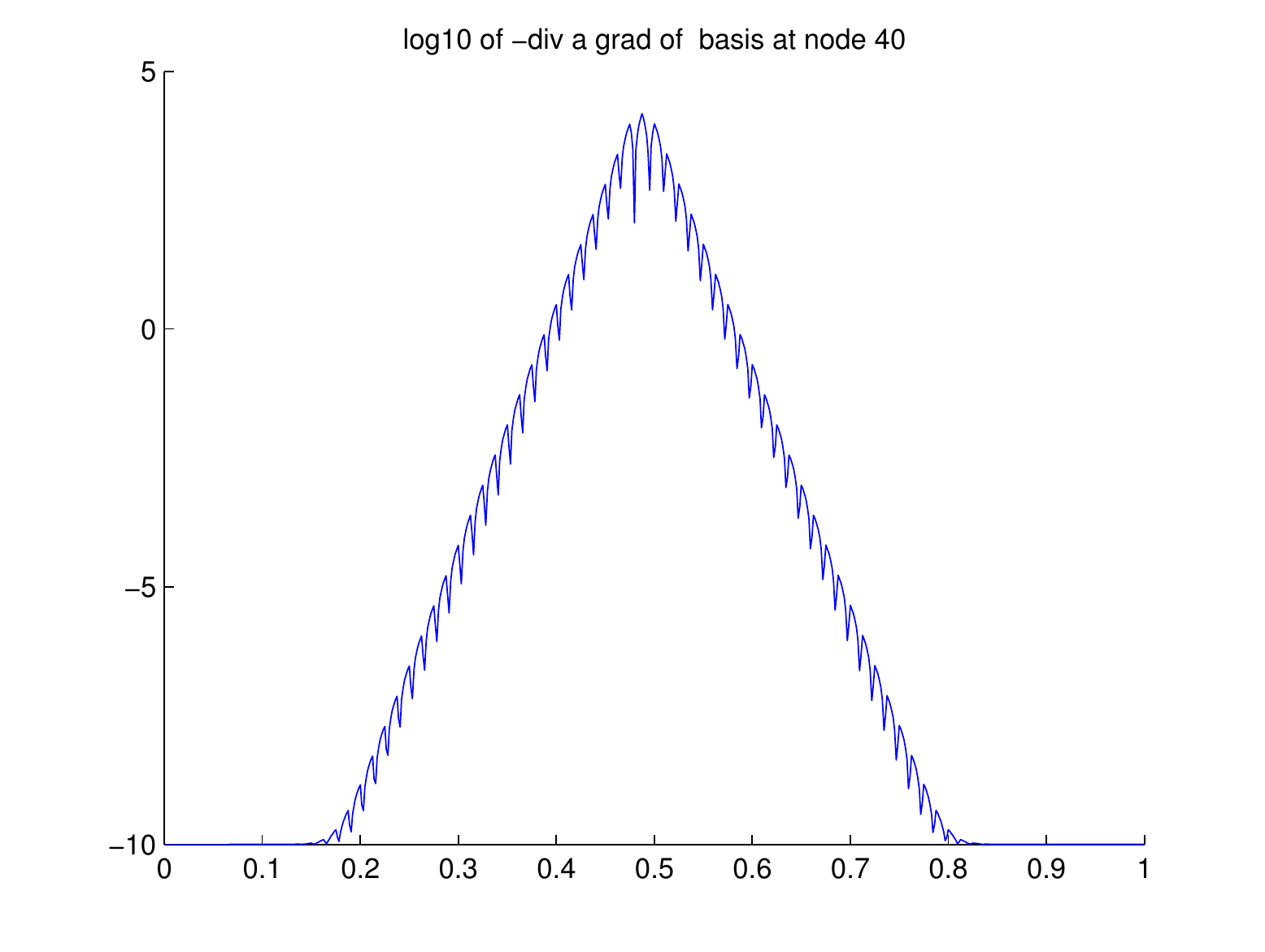}\label{fig:logdivagradglobalbasis40}
		}
		\subfigure[Matrix $\Theta$]{
			\includegraphics[width=0.40\textwidth]{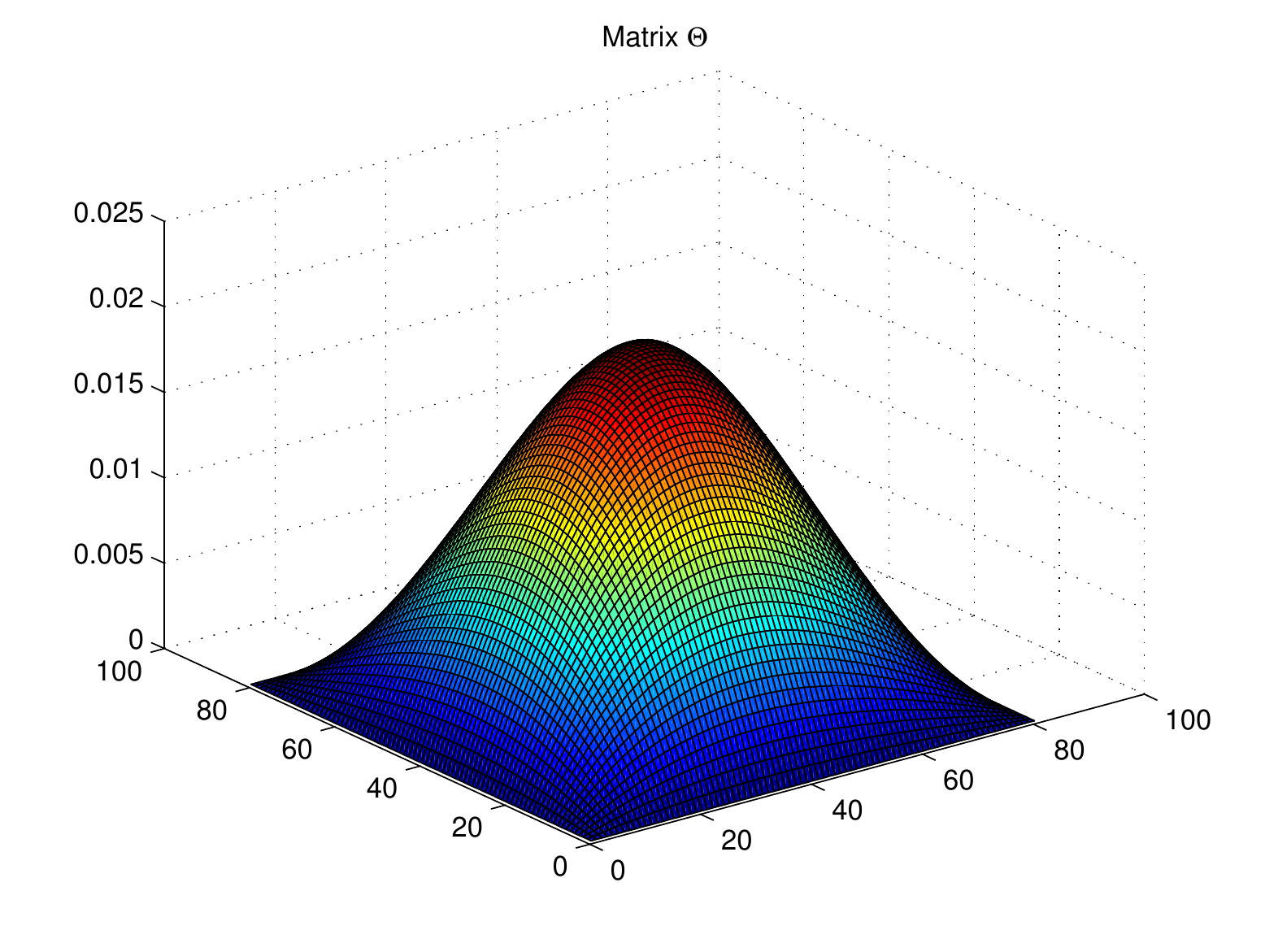}\label{fig:matrixTheta}
		}
		\subfigure[Matrix $P$]{
			\includegraphics[width=0.40\textwidth]{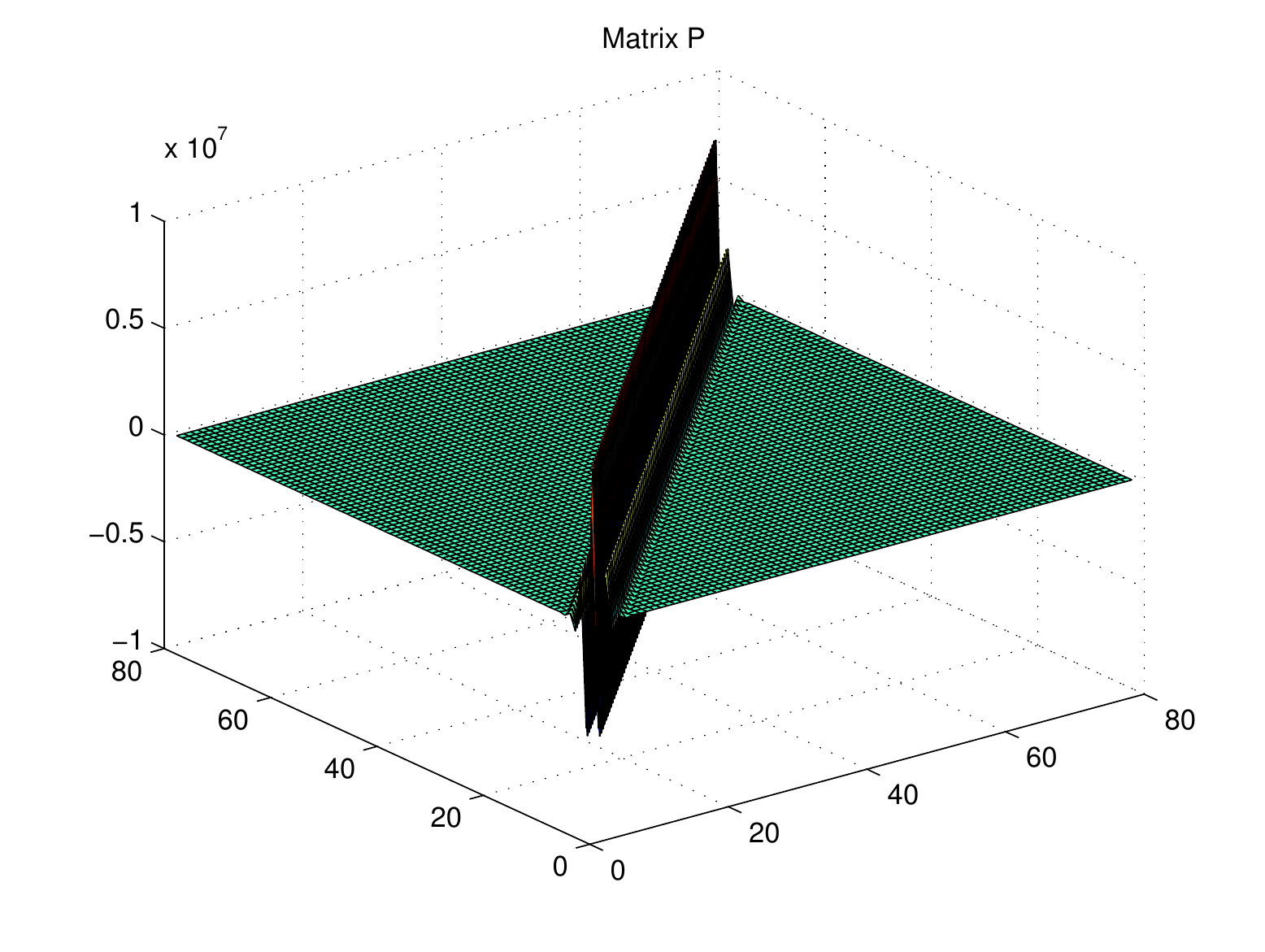}\label{fig:matrixP}
		}
		\subfigure[Matrix $P$ in log scale]{
			\includegraphics[width=0.40\textwidth]{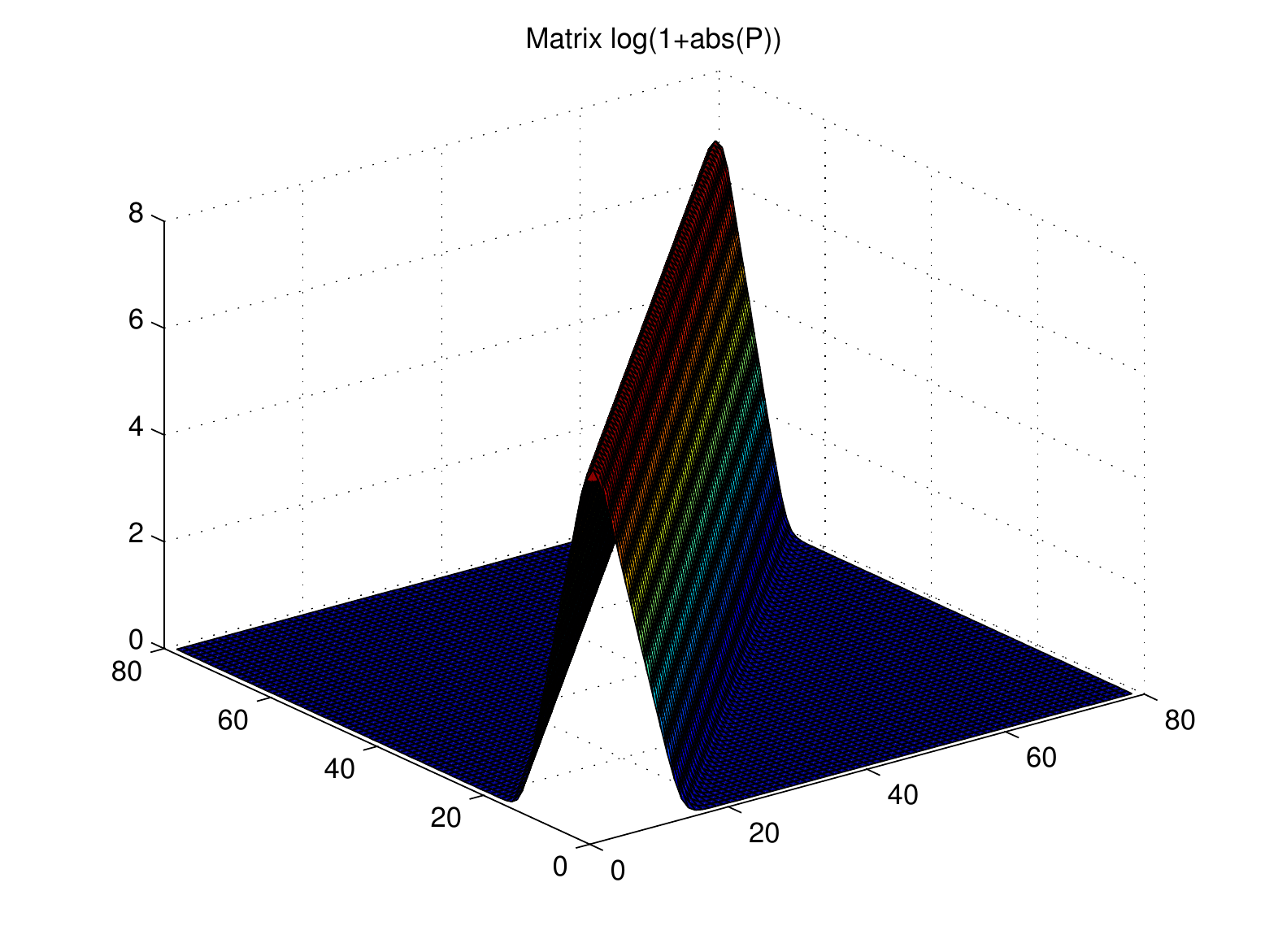}\label{fig:matrixlogP}
		}
		\caption{One dimensional example \eqref{eq:ain1d}.}
		\label{fig:1d}
	\end{center}
\end{figure}

\subsection{A one dimensional example}
In this example we consider $\Omega=(0,1)$ and
\begin{equation}\label{eq:ain1d}
 a(x) := 1 + \frac12\sin\big(\sum^K_{k=1}k^{-\alpha}(\zeta_{1k}\sin(kx)+\zeta_{2k}\cos(kx))\big)
\end{equation}
where $\{\zeta_{1k}\}$ and $\{\zeta_{2k}\}$ are two independent random vectors with independent entries uniformly distributed in
$[-\frac12,\frac12]$.
This example is taken  from \cite{HoWu:1997, MiYu06} and it has no scale separation. Note that
$a$  has an algebraically decaying (Fourier energy) spectrum, i.e.,
\begin{equation}
\<|\hat{a}(k)|^2\>\simeq |k|^{-\alpha},
\end{equation}
where $\<\cdot\>$ denotes ensemble averaging, and $\hat{a}(k)$ is the Fourier transform of the coefficients $a(x)$.
We choose $K=20$ in the numerical experiment.

The coarse nodes $(x_j)_{j\in \cN}$ correspond to $N=d_c = 80$  points uniformly distributed in $(0,1)$.
More-precisely, those points correspond to the nodes of a coarse mesh of $(0,1)$ and the distance between
 two successive points is $H=1/81$.  The fine mesh is then obtained by refining (bisecting) the coarse mesh
3 times. Local bases are computed on intervals with $l$ layers, where $l=1,\dots,8$. More precisely our localized sub-domains (over which the elements $\phi_i^{\loc}$ are computed) are $\Omega_i=(x_i-l/81,x_i+l/81)\cap\Omega$ for $l=1,\dots,8$.

We refer to Figure \ref{fig:1d} for the results of our numerical implementation.
Subfigure \ref{fig:localbasis40} shows the local basis $\phi_i^{\loc}$ for the node $i=40$ for various degrees of localization (i.e., for
$l=1,\dots,8$). Subfigure \ref{fig:logglobalbasis40} shows the global basis in the log scale  (i.e., $\log_{10}(10^{-9}+|\phi_{40}|)$) and illustrates the exponential decay of $\phi_i(x)$ away from $x_i$.
Subfigure \ref{fig:errlocalbasis40} shows $\phi_i-\phi_i^{\loc}$, the difference between the global and the local basis, at node $i=40$,
for various degrees of localization (i.e., for
$l=1,\dots,8$), and  illustrates the decay of this difference with respect to the number of layers $l$ defining $\Omega_i$.
Subfigure \ref{fig:divagradglobalbasis40} shows  $-\diiv (a \nabla\phi_i)$ for $i=40$. Subfigure \ref{fig:logdivagradglobalbasis40} shows  $-\diiv (a \nabla\phi_i)$ for $i=40$ in the log scale (i.e., $\log_{10}(10^{-9}+|\diiv a \nabla \phi_i|)$). Subfigures
 \ref{fig:divagradglobalbasis40} and \ref{fig:logdivagradglobalbasis40} illustrate the exponential decay of  $-\diiv (a \nabla \phi_i)(x)$  away from $x_i$.
Subfigure \ref{fig:matrixTheta} shows the matrix $\Theta_{i,j}$, as defined in \eqref{eq:theta:edf}, as a function of $(i,j)$ (two dimensional surface plot).
Similarly Subfigure  \ref{fig:matrixP} shows the matrix $P_{i,j}$, as defined in \eqref{eq:defP}, as a function of $(i,j)$.
Subfigure \ref{fig:matrixlogP} shows $P$ in the log scale and illustrates the exponential  decay of the entries of $P$  away from the diagonal.

\paragraph{Analogy with polyharmonic splines.}
Note that as $H\downarrow 0$ the matrix $\Theta$ shown in Subfigure \ref{fig:matrixTheta} converges towards $\tau(x,y)$ the fundamental solution of the operator $L=\big(\diiv(a\nabla \cdot)\big)^2$. Note also that since $P^{-1}=\Theta$, $P$ is,  in fact (in analogy with \cite{Rabut:1990, Rabut1:1992, Rabut2:1992}), a discrete approximation of the operator $L$ (i.e. $\sum_{j=1}^N P_{i,j} u(x_j)$ can be thought of as an approximation of $L u(x_i)$). Note the localization of
$P$ (near its diagonal) as shown in  Subfigure \ref{fig:matrixP}.
Note also since $\phi_i(x)=\sum_{j=1}^N P_{i,j} \tau(x,x_j)$, $\phi_i$ is obtained by applying the discrete version of $L$  against the fundamental solution of $L$ and in that sense the elements $\phi_i$ or $\phi_i^{\loc}$ (see Subfigure \ref{fig:localbasis40}) are approximations of masses of Diracs.

\section{On numerical homogenization}\label{sec:numhom}
\subsection{Short overview}
As mentioned in the introduction the field of numerical homogenization concerns the numerical approximation of the solution space of \eqref{eqn:scalar} with a finite-dimensional space. This problem is motivated by the fact that standard methods (such as finite-element method with piecewise linear elements  \cite{BaOs:2000}) can perform arbitrarily badly for PDEs with rough coefficients such as \eqref{eqn:scalar}.
Although some numerical homogenization methods (described below) are directly inspired from classical homogenization concepts such as periodic homogenization and scale separation \cite{BeLiPa78}, oscillating test functions, $G$ or $H$-convergence and compensated compactness \cite{Mur78, Spagnolo:1968,Gio75} and localized cell problems \cite{PapanicolaouVaradhan:1981}, one of the main objectives of numerical homogenization is to achieve a numerical approximation of the solution space of \eqref{eqn:scalar} with arbitrary rough coefficients  (i.e., in particular, without the assumptions found in classical homogenization, such as scale separation, ergodicity at fine scales and $\epsilon$-sequences of operators). Numerical homogenization methods differ in (computational)  cost and assumptions required for accuracy and by now, the field of numerical homogenization has become large enough that it is not possible to give a complete review in this short paper. For the sake of completeness, we will quote below
the non exhaustive list of numerical homogenization methods discussed in \cite{OwZh:2011}:

-  The multi-scale finite element method \cite{HoWu:1997, Wu:2002, HouWu:1999, EfendievHou:2009} can be seen as a numerical generalization of this idea of oscillating test functions found in $H$-convergence. A convergence analysis for periodic media revealed a resonance error introduced by the microscopic boundary condition \cite{HoWu:1997, HouWu:1999}. An over-sampling technique was proposed to  reduce the resonance error \cite{HoWu:1997}.

- Harmonic coordinates play an important role in various homogenization approaches, both theoretical and numerical. These coordinates  were  introduced in \cite{Kozlov:1979} in the context of random homogenization. Next, harmonic coordinates have been used in one-dimensional and quasi-one-dimensional divergence form elliptic problems  \cite{BaOs:1983, BaCaOs:1994}, allowing for efficient finite
dimensional approximations. These coordinated have been used in \cite{BenarousOwhadi:2003, Owhadi:2003, Owhadi:2004} to establish the anomalous diffusion of stochastic processes evolving in media characterized by an infinite number of scales with no scale separation and unbounded contrast and provide a proof of Davies' conjecture   \cite{Owhadi:2003} on the behavior of the heat kernel in periodic media.
The connection of these coordinates with classical homogenization is made explicit in \cite{AllBri05} in the context of multi-scale finite element methods.
The idea of using particular solutions  in numerical homogenization to approximate the solution space of \eqref{eqn:scalar} appears to  have been first proposed in reservoir modeling in the 1980s \cite{BraWu09}, \cite{WhHo87} (in which a global scale-up method was introduced based on generic flow solutions (i.e., flows calculated from generic boundary conditions)). Its rigorous mathematical analysis has been done
only recently \cite{OwZh:2007a} and is based on the fact that solutions are $\H^2$-regular with respect to harmonic coordinates  (recall that they are $\H^1$-regular with respect to Euclidean coordinates). The main message here is that if the right hand side of \eqref{eqn:scalar} is in $L^2$, then solutions can be approximated at small scales (in $\H^1$-norm) by linear combinations of $d$ (linearly independent) particular solutions ($d$ being the dimension of the space). In that sense, harmonic coordinates are (simply) good candidates for being $d$ linearly independent particular solutions.
The idea of a global change of coordinates analogous to harmonic coordinates has been implemented numerically in order to up-scale porous media flows \cite{EfHo:2007, EfGiHouEw:2006, BraWu09}. We refer, in particular, to a recent review article \cite{BraWu09}   for an overview of some main challenges in reservoir modeling and a description of global scale-up strategies based on generic flows.

- In  \cite{EEngquist:2003, EnSou08}, the structure of the
medium is numerically decomposed into a micro-scale and a
macro-scale (meso-scale) and solutions of cell problems are computed
on the micro-scale, providing local homogenized matrices that are
transferred (up-scaled) to the macro-scale grid. See also  \cite{Abdulle:2004, Engquist:2011, Abdulle:2011}. We also refer to
 \cite{AnGlo06} for convergence results on the Heterogeneous
Multiscale Method in the framework of $G$ and $\Gamma$-convergence.

- More recent work includes an adaptive projection based method
\cite{NoPaPi:2008}, which is consistent with homogenization when there is scale
separation, leading to adaptive algorithms for solving problems with
no clear scale separation; finite difference approximations of fully nonlinear,
uniformly elliptic PDEs with Lipschitz continuous viscosity
solutions \cite{Caffarelli:2008} and operator splitting methods
\cite{Arbogast:2007, Arbogast:2006}.

- We  refer to \cite{Blanc:2007, Blanc:2006} (and references therein) for recent results on homogenization of  scalar divergence-form elliptic operators with stochastic coefficients. Here, the stochastic coefficients $a(x /\ve, \omega)$ are obtained from stochastic deformations (using random diffeomorphisms) of the periodic and stationary ergodic setting.

- We refer to \cite{BalJing10} for a  detailed study of the fluctuation error (error in
approximating the limiting probability distribution of the random part of the difference
between heterogeneous and homogenized solutions) of MsFEM and HMM in the context of stochastic homogenization and a rigorous assessment of the behavior of those multiscale algorithms in non-periodic situations. In particular, \cite{BalJing10} shows that savings in computational cost based on scale separation
may come at the price of amplifying the variances of these fluctuations.

\subsection{On Localization.}
The numerical complexity of numerical homogenization is related to the size of the support of the basis elements used to approximate the solution space of
\eqref{eqn:scalar}. The possibility to compute such bases on localized sub-domains of the global domain $\Omega$ without loss of accuracy is therefore a problem of practical importance.
We refer to  \cite{ChuHou09}, \cite{EfGaWu:2011}, \cite{BaLip10}, \cite{OwZh:2011} and \cite{MaPe:2012} for recent localization results for divergence-form elliptic PDEs.
The strategy of \cite{ChuHou09} is to construct triangulations and finite element bases that are adapted to the shape of high conductivity inclusions via coefficient dependent boundary conditions for the subgrid problems (assuming $a$ to be piecewise constant and the number of inclusions to be bounded).
The strategy of \cite{EfGaWu:2011} is to solve local eigenvalue problems, observing that only a few eigenvectors are sufficient to obtain a good pre-conditioner. Both \cite{ChuHou09} and \cite{EfGaWu:2011}  require specific assumptions on the morphology and number of inclusions. The idea of the strategy is to observe that if $a$ is piecewise constant and the number of inclusions is bounded, then $u$ is locally $\H^2$ away from the interfaces of the inclusions. The inclusions can then be taken care of by adapting the mesh and the boundary values of localized problems or by observing that those inclusions will affect only a finite number of eigenvectors.

The strategy of \cite{BaLip10} is to construct Generalized Finite Elements
 by partitioning the computational domain  into to a collection of preselected subsets and compute optimal local bases (using the concept of $n$-widths \cite{Pi:1985}) for the approximation of harmonic functions. Local bases are constructed by solving local eigenvalue problems (corresponding to computing eigenvectors of $P^* P$, where $P$ is the restriction of $a$-harmonic functions from $\omega^*$ onto $\omega\subset \omega^*$, $P^*$ is the adjoint of $P$, and $\omega$ is a sub-domain of $\Omega$ surrounded by a larger sub-domain $\omega^*$).
 The method proposed in \cite{BaLip10} achieves a near exponential convergence rate (in the number of pre-computed bases functions) for harmonic functions. Non-zero right hand sides (denoted by $g$) are then taken care of by finding (for each different $g$) particular solutions on preselected subsets with a constant  Neumann boundary condition (determined according to the consistency condition).
 The near exponential rate of convergence observed in \cite{BaLip10} is explained by the fact that the source space considered in \cite{BaLip10} is more regular than $L^2$ (see \cite{BaLip10}) and requires the computation of particular (local) solutions for each right hand side $g$ and each non-zero boundary condition, the basis obtained in \cite{BaLip10} is in fact adapted to $a$-harmonic functions away from the boundary).

The strategy of \cite{OwZh:2011} is to use the transfer property of the flux-norm introduced in \cite{BeOw:2010} (and the strong compactness of the solution space \cite{OwZh:2011}) to identify the global basis and then (inspired by an idea stemming from classical homogenization \cite{Gloria:2011, GloriaOtto:2012, PapanicolaouVaradhan:1981, Yurinski:1986}) localize the computation of the basis to sub-domains of size $\mathcal{O}(\sqrt{H}\ln (1/H))$ (without loss of accuracy) by replacing  the operator $-\diiv(a\nabla \cdot)$ with the operator $\frac{1}{T}\cdot-\diiv(a\nabla \cdot)$ in the computation of this basis with a well chosen $T$ balancing the created exponential decay in the Green's function with the deterioration of the transfer property.
We also refer to \cite{Sym12} for a generalization of the transfer property to Hilbert triple settings and
a different approach to localization for hyperbolic initial-boundary value problems based on the observation that weak solutions of symmetric hyperbolic systems propagate at finite speed.

In \cite{GrGrSa2012}, it is shown that for scalar elliptic problems with $L^\infty$ coefficients, there exists a local (generalized) finite element basis (AL basis) with $O\big((\log\frac{1}{H})^{d+1}\big)$ basis functions per nodal point, achieving the $O(H)$ convergence rate.
Although the construction provided in \cite{GrGrSa2012} involves solving global problems with specific right hand sides, its theoretical result supports
 the possibility of constructing localized bases.

The strategy of \cite{MaPe:2012} is to introduce a modified Cl\'{e}ment interpolation $\mathcal{I}_H$ (in which the interpolation of $u$ is not based on its nodal values but on  volume averages around nodes), and writing $V^f$ the kernel of $\mathcal{I}_H$ (i.e., the set of functions $u$ such that $\mathcal{I}_H u=0$), identify $V_H^{ms}$ as the orthogonal complement of $V^f$ with respect to the scalar product defined by $a(u,v)=\int_{\Omega} \nabla u \,a \nabla v$. The finite element basis $\phi_i$ is then identified by projecting nodal piecewise linear elements onto $\mathcal{I}_H$.  The work \cite{MaPe:2012}  shows that the computation of  $\phi_i$ can be localized to sub-domains of size $\mathcal{O}(H \ln \frac{1}{H})$ without loss of accuracy. The main differences between the strategy of \cite{MaPe:2012} and ours are as follows: (1) The identification of the space $V^f$  in \cite{MaPe:2012} and the related projections lead to saddle point problems
 (even after localization) whereas the method proposed here leads to localized elliptic linear systems (which ensures the fact that the computational cost of our method remains minimal and plays an important role in its stability, which we observe numerically, in the high contrast regime) (2) The interpolation of solutions in \cite{MaPe:2012} is not based on nodal values but on  volume averages around nodes whereas our interpolation of solutions is based on nodal values (which allows for an accurate estimation of solutions from point measurements and the introduction of homogenized linear systems defined on nodal values) (3) The accuracy of our method does not depend on the aspect ratios of the triangles of the mesh formed by the interpolation points.

We refer to subsection \ref{subsec:compcost} for a description of the computational cost of our method.

\subsection{Computational Cost}\label{subsec:compcost}

Write $d_{lf}$  the number nodes  of the fine mesh contained in the local domain $\Omega_i$ ($d_{lf}$ is the number of nodes/degrees of freedom of $\T_h\cap\Omega_i$). Write $d_f$  the number of interior nodes of the fine mesh $\T_h$ and $d_c$  the number of interior nodes of the coarse mesh  $\T_H$. Then the cost for solving local basis $\phi_i^{\loc}$, introduced in this paper, is
$O(d_{lf}d_c)$ (because each $\phi_i^{\loc}$ is the solution of a banded/nearly diagonal linear system), and the cost for constructing the global stiffness matrix is $O(d_{lf}(d_{lf}d_c/d_f)d_c)=O(d_{lf}^2d_c^2/d_f)$.
Notice that $d_{lf}d_c/d_f$ is the number of local domains which overlap with $\Omega_i$. Therefore, the overall cost is
$O(d_{lf}d_c+d_{lf}^2d_c^2/d_f)=O(d_{lf}d_c(1+d_{lf}d_c/d_f))$, and the dominant part is $O(d_{lf}^2d_c^2/d_f)$.


For quasi-uniform triangulations $\T_h$ and $\T_H$, we have $d_f\sim h^{-d}$ and $d_c\sim H^{-d}$. For the method introduced in this paper, to achieve
$O(H)$ accuracy in $\H^1$ norm, we need $d_{lf}\sim (H\log(1/H))^{-d}$. Therefore the overall cost of (pre-)computing of all the localized basis elements $\phi_i^{\loc}$ is $C_1 \sim(\frac{\log(1/H)}{h})^d$.




\subsection{Connections with classical homogenization theory}\label{subsec:connectionsclasshom}

We will now describe connections between  classical homogenization theory \cite{Mur78, BeLiPa78, JiKoOl91, Milton:2002}, our previous work \cite{BeOw:2010,OwZh:2011} and the present paper. Recall that in classical homogenization theory \cite{Mur78} one considers situations where the conductivity $a$ depends on a (small) parameter $\epsilon$ (i.e., $a:=a(x,\epsilon)$) and one is interested in approximating the solution $u_\epsilon$ of \eqref{eqn:scalar} in the limit where $\epsilon$ converges towards zero by a function $\hat{u}_\varepsilon$ that is easier to compute or simpler to represent.

In periodic homogenization \cite{BeLiPa78, Nguetseng:1989, JiKoOl91},  where the conductivity $a(x,\epsilon)$ is assumed to be of the form $a^{\textrm{per}}(\frac{x}{\epsilon})$ (and $a^{\textrm{per}}$ is assumed to be periodic in its argument), this approximation can be achieved by observing that $u_\epsilon$ converges, as $\epsilon \downarrow 0$, towards the solution $u_0$ of $-\diiv(\hat{a}\nabla u_0)=g$ (with zero Dirichlet boundary condition) and where $\hat{a}$ is a constant (effective, homogenized) conductivity determined by solving the so called (periodic) cell problems $\diiv\big(a^{\textrm{per}} (e_i+\nabla \chi_i)\big)=0$ over one period of $a^{\textrm{per}}$ ($i\in \{1,\ldots,d\}$, the vectors $e_i$ form an orthonormal basis of $\R^d$ and $a^{\textrm{per}}$ is invariant under translation by $e_i$). $u_0$ is much simpler to compute than $u_\epsilon$ because its variations are at the $\mathcal{O}(1)$-scale (referred to as the coarse scale) in contrast
to the oscillations of $u_\epsilon$ that are at the $\epsilon$-scale (referred to as the fine scale).

However the convergence of $u_\epsilon$ towards $u_0$ is limited to the $L^2$-norm, which does not control the derivatives.
  Since in most applications (e.g., computing stresses in elasticity and currents in electromagnetism) the approximation of derivatives is required, an important  {\it corrector problem} arose: find an approximation of the form
$\hat{u}_\varepsilon(x)=u_0(x) + \epsilon u_1(x)$ that approximates  the solution $u_\epsilon(x)$  of the original problem in $\H^1$ norm.  Solving this problem was an essential step in the development of periodic homogenization. It turned out that the corrector term  $\epsilon u_1(x)$    admits an explicit representation via the solution of  the cell problems $\chi_i(\frac{x}{\epsilon})$ and  the zero order approximation $u_0(x)$ of the form $\epsilon \sum_{i=1}^d \chi_i(\frac{x}{\epsilon}) \partial_i u_0(x)$. Thus the corrector term  contains both fine and coarse scales. The concept of the cell problem played the key role in periodic homogenization since it  is pivotal for both identifying the effective conductivity and an $\H^1$-norm approximation of the solution of the original problem. Namely  it allows  to construct  both the homogenized PDE and  the corrector term. The  advantage of computing  $\hat{u}_\epsilon$ rather than directly computing $u_\epsilon$ comes from the fact that  computing of   $\chi_i(y), i=1,...,d$  and  $u_0(x)$  is less expensive  than computing $u_\epsilon$.

The question of how far these ideas and concepts can be extended  beyond periodic homogenization lead to development stochastic homogenization, i.e. the extension of the theory to coefficients of the form $a=\{a_{ij}(\frac{x}{\epsilon}, \omega)\}$, where $\omega$ is an atom of a probability space and these coefficients are assumed to be stationary and ergodic.  While there is no periodic cell in this problem, the generalized  cell problem was successfully introduced in \cite{Kozlov:1979, PapanicolaouVaradhan:1981} and the homogenized problem was derived in terms of these problems analogously to the periodic case, that is the objective of identifying the homogenized PDE was achieved. However, the construction of the corrector terms  turned out to be a much harder problem and  by now it is only partially resolved  \cite{BourgeatPiat:1999, ArmSou2011, LioSou2003, KosRezVar2006,  BalJing10}.
It is also worth mentioning that stochastic homogenization, as it currently stands, is non-local in the sense that a deterministic and compactly supported perturbation of $a=\{a_{ij}(\frac{x}{\epsilon}, \omega)\}$ does not affect (in the limit where $\epsilon \downarrow 0$) the value of the effective conductivity nor the cell problems since these are defined taking averages  (perturbations that leave the statistical properties of the
random coefficient unchanged do not affect the homogenization result).

Many practical problems such as study of  oil reservoirs, biological tissues, etc. contain multiple scales that are not necessarily well-separated.   Again, one can ask  to which extent the ideas and concepts of classical   homogenization  can be used in such problems. These questions, which one can address   in the context of the model problem \eqref{eqn:scalar}, are challenging because if coefficients $a(x)$ are just $L^\infty$ functions,   then they have ``no structure'' (no translational invariance as in the periodic and  ergodic cases).

However the present work shows that, although there is no direct analog of the parameter $\epsilon$, one can still develop a generalization of most of the concepts and objectives of classical homogenization. First observe that the linear space spanned by the basis elements $\phi_i$ (or $\phi_i^{\loc}$ with support of size $\mathcal{O}(H \ln \frac{1}{H})$) constitutes a finite dimensional approximation of the solution space of \eqref{eqn:scalar} and this finite-dimensional approximation is made possible  by the (strong) compactness of the solution space (in $\H^1$-norm) when $g\in L^2(\Omega)$. Note that the requirement that $g$ belongs to $L^2(\Omega)$ is also present in classical homogenization and the notion of compactness is also present in the (weaker) form of compactness by compensation \cite{Mur78}.
Write $S$ the stiffness matrix associated with the elements $\phi_i^{\loc}$ defined by $S_{i,j}:=\int_{\Omega}\nabla \phi_i^{\loc} a \nabla \phi_j^{\loc}$ and observe that if $\varphi_i$ are piecewise linear nodal basis elements on a (coarse) mesh with vertices $(x_i)_{i\in \cN}$ and if $c^*$ the $N$-dimensional vector solution of the linear system $\sum_{j=1}^N S_{i,j} c_j=\int_{\Omega}\phi_i^{\loc} g$ then the distance between $u$ (the solution of \eqref{eqn:scalar}) and $\bar{u}:=\sum_{i=1}^N c_i^* \varphi_i$ is at most $C H \|g\|_{L^2(\Omega)}$ in $L^2$ norm (this can be obtained in a straightforward manner by writing $u^H$ the finite element solution
of \eqref{eqn:scalar} over $(\phi_i^{\loc})_{i\in \cN}$, i.e. $u^H:=\sum_{i=1}^N c_i^* \phi_i^{\loc}$, applying the triangle inequality to $u-\bar{u}=u-u^H+u^H-\bar{u}$, and Theorem 1.1 of \cite{Narcowich:2005} to $u^H-\bar{u}$). In that sense, $S$ can be viewed as a homogenized linear system, generalizing the concept of an homogenized PDE, and its entries can be seen as localized effective (edge) conductivities \cite{DesDonOw:2012} generalizing the concept of effective conductivity.
Furthermore noting that, although $\bar{u}$ does not constitute a good approximation of the derivatives of $u$,
the distance between $u$ and $u^H$ is at most $C H \|g\|_{L^2(\Omega)}$ in $\H^1$ norm, we deduce that the elements $\phi_i^{\loc}-\varphi_i$ can be interpreted as generalized correctors. That is we define the generalize corrector as an  $\H^1$ approximation of the exact solution which is constructed out of standard solutions that solve  {\it generalized cell problems} and therefore does not depend on the RHS of the original equation.  Several ways of introducing such problems were  proposed in  \cite{BeOw:2010,OwZh:2011} and the present paper.
Of course in the present work the notion of corrector is superfluous since Rough Polyharmonic Splines directly lead to an $\H^1$-accurate approximation of the solution space. It should also be noted that the elements $(\phi_i^{\loc})_{i\in \cN}$ provide an explicit representation  allowing for the identification of the (local) effect of a local perturbation of $a$ on the solution and on effective conductivities.

\paragraph{Acknowledgements.}
 The work of H. Owhadi is partially supported by the National Science Foundation under Award Number  CMMI-092600, the Department of Energy National Nuclear Security Administration under Award Number DE-FC52-08NA28613, the Air Force Office of Scientific Research under
 Award Number  FA9550-12-1-0389 and a contract from the DOE Exascale Co-Design Center for Materials in Extreme Environments.
 The work of L. Zhang is supported by the Young
Thousand Talents Program of China.  The work of L. Berlyand is supported  by DOE under Award Number DE-FG02-08ER25862.  H. Owhadi thanks M. Desbrun  and F. de Goes for stimulating discussions. H. Owhadi also thanks Fran\c{c}ois Murat for helpful discussions on Lemma \ref{lem:murat}.
L. Berlyand  thanks M. Potomkin for useful comments and suggestions. We also thank an anonymous referee for carefully reading the manuscript and detailed comments and suggestions.

\bibliographystyle{plain}
\bibliography{RPS}

\def\cprime{$'$}
\begin{thebibliography}{10}

\bibitem{Abdulle:2011}
Assyr Abdulle and Marcus~J. Grote.
\newblock Finite element heterogeneous multiscale method for the wave equation.
\newblock {\em Multiscale Model. Simul.}, 9(2):766--792, 2011.

\bibitem{Abdulle:2004}
Assyr Abdulle and Christoph Schwab.
\newblock Heterogeneous multiscale {FEM} for diffusion problems on rough
  surfaces.
\newblock {\em Multiscale Model. Simul.}, 3(1):195--220 (electronic), 2004/05.

\bibitem{AllBri05}
G.~Allaire and R.~Brizzi.
\newblock A multiscale finite element method for numerical homogenization.
\newblock {\em Multiscale Model. Simul.}, 4(3):790--812 (electronic), 2005.

\bibitem{Arbogast:2006}
T.~Arbogast and K.~J. Boyd.
\newblock Subgrid upscaling and mixed multiscale finite elements.
\newblock {\em SIAM J. Numer. Anal.}, 44(3):1150--1171 (electronic), 2006.

\bibitem{Arbogast:2007}
T.~Arbogast, C.-S. Huang, and S.-M. Yang.
\newblock Improved accuracy for alternating-direction methods for parabolic
  equations based on regular and mixed finite elements.
\newblock {\em Math. Models Methods Appl. Sci.}, 17(8):1279--1305, 2007.

\bibitem{ArmSou2011}
Scott~N. Armstrong and Panagiotis~E. Souganidis.
\newblock Stochastic homogenization of {H}amilton-{J}acobi and degenerate
  {B}ellman equations in unbounded environments.
\newblock {\em J. Math. Pures Appl. (9)}, 97(5):460--504, 2012.

\bibitem{Atteia:1970}
Marc Atteia.
\newblock Fonctions ``spline'' et noyaux reproduisants d'{A}ronszajn-{B}ergman.
\newblock {\em Rev. Fran\c caise Informat. Recherche Op\'erationnelle},
  4(R-3):31--43, 1970.

\bibitem{BaCaOs:1994}
I.~Babu{\v{s}}ka, G.~Caloz, and J.~E. Osborn.
\newblock Special finite element methods for a class of second order elliptic
  problems with rough coefficients.
\newblock {\em SIAM J. Numer. Anal.}, 31(4):945--981, 1994.

\bibitem{BaLip10}
I.~Babu{\v{s}}ka and R.~Lipton.
\newblock Optimal local approximation spaces for generalized finite element
  methods with application to multiscale problems.
\newblock {\em Multiscale Model. Simul.}, 9:373--406, 2011.

\bibitem{BaOs:1983}
I.~Babu{\v{s}}ka and J.~E. Osborn.
\newblock Generalized finite element methods: their performance and their
  relation to mixed methods.
\newblock {\em SIAM J. Numer. Anal.}, 20(3):510--536, 1983.

\bibitem{BaOs:2000}
I.~Babu{\v{s}}ka and J.~E. Osborn.
\newblock Can a finite element method perform arbitrarily badly?
\newblock {\em Math. Comp.}, 69(230):443--462, 2000.

\bibitem{BalJing10}
Guillaume Bal and Wenjia Jing.
\newblock Corrector theory for {MSFEM} and {HMM} in random media.
\newblock {\em Multiscale Model. Simul.}, 9(4):1549--1587, 2011.

\bibitem{BenarousOwhadi:2003}
G{\'e}rard Ben~Arous and Houman Owhadi.
\newblock Multiscale homogenization with bounded ratios and anomalous slow
  diffusion.
\newblock {\em Comm. Pure Appl. Math.}, 56(1):80--113, 2003.

\bibitem{BeLiPa78}
A.~Bensoussan, J.~L. Lions, and G.~Papanicolaou.
\newblock {\em Asymptotic analysis for periodic structure}.
\newblock North Holland, Amsterdam, 1978.

\bibitem{BeOw:2010}
L.~Berlyand and H.~Owhadi.
\newblock Flux norm approach to finite dimensional homogenization
  approximations with non-separated scales and high contrast.
\newblock {\em Archives for Rational Mechanics and Analysis}, 198(2):677--721,
  2010.

\bibitem{Blanc:2006}
X.~Blanc, C.~Le Bris, and P.-L. Lions.
\newblock Une variante de la th\'eorie de l'homog\'en\'eisation stochastique
  des op\'erateurs elliptiques.
\newblock {\em C. R. Math. Acad. Sci. Paris}, 343(11-12):717--724, 2006.

\bibitem{Blanc:2007}
X.~Blanc, C.~Le Bris, and P.-L. Lions.
\newblock Stochastic homogenization and random lattices.
\newblock {\em J. Math. Pures Appl. (9)}, 88(1):34--63, 2007.

\bibitem{BourgeatPiat:1999}
A.~Bourgeat and A.~Piatnitski.
\newblock Estimates in probability of the residual between the random and the
  homogenized solutions of one-dimensional second-order operator.
\newblock {\em Asymptot. Anal.}, 21(3-4):303--315, 1999.

\bibitem{BraWu09}
L.~V. Branets, S.~S. Ghai, L.~L., and X.-H. Wu.
\newblock Challenges and technologies in reservoir modeling.
\newblock {\em Commun. Comput. Phys.}, 6(1):1--23, 2009.

\bibitem{Brownlee:2004}
R.~A. Brownlee.
\newblock {\em Error estimates for interpolation of rough and smooth functions
  using radial basis functions}.
\newblock PhD thesis, University of Leicester, 2004.

\bibitem{Caffarelli:2008}
L.~A. Caffarelli and P.~E. Souganidis.
\newblock A rate of convergence for monotone finite difference approximations
  to fully nonlinear, uniformly elliptic {PDE}s.
\newblock {\em Comm. Pure Appl. Math.}, 61(1):1--17, 2008.

\bibitem{ChuHou09}
C.-C. Chu, I.~G. Graham, and T.~Y. Hou.
\newblock A new multiscale finite element method for high-contrast elliptic
  interface problems.
\newblock {\em Math. Comp.}, 79:1915--1955, 2010.

\bibitem{DesDonOw:2012}
M.~Desbrun, R.~Donaldson, and H.~Owhadi.
\newblock Modeling across scales: Discrete geometric structures in
  homogenization and inverse homogenization.
\newblock {\em Reviews of Nonlinear Dynamics and Complexity. Special issue on
  Multiscale Analysis and Nonlinear Dynamics.}, 2012.

\bibitem{Duchon:1976}
Jean Duchon.
\newblock Interpolation des fonctions de deux variables suivant le principe de
  la flexion des plaques minces.
\newblock {\em Rev. Francaise Automat. Informat. Recherche Operationnelle Ser.
  RAIRO Analyse Numerique}, 10(R-3):5--12, 1976.

\bibitem{Duchon:1977}
Jean Duchon.
\newblock Splines minimizing rotation-invariant semi-norms in {S}obolev spaces.
\newblock In {\em Constructive theory of functions of several variables
  ({P}roc. {C}onf., {M}ath. {R}es. {I}nst., {O}berwolfach, 1976)}, pages
  85--100. Lecture Notes in Math., Vol. 571. Springer, Berlin, 1977.

\bibitem{Duchon:1978}
Jean Duchon.
\newblock Sur l'erreur d'interpolation des fonctions de plusieurs variables par
  les {$D^{m}$}-splines.
\newblock {\em RAIRO Anal. Num\'er.}, 12(4):325--334, vi, 1978.

\bibitem{EEngquist:2003}
Weinan E and Bjorn Engquist.
\newblock The heterogeneous multiscale methods.
\newblock {\em Commun. Math. Sci.}, 1(1):87--132, 2003.

\bibitem{EfGaWu:2011}
Y.~Efendiev, J.~Galvis, and X.~Wu.
\newblock Multiscale finite element and domain decomposition methods for
  high-contrast problems using local spectral basis functions.
\newblock {\em Journal of Computational Physics}, 230(4):937--955, 2011.

\bibitem{EfGiHouEw:2006}
Y.~Efendiev, V.~Ginting, T.~Hou, and R.~Ewing.
\newblock Accurate multiscale finite element methods for two-phase flow
  simulations.
\newblock {\em J. Comput. Phys.}, 220(1):155--174, 2006.

\bibitem{EfHo:2007}
Y.~Efendiev and T.~Hou.
\newblock Multiscale finite element methods for porous media flows and their
  applications.
\newblock {\em Appl. Numer. Math.}, 57(5-7):577--596, 2007.

\bibitem{EfendievHou:2009}
Yalchin Efendiev and Thomas~Y. Hou.
\newblock {\em Multiscale finite element methods}, volume~4 of {\em Surveys and
  Tutorials in the Applied Mathematical Sciences}.
\newblock Springer, New York, 2009.
\newblock Theory and applications.

\bibitem{EkTe:1987}
I.~Ekeland and R.~Temam.
\newblock {\em Convex Analysis and Variational Problems}, volume~28 of {\em
  Classics in Applied Mathematics}.
\newblock Society for Industrial and Applied Mathematics, 1987.

\bibitem{EnSou08}
B.~Engquist and P.~E. Souganidis.
\newblock Asymptotic and numerical homogenization.
\newblock {\em Acta Numerica}, 17:147--190, 2008.

\bibitem{Engquist:2011}
Bj{\"o}rn Engquist, Henrik Holst, and Olof Runborg.
\newblock Multi-scale methods for wave propagation in heterogeneous media.
\newblock {\em Commun. Math. Sci.}, 9(1):33--56, 2011.

\bibitem{Giaquinta:1983}
Mariano Giaquinta.
\newblock {\em Multiple integrals in the calculus of variations and nonlinear
  elliptic systems}, volume 105 of {\em Annals of Mathematics Studies}.
\newblock Princeton University Press, Princeton, NJ, 1983.

\bibitem{GiTr:1983}
D.~Gilbarg and N.S. Trudinger.
\newblock {\em Elliptic Partial Differential Equations of Second Order}.
\newblock Springer-Verlag, 1983.
\newblock 2nd ed.

\bibitem{Gio75}
E.~De Giorgi.
\newblock Sulla convergenza di alcune successioni di integrali del tipo
  dell'aera.
\newblock {\em Rendi Conti di Mat.}, 8:277--294, 1975.

\bibitem{AnGlo06}
A.~Gloria.
\newblock Analytical framework for the numerical homogenization of elliptic
  monotone operators and quasiconvex energies.
\newblock {\em SIAM MMS}, 5(3):996--1043, 2006.

\bibitem{Gloria:2011}
Antoine Gloria.
\newblock Reduction of the resonance error---{P}art 1: {A}pproximation of
  homogenized coefficients.
\newblock {\em Math. Models Methods Appl. Sci.}, 21(8):1601--1630, 2011.

\bibitem{GloriaOtto:2012}
Antoine Gloria and Felix Otto.
\newblock An optimal error estimate in stochastic homogenization of discrete
  elliptic equations.
\newblock {\em Ann. Appl. Probab.}, 22(1):1--28, 2012.

\bibitem{GrGrSa2012}
L.~Grasedyck, I.~Greff, and S.~Sauter.
\newblock The al basis for the solution of elliptic problems in heterogeneous
  media.
\newblock {\em Multiscale Modeling \& Simulation}, 10(1):245--258, 2012.

\bibitem{GrWi:1982}
M.~Gr\"{u}ter and K.~Widman.
\newblock The green function for uniformly elliptic equations.
\newblock {\em Manuscripta Math.}, 37:303--342, 1982.

\bibitem{Harder:1972}
R.~L. Harder and R.~N. Desmarais.
\newblock Interpolation using surface splines.
\newblock {\em J. Aircraft}, 9:189--191, 1972.

\bibitem{HouWu:1999}
T.~Y. Hou, X.-H. Wu, and Z.~Cai.
\newblock Convergence of a multiscale finite element method for elliptic
  problems with rapidly oscillating coefficients.
\newblock {\em Math. Comp.}, 68(227):913--943, 1999.

\bibitem{HoWu:1997}
T.Y. Hou and X.H. Wu.
\newblock A multiscale finite element method for elliptic problems in composite
  materials and porous media.
\newblock {\em J. Comput. Phys.}, 134(1):169--189, 1997.

\bibitem{JiKoOl91}
V.~V. Jikov, S.~M. Kozlov, and O.~A. Oleinik.
\newblock {\em Homogenization of Differential Operators and Integral
  Functionals}.
\newblock Springer-Verlag, 1991.

\bibitem{KosRezVar2006}
Elena Kosygina, Fraydoun Rezakhanlou, and S.~R.~S. Varadhan.
\newblock Stochastic homogenization of {H}amilton-{J}acobi-{B}ellman equations.
\newblock {\em Comm. Pure Appl. Math.}, 59(10):1489--1521, 2006.

\bibitem{Kounchev:2005}
O.~Kounchev and H.~Render.
\newblock Polyharmonic splines on grids {$\Bbb Z\times a\Bbb Z^n$} and their
  limits.
\newblock {\em Math. Comp.}, 74(252):1831--1841 (electronic), 2005.

\bibitem{Kozlov:1979}
S.~M. Kozlov.
\newblock The averaging of random operators.
\newblock {\em Mat. Sb. (N.S.)}, 109(151)(2):188--202, 327, 1979.

\bibitem{LioSou2003}
Pierre-Louis Lions and Panagiotis~E. Souganidis.
\newblock Correctors for the homogenization of {H}amilton-{J}acobi equations in
  the stationary ergodic setting.
\newblock {\em Comm. Pure Appl. Math.}, 56(10):1501--1524, 2003.

\bibitem{Madych:1988}
W.~R. Madych and S.~A. Nelson.
\newblock Multivariate interpolation and conditionally positive definite
  functions.
\newblock {\em Approx. Theory Appl.}, 4(4):77--89, 1988.

\bibitem{Madych1:1990}
W.~R. Madych and S.~A. Nelson.
\newblock Multivariate interpolation and conditionally positive definite
  functions. {II}.
\newblock {\em Math. Comp.}, 54(189):211--230, 1990.

\bibitem{Madych2:1990}
W.~R. Madych and S.~A. Nelson.
\newblock Polyharmonic cardinal splines.
\newblock {\em J. Approx. Theory}, 60(2):141--156, 1990.

\bibitem{Madych3:1990}
W.~R. Madych and S.~A. Nelson.
\newblock Polyharmonic cardinal splines: a minimization property.
\newblock {\em J. Approx. Theory}, 63(3):303--320, 1990.

\bibitem{MaPe:2012}
A.~Malqvist and D.~Peterseim.
\newblock Localization of elliptic multiscale problems.
\newblock Technical report, arXiv:1110.0692, 2012.

\bibitem{Matveev:1992a}
O.~V. Matveev.
\newblock Some methods for the reconstruction of functions of {$n$} variables
  defined on chaotic grids.
\newblock {\em Dokl. Akad. Nauk}, 326(4):605--609, 1992.

\bibitem{Matveev:1992b}
O.~V. Matveev.
\newblock Spline interpolation of functions of several variables and bases in
  {S}obolev spaces.
\newblock {\em Trudy Mat. Inst. Steklov.}, 198:125--152, 1992.

\bibitem{Matveev:1994}
O.~V. Matveev.
\newblock Interpolation of functions on chaotic grids.
\newblock {\em Dokl. Akad. Nauk}, 339(5):594--597, 1994.

\bibitem{Matveev:1996}
O.~V. Matveev.
\newblock Methods for the approximate recovery of functions defined on chaotic
  grids.
\newblock {\em Izv. Ross. Akad. Nauk Ser. Mat.}, 60(5):111--156, 1996.

\bibitem{Matveev:1997}
O.~V. Matveev.
\newblock On a method for the interpolation of functions on chaotic grids.
\newblock {\em Mat. Zametki}, 62(3):404--417, 1997.

\bibitem{Melenk:2000}
J.~M. Melenk.
\newblock On {$n$}-widths for elliptic problems.
\newblock {\em J. Math. Anal. Appl.}, 247(1):272--289, 2000.

\bibitem{Milton:2002}
Graeme~W. Milton.
\newblock {\em The theory of composites}, volume~6 of {\em Cambridge Monographs
  on Applied and Computational Mathematics}.
\newblock Cambridge University Press, Cambridge, 2002.

\bibitem{MiYu06}
P.~Ming and X.~Yue.
\newblock Numerical methods for multiscale elliptic problems.
\newblock {\em J. of Comput. Phys.}, 214:421--445, 2006.

\bibitem{Moser:2012}
R.~Moser.
\newblock Theory of partial differential equations.
\newblock {\em MA6000A: Lecture Notes}, 2012.
\newblock \url{http://people.bath.ac.uk/rm257/MA6000A/notes.pdf}.

\bibitem{Mur78}
F.~Murat and L.~Tartar.
\newblock H-convergence.
\newblock {\em S\'{e}minaire d'Analyse Fonctionnelle et Num\'{e}rique de
  l'Universit\'{e} d'Alger}, 1978.

\bibitem{Narcowich:2005}
Francis~J. Narcowich, Joseph~D. Ward, and Holger Wendland.
\newblock Sobolev bounds on functions with scattered zeros, with applications
  to radial basis function surface fitting.
\newblock {\em Math. Comp.}, 74(250):743--763 (electronic), 2005.

\bibitem{Nguetseng:1989}
Gabriel Nguetseng.
\newblock A general convergence result for a functional related to the theory
  of homogenization.
\newblock {\em SIAM J. Math. Anal.}, 20(3):608--623, 1989.

\bibitem{NoPaPi:2008}
J.~Nolen, G.~Papanicolaou, and O.~Pironneau.
\newblock A framework for adaptive multiscale methods for elliptic problems.
\newblock {\em Multiscale Model. Simul.}, 7(1):171--196, 2008.

\bibitem{OwZh:2007a}
H.~Owhadi and L.~Zhang.
\newblock Metric-based upscaling.
\newblock {\em Comm. Pure Appl. Math.}, 60(5):675--723, 2007.

\bibitem{OwZh:2011}
H.~Owhadi and L.~Zhang.
\newblock Localized bases for finite dimensional homogenization approximations
  with non-separated scales and high-contrast.
\newblock {\em SIAM Multiscale Modeling \& Simulation}, 9:1373--1398, 2011.
\newblock arXiv:1011.0986.

\bibitem{Owhadi:2003}
Houman Owhadi.
\newblock Anomalous slow diffusion from perpetual homogenization.
\newblock {\em Ann. Probab.}, 31(4):1935--1969, 2003.

\bibitem{Owhadi:2004}
Houman Owhadi.
\newblock Averaging versus chaos in turbulent transport?
\newblock {\em Comm. Math. Phys.}, 247(3):553--599, 2004.

\bibitem{PapanicolaouVaradhan:1981}
G.~C. Papanicolaou and S.~R.~S. Varadhan.
\newblock Boundary value problems with rapidly oscillating random coefficients.
\newblock In {\em Random fields, {V}ol. {I}, {II} ({E}sztergom, 1979)},
  volume~27 of {\em Colloq. Math. Soc. J\'anos Bolyai}, pages 835--873.
  North-Holland, Amsterdam, 1981.

\bibitem{Pi:1985}
A.~Pinkus.
\newblock {\em N-Widths in Approximation Theory}.
\newblock Springer-Verlag, 1985.

\bibitem{Rabut:1990}
C.~Rabut.
\newblock {\em B-splines Polyarmoniques Cardinales: Interpolation,
  Quasi-interpolation, filtrage}.
\newblock These d’Etat, Universite de Toulouse, 1990.

\bibitem{Rabut1:1992}
Christophe Rabut.
\newblock Elementary {$m$}-harmonic cardinal {$B$}-splines.
\newblock {\em Numer. Algorithms}, 2(1):39--61, 1992.

\bibitem{Rabut2:1992}
Christophe Rabut.
\newblock High level {$m$}-harmonic cardinal {$B$}-splines.
\newblock {\em Numer. Algorithms}, 2(1):63--84, 1992.

\bibitem{Rossini:2009}
Milvia Rossini.
\newblock Detecting discontinuities in two-dimensional signals sampled on a
  grid.
\newblock {\em JNAIAM J. Numer. Anal. Ind. Appl. Math.}, 4(3-4):203--215, 2009.

\bibitem{Schoenberg:1973}
I.~J. Schoenberg.
\newblock {\em Cardinal spline interpolation}.
\newblock Society for Industrial and Applied Mathematics, Philadelphia, Pa.,
  1973.
\newblock Conference Board of the Mathematical Sciences Regional Conference
  Series in Applied Mathematics, No. 12.

\bibitem{Spagnolo:1968}
S.~Spagnolo.
\newblock Sulla convergenza di soluzioni di equazioni paraboliche ed
  ellittiche.
\newblock {\em Ann. Scuola Norm. Sup. Pisa (3) 22 (1968), 571-597; errata,
  ibid. (3)}, 22:673, 1968.

\bibitem{St:1965}
G.~Stampacchia.
\newblock Le probl\`{e}me de dirichlet pour les \'{e}quations elliptiques du
  second ordre \`{a} coefficients discontinus.
\newblock {\em Ann. Inst. Fourier (Grenoble)}, 15(1):189--258, 1965.

\bibitem{Stampaccia:1964}
Guido Stampacchia.
\newblock {\em \`{E}quations elliptiques du second ordre \`a coefficients
  discontinus}.
\newblock S\'eminaire Jean Leray no 3 (1963-1964). Numdam, 1964.

\bibitem{Sym12}
William Symes.
\newblock Transfer of approximation and numerical homogenization of hyperbolic
  boundary value problems with a continuum of scales.
\newblock {\em TR12-20 Rice Tech Report}, 2012.

\bibitem{Taylor:2012}
J.L. Taylor, S.~Kim, and R.M. Brown.
\newblock The green function for elliptic systems in two dimensions.
\newblock {\em arXiv:1205.1089}, 2012.

\bibitem{Vy:2008}
J.~Vybiral.
\newblock Widths of embeddings in function spaces.
\newblock {\em J. Complexity}, 24(4):545--570, 2008.

\bibitem{WhHo87}
C.~D. White and R.~N. Horne.
\newblock Computing absolute transmissibility in the presence of finescale
  heterogeneity.
\newblock {\em SPE Symposium on Reservoir Simulation}, page 16011, 1987.

\bibitem{Wu:2002}
X.~H. Wu, Y.~Efendiev, and T.~Y. Hou.
\newblock Analysis of upscaling absolute permeability.
\newblock {\em Discrete Contin. Dyn. Syst. Ser. B}, 2(2):185--204, 2002.

\bibitem{Yurinski:1986}
V.~V. Yurinski{\u\i}.
\newblock Averaging of symmetric diffusion in a random medium.
\newblock {\em Sibirsk. Mat. Zh.}, 27(4):167--180, 215, 1986.

\end{thebibliography}

\end{document}